\documentclass[11pt]{amsart}

\usepackage{graphicx, amsthm, amsfonts, amscd,epstopdf} 
\usepackage[all]{xy}
\usepackage{hyperref}
\include{header}
 \usepackage{pdfsync} 
 \usepackage{color}

\def\lso{\sigma_0}
\def\sun{\sigma_0}

\newcommand{\rr}{\ensuremath{\mathbb{R}}}

 \theoremstyle{plain}
\newtheorem{thm}{Theorem}[section]
\newtheorem{cor}[thm]{Corollary}
\newtheorem{lem}[thm]{Lemma}

\newtheorem{prop}[thm]{Proposition}

\newtheorem{ques}[thm]{Question}

\theoremstyle{definition}
\newtheorem{defn}[thm]{Definition}

\theoremstyle{remark}
\newtheorem{rem}[thm]{Remark}
\newtheorem{ex}[thm]{Example}

\numberwithin{equation}{section}

\begin{document}

\title[The Surgery Unknotting Number of Legendrian Links ]{The Surgery Unknotting Number of Legendrian Links }

\author[A. B.  Boranda]{A. Bianca Boranda} \address{Bryn Mawr College,
  Bryn Mawr, PA 19010}

  \author[L. Traynor]{Lisa Traynor} \address{Bryn Mawr College,
  Bryn Mawr, PA  19010} \email{ltraynor@brynmawr.edu}

\author[S. Yan]{Shuning Yan} \address{Bryn Mawr College, Bryn
  Mawr, PA 19010}
  
   \thanks{LT is
partially supported by NSF grant DMS-0909021. ABB and 
 SY completed this work  as Bryn Mawr College undergraduates; they received support from NSF grant DMS-0909021
  to pursue this research.}

\begin{abstract}
 The surgery unknotting number
  of a Legendrian link   is defined as the minimal
  number of  particular oriented  surgeries that are required to convert
  the  link into a Legendrian unknot.   
  Lower bounds for the surgery unknotting number  are given in terms of classical
  invariants of the Legendrian link.
  The surgery unknotting number is calculated
  for every Legendrian link  that is topologically a twist knot or a torus link and for every 
  positive, Legendrian rational link. In addition, the surgery unknotting number is calculated for every Legendrian knot in the Legendrian knot atlas
  of Chongchitmate and Ng whose underlying smooth knot has crossing number $7$ or less. In all these calculations,  as long as the Legendrian link 
   of $j$ components  is not topologically
  a slice knot, its surgery unknotting number 
  is equal to the sum of $(j-1)$ and  twice  the smooth $4$-ball genus of the  underlying smooth link. 
  \end{abstract}

\maketitle


\section{Introduction}
\label{sec:intro}

A classical invariant for smooth knots is the 
unknotting number:   
the unknotting number of a diagram of
a knot $K$ is the minimum number of crossing changes required to change the diagram into a 
diagram of the unknot;  the unknotting number of $K$ is the minimum of the unknotting numbers of
all diagrams of $K$.  
In the following, we will define a sugery unknotting number for 
Legendrian knots and links.  

 Legendrian links are smooth links that satisfy
an additional geometric condition imposed by a contact structure.  We will focus on Legendrian links in $\mathbb R^3$ with
its standard contact structure.
The notion
of Legendrian equivalence is more refined than smooth equivalence: there is only one smooth unknot, but there
are an infinite number of Legendrian unknots.  Figure~\ref{fig:3-unknots} shows the front  projections
of three
different Legendrian unknots; the entire  infinite ``tree"  representing all  Legendrian unknots can be found in Figure~\ref{fig:unknots}.

The act of changing a crossing (smoothly passing a knot  through itself) is not a natural operation in a contact manifold.  Instead,
given a Legendrian link, we will attempt to arrive at a Legendrian unknot through a Legendrian ``surgery" operation in which
 two oppositely oriented strands in 
a Legendrian $0$-tangle are replaced by an oriented, Legendrian $\infty$-tangle as illustrated in  Figure~\ref{fig:0-surgery}.  It is shown
in  Proposition~\ref{prop:unknottable} that every Legendrian link can become a Legendrian unknot after a finite number of surgeries.
The surgery unknotting number of a Legendrian link $\Lambda$, $\lso(\Lambda)$, measures the minimal number of these surgeries that are required to convert
$\Lambda$ to a Legendrian unknot; see Definitions~\ref{defn:strings} and \ref{defn:sun}. 
 \begin{figure}
   \centerline{\includegraphics[height=1.3in]{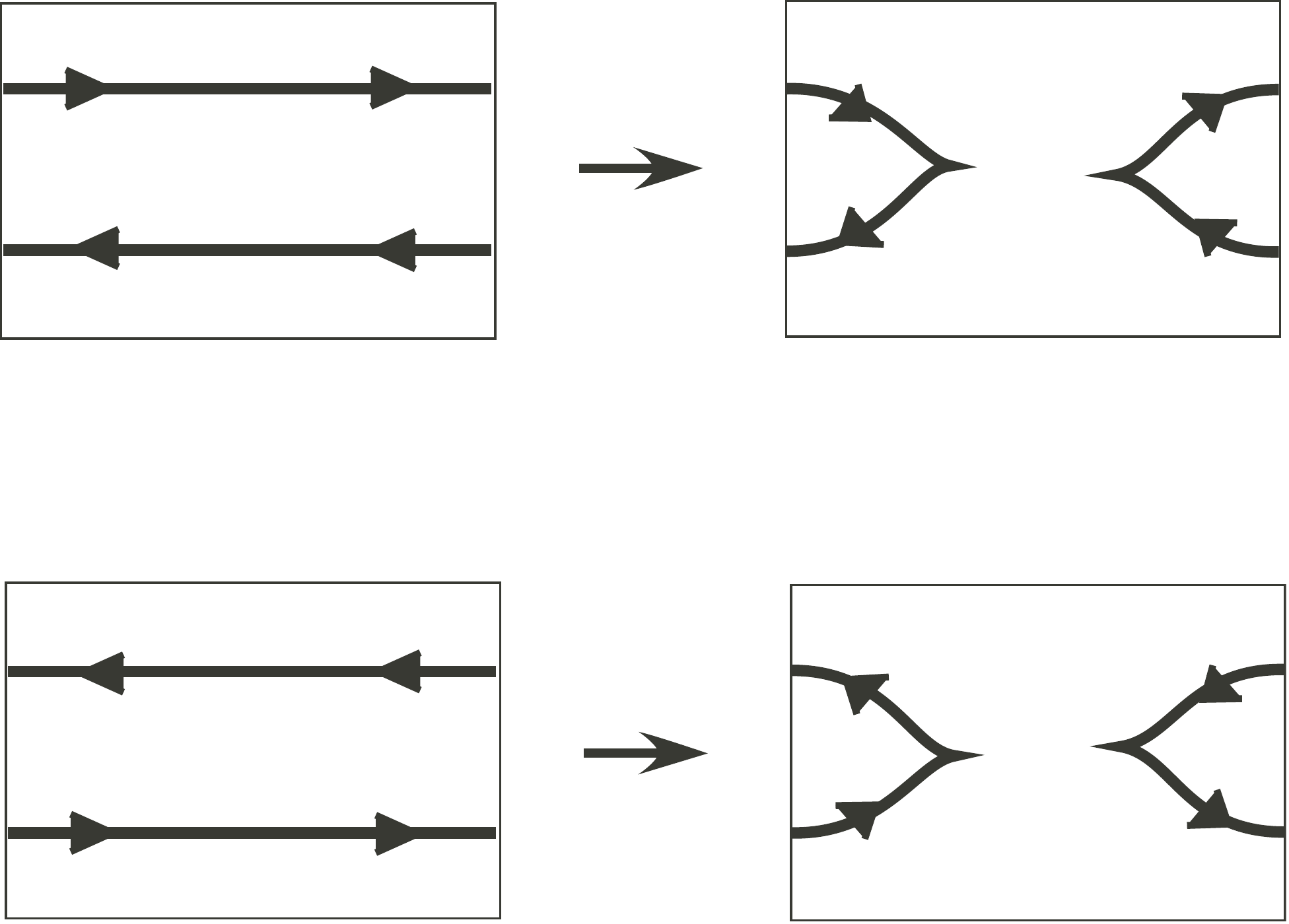}}
  \caption{Oriented Legendrian surgeries:  a basic, compatibly oriented $0$-tangle is replaced by a 
  basic, compatibly oriented $\infty$-tangle.  }
  \label{fig:0-surgery}
\end{figure}
In the following, our goal is to study and calculate this Legendrian invariant $\lso(\Lambda)$.

\subsection{Main Results}
 Lower bounds on $\lso(\Lambda)$ exist in terms of the classical invariants of $\Lambda$.  These invariants include
 invariants of the underlying smooth link type $L_\Lambda$
 and the classical Legendrian invariants of $\Lambda$: the Thurston-Bennequin, $tb(\Lambda)$, and rotation number, $r(\Lambda)$,
 as defined in Section~\ref{sec:background}.

 \begin{thm} \label{thm:lso-lb} Let $\Lambda$ be a Legendrian link.  Then:
  \begin{enumerate}
  \item $tb(\Lambda) + |r(\Lambda)| + 1 \leq \lso(\Lambda);$
  \item  if $\Lambda$ has $j$ components, $L_\Lambda$ 
  denotes the underlying smooth 
    link type of $\Lambda$, and $g_4(L_\Lambda)$ denotes the smooth $4$-ball genus of  $L_\Lambda$
    \footnote{$g_4(L_\Lambda)$ denotes the minimal genus of a smooth, compact, connected, oriented surface $\Sigma 
    \subset B^4$ with
    $\partial \Sigma = L_\Lambda \subset \mathbb R^3 \subset S^3 = \partial B^4$.},   
  then $$2 g_4(L_\Lambda) + (j-1) \leq \lso(\Lambda).$$
  \end{enumerate}
  \end{thm}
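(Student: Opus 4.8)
The plan is to reduce both inequalities to a single local computation describing how one surgery affects the front-projection invariants, combined with two classical inputs: the front formulas $tb = w - \tfrac{1}{2}c$ and $r = \tfrac{1}{2}(c_{\downarrow} - c_{\uparrow})$, where $w$ is the writhe, $c$ the total number of cusps, and $c_{\downarrow}, c_{\uparrow}$ the number of down- and up-cusps; and the Bennequin bound $tb(U) + |r(U)| \le -1$ valid for every Legendrian unknot $U$. The key lemma I would prove is that a single surgery, replacing a basic oriented $0$-tangle by a basic oriented $\infty$-tangle, changes the invariants by $\Delta tb = -1$ and $\Delta r = 0$. Indeed, the basic tangles are crossingless and the replacement alters the diagram only inside a disk in which the two affected strands are anti-parallel, so no crossing is created or destroyed and each crossing keeps its sign; hence $\Delta w = 0$. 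In the front the $0$-tangle is $x$-monotone (no cusps) while the $\infty$-tangle carries exactly two cusps, so $\Delta c = +2$ and $\Delta tb = -\tfrac12 \Delta c = -1$. Tracking orientations then shows the two new cusps are exactly one up-cusp and one down-cusp, forced by the anti-parallel orientation of the replaced strands, whence $\Delta r = \tfrac12(1-1)=0$.

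Granting this lemma, part (1) is a telescoping argument. I would fix a sequence of $n = \lso(\Lambda)$ surgeries carrying $\Lambda = \Lambda_0$ to a Legendrian unknot $U = \Lambda_n$. Applying the lemma at each step gives the exact relations $tb(U) = tb(\Lambda) - n$ and $r(U) = r(\Lambda)$. The classical bound $tb(U) + |r(U)| \le -1$ then reads $tb(\Lambda) - n + |r(\Lambda)| \le -1$, i.e. $\lso(\Lambda) = n \ge tb(\Lambda) + |r(\Lambda)| + 1$, which is the assertion. Note this uses only the weak unknot inequality, not the full slice--Bennequin estimate.

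For part (2), I would realize the same optimal sequence geometrically. Each surgery is an oriented saddle, that is, the attachment of a single band, so the trace $C \subset S^3 \times [0,1]$ from $L_\Lambda$ to the terminal unknot is an oriented cobordism obtained from $L_\Lambda \times [0,1]$ by attaching $n$ bands; hence $\chi(C) = -n$. Pushing $C$ into $B^4$ and capping the terminal unknot by a disk $D$ produces an oriented surface $\Sigma = C \cup D \subset B^4$ with $\partial \Sigma = L_\Lambda$ and $\chi(\Sigma) = 1 - n$. I would then verify that $\Sigma$ is connected: the height function on $C$ has only index-$1$ critical points, so no component of $C$ admits an interior maximum, whence every component reaches the single top circle; since that circle is connected, $C$, and therefore $\Sigma$, is connected. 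A connected oriented surface with $j$ boundary circles satisfies $\chi(\Sigma) = 2 - 2g(\Sigma) - j$, so $n = 2g(\Sigma) + (j-1) \ge 2g_4(L_\Lambda) + (j-1)$, using $g(\Sigma) \ge g_4(L_\Lambda)$ by minimality of the $4$-ball genus.

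The main obstacle is the orientation bookkeeping inside the local lemma: one must check carefully that the surgery creates precisely one up-cusp and one down-cusp while leaving the writhe unchanged, since this single computation is what simultaneously pins down $\Delta tb = -1$ and $\Delta r = 0$ and makes the telescoping exact. The only other point demanding attention is the connectedness of $\Sigma$, which is exactly what is needed to invoke the definition of $g_4(L_\Lambda)$ for links of several components.
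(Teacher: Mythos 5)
Your proposal is correct and follows essentially the same route as the paper: the local lemma that each oriented surgery gives $\Delta tb = -1$, $\Delta r = 0$ is the paper's Lemma~\ref{lem:parity+tb}, the telescoping argument combined with the bound $tb+|r|\leq -1$ for Legendrian unknots is the paper's Lemma~\ref{lem:tb-lb}, and the saddle-cobordism capped by a disk with the Euler characteristic count $\chi(\Sigma)=1-n=2-2g-j$ is the paper's Lemma~\ref{lem:g4-lb}. Your write-up actually supplies details the paper leaves implicit, notably the cusp orientation bookkeeping and the connectedness of the surface via the absence of interior maxima.
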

  
  \begin{rem}  \label{rem:sl-b} In parallel to Theorem~\ref{thm:lso-lb} (1), when $\Lambda$ is a 
  Legendrian knot with underlying smooth knot type $K_\Lambda$,    
  the well-known Slice-Bennequin Inequality
  says that:
  \begin{equation} \label{ineq:sl-b}
  tb(\Lambda) + |r(\Lambda)| + 1 \leq 2g_4(K_\Lambda).
  \end{equation}
  There are now a number of proofs of this result, but all use deep theory.  
  In  \cite{lisca-matic}, Lisca and Mati\'c prove this using  
  their adjunction inequality obtained by   Seiberg-Witten theory.   See also \cite{akbulut-matveyev}  and \cite{rudolph}. 
  In contrast, the proof of Theorem~\ref{thm:lso-lb} is elementary and is given in
   Lemmas~\ref{lem:tb-lb} and \ref{lem:g4-lb}.
   \end{rem}

When $\Lambda$ is a knot, combining Theorem~\ref{thm:lso-lb}(2) and the Slice-Bennequin Inequality~(\ref{ineq:sl-b}), we find:

 \begin{cor}   \label{cor:realize-g4}
  For any Legendrian knot  
  $\Lambda$,  if  $K_\Lambda$ denotes the smooth knot type of $\Lambda$ then   
  $$tb(\Lambda) + |r(\Lambda)| + 1 \leq 2g_4(K_\Lambda) \leq \lso(\Lambda).$$ 
  Thus when
  $\lso(\Lambda) = tb(\Lambda) + |r(\Lambda)| +  1$, $\lso(\Lambda) = 2g_4(K_\Lambda)$.
  \end{cor}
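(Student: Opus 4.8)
The plan is simply to concatenate the two inequalities already in hand and then read off the equality case by a squeezing argument. First I would specialize Theorem~\ref{thm:lso-lb}(2) to the situation where $\Lambda$ is a knot, i.e. $j = 1$. In that case the term $(j-1)$ vanishes and the bound becomes $2g_4(K_\Lambda) \leq \lso(\Lambda)$, where $K_\Lambda = L_\Lambda$ is the underlying smooth knot type.

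Next I would invoke the Slice-Bennequin Inequality~(\ref{ineq:sl-b}) recalled in Remark~\ref{rem:sl-b}, namely $tb(\Lambda) + |r(\Lambda)| + 1 \leq 2g_4(K_\Lambda)$. Chaining this with the inequality from the previous step immediately produces the displayed three-term chain
$$tb(\Lambda) + |r(\Lambda)| + 1 \leq 2g_4(K_\Lambda) \leq \lso(\Lambda).$$

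For the final assertion, I would assume $\lso(\Lambda) = tb(\Lambda) + |r(\Lambda)| + 1$. Then the leftmost and rightmost quantities in the chain coincide, so every inequality lying between them must in fact be an equality; in particular the middle term is pinched, forcing $2g_4(K_\Lambda) = \lso(\Lambda)$, which is exactly the claimed conclusion.

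Since both ingredients --- Theorem~\ref{thm:lso-lb}(2) and inequality~(\ref{ineq:sl-b}) --- are already available, there is no genuine obstacle: the corollary is a purely formal deduction, and I expect the ``hard part'' to be entirely absent at this stage. The only nontrivial input is the Slice-Bennequin Inequality itself, whose proof (as noted in Remark~\ref{rem:sl-b}) rests on deep gauge-theoretic machinery, but that is imported as a black box rather than reproved here.
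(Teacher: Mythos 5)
Your proposal is correct and matches the paper's own argument exactly: the corollary is stated immediately after the sentence ``When $\Lambda$ is a knot, combining Theorem~\ref{thm:lso-lb}(2) and the Slice-Bennequin Inequality~(\ref{ineq:sl-b}), we find,'' which is precisely your chain of the two inequalities (with $j=1$) followed by the squeeze for the equality case. Nothing is missing, and no further detail is needed.
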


As we will see below, this corollary sometimes allows us to  calculate the smooth $4$-ball genus of a  knot.  

Using the established lower bounds, we can calculate $\sun(\Lambda)$ when the underlying smooth link type of  $\Lambda$ falls within some
important families.

\begin{thm}  \label{thm:family-sum} 
  \begin{enumerate}
\item   If $\Lambda$ is a Legendrian knot that is topologically a non-trivial twist knot,
  then  $\lso(\Lambda) = 2$. 
 \item   If $\Lambda$ is a $j$-component Legendrian link that is topologically a $(jp,jq)$-torus link,  
 $|p| > q > 1$ and $\gcd(p,q) = 1$, then
  $$\lso(\Lambda) = (|jp|-1)(jq-1).$$
  \end{enumerate}
\end{thm}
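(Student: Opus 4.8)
The plan is to prove each equality by matching a lower bound with an upper bound, where the lower bounds come from the general estimates of Theorem~\ref{thm:lso-lb} (which already hold for \emph{every} Legendrian representative) together with one elementary parity observation, and the upper bounds come from explicit surgery sequences on standard fronts. For the torus-link lower bound in Part (2), I would first note that the underlying smooth link $T(jp,jq)$ is the closure of a positive braid, so by the sharpness of the Slice--Bennequin inequality for positive braids its smooth $4$-ball genus equals its Seifert genus. Computing the latter from Seifert's algorithm on the standard diagram---with $|jp|$ Seifert circles, $(|jp|-1)\,jq$ crossings, and $j=\gcd(jp,jq)$ boundary components---gives $2g_4(T(jp,jq)) + (j-1) = (|jp|-1)(jq-1)$. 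Feeding this into Theorem~\ref{thm:lso-lb}(2) yields $\lso(\Lambda) \geq (|jp|-1)(jq-1)$.

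For the twist-knot lower bound in Part (1), the genus bound alone is insufficient, since some twist knots (such as $6_1$) are topologically slice and have $g_4 = 0$. Here I would instead use a parity argument valid for all representatives: each surgery is an oriented saddle (band) move, so it changes the number of components by exactly $\pm 1$. Since a knot and a Legendrian unknot each have one component, any unknotting sequence for a knot has even length; as a non-trivial knot cannot be unknotted in zero moves, this forces $\lso(\Lambda) \geq 2$. (For the non-slice twist knots this is also consistent with $2g_4 + (j-1) = 2$.)

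For the upper bounds I would exhibit explicit surgery sequences on convenient Legendrian fronts. In the twist-knot case, I would surger one $0$-tangle in the clasp region to split off the twisting as a second component and then surger a second $0$-tangle to recombine, realizing a two-saddle cobordism from the twist knot to a Legendrian unknot, so $\lso(\Lambda) \leq 2$. In the torus-link case, the target count $(|jp|-1)(jq-1) = 2g_4 + (j-1)$ is precisely the number of bands in a connected, saddle-only cobordism from $T(jp,jq)$ to the unknot; I would realize such a cobordism as a staircase of surgeries on the grid-like Legendrian front of the torus link, each surgery resolving one block of the braid, and check that the terminal diagram is a Legendrian unknot, giving $\lso(\Lambda) \leq (|jp|-1)(jq-1)$.

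The main obstacle, and the bulk of the work, is twofold. First, the torus-link bookkeeping must be organized so that the prescribed $(|jp|-1)(jq-1)$ surgeries genuinely terminate at an unknot, with no wasted move and with the intermediate component count controlled throughout the staircase. Second, and more delicate, the upper bound must be promoted from a single standard front to \emph{every} Legendrian representative of the given smooth type, since $\lso$ is a Legendrian invariant of each $\Lambda$. For this I would show that the surgery operations, combined with Legendrian Reidemeister moves, allow an arbitrary representative to be brought into the standard form used above without exceeding the surgery budget---in particular, that stabilizations can be absorbed---so that the constructed upper bound is independent of the chosen representative. Combining the two bounds then yields the stated equalities.
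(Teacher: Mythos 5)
Your overall skeleton---lower bounds from Theorem~\ref{thm:lso-lb} plus parity, upper bounds from explicit surgeries on standard fronts, and stabilization monotonicity---is the paper's strategy, and your parity argument for twist knots is exactly Remark~\ref{rem:basics}. But there is a genuine gap at the step you yourself flag as delicate: promoting the upper bound from one standard front to \emph{every} Legendrian representative. You propose to show that ``Legendrian Reidemeister moves \dots allow an arbitrary representative to be brought into the standard form \dots in particular, that stabilizations can be absorbed.'' This is not something you can establish by manipulating fronts: it is precisely the content of the deep classification theorems the paper invokes, namely Etnyre--Honda \cite{etnyre-honda:knots} for torus knots, Dalton \cite{dalton} for torus links, and Etnyre--Ng--V\'ertesi \cite{etnyre-ng-vertesi} for twist knots, which assert that every Legendrian representative of these smooth types is obtained by stabilizations from an explicit finite list of maximal Thurston--Bennequin representatives. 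In general a Legendrian knot is \emph{not} determined by its smooth type and classical invariants (Chekanov's examples), so no elementary moves-based reduction exists; these knot types are special, and proving that requires convex surface theory. Note also that the Fuchs--Tabachnikov common-stabilization theorem cannot substitute here, because the monotonicity $\lso(S_\pm(\Lambda)) \leq \lso(\Lambda)$ runs the wrong way: you need the arbitrary representative $\Lambda$ to \emph{be} a stabilization of a standard form, not merely to become isotopic to one after further stabilization. Without citing the classification results, your upper bound is proved only for the standard fronts.

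Two smaller issues in the torus-link case. First, your lower bound asserts that $T(jp,jq)$ is the closure of a positive braid; for $p<0$ it is a negative braid closure, so you must pass to the mirror and use the mirror invariance of $g_4$ (the paper instead uses its own Corollary~\ref{cor:realize-g4} applied to the positive case, together with Nakamura's formula for links). Second, your ``staircase'' surgery scheme describes the maximal-$tb$ front of a \emph{positive} torus link; the maximal-$tb$ representatives of negative torus links look entirely different (the paper's Figure~\ref{fig:neg-torus-unknot}), there are $2m$ of them when $|p| = mq+e$, and the surgery count $(1+n_1+n_2)q(q-1)+(e-1)(q-1) = (|p|-1)(q-1)$ must be verified on each of those fronts separately. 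So the negative case needs its own explicit construction, which your sketch does not supply.
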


 Theorem~\ref{thm:family-sum} is proved in Section~\ref{sec:families} as Theorems~\ref{thm:twist} and \ref{thm:torus}.
 The proof of this theorem relies heavily on the classification of Legendrian twist knots given by Etnyre, Ng, and V\'ertesi, \cite{etnyre-ng-vertesi},
and the classification of Legendrian torus knots by Etnyre and Honda, \cite{etnyre-honda:knots}, which was extended to a classification of
  Legendrian torus links by Dalton, \cite{dalton}.   When $\Lambda$ is topologically a positive torus link, $p > 0$, of maximal Thurston-Bennequin invariant, 
  the calculation of $\lso(\Lambda)$ is obtained realizing the lower bound given in Theorem~\ref{thm:lso-lb} by the Legendrian
  invariants of $\Lambda$.
   Thus by Corollary~\ref{cor:realize-g4}, we are able to 
 to reprove the Milnor conjecture about torus knots, originally proved by Kronheimer and Mrowka:

\begin{cor}[\cite{kronheimer-mrowka}] \label{cor:Milnor}  If $T(p,q)$ is a $(p,q)$-torus knot, $|p| > q > 1$, then 
$$2g_4(T(p,q)) = (|p| - 1)(q-1).$$
 \end{cor}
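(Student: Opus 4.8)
The plan is to deduce the corollary from the computation of $\lso$ for torus links in Theorem~\ref{thm:family-sum}(2) together with the realization principle of Corollary~\ref{cor:realize-g4}. First I would reduce to the case $p > 0$. Since $T(p,q)$ with $p < 0$ is the mirror of $T(|p|,q)$, and the smooth $4$-ball genus is unchanged under mirroring (a slice surface for $K$ in $B^4$ reflects to a slice surface for its mirror), it suffices to establish $2g_4(T(p,q)) = (p-1)(q-1)$ for positive torus knots $T(p,q)$ with $p > q > 1$ and $\gcd(p,q) = 1$; the general formula with $|p|$ then follows.

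Next I would produce a Legendrian representative for which the Thurston--Bennequin lower bound is sharp. Specializing Theorem~\ref{thm:family-sum}(2) to $j = 1$ gives, for every Legendrian representative $\Lambda$ of $T(p,q)$,
$$\lso(\Lambda) = (p-1)(q-1).$$
By the Etnyre--Honda classification, $T(p,q)$ admits a Legendrian representative $\Lambda_0$ of maximal Thurston--Bennequin invariant $tb(\Lambda_0) = pq - p - q$ with rotation number $r(\Lambda_0) = 0$. For this representative the elementary identity $(pq - p - q) + 0 + 1 = (p-1)(q-1)$ shows that
$$tb(\Lambda_0) + |r(\Lambda_0)| + 1 = \lso(\Lambda_0),$$
so the lower bound of Theorem~\ref{thm:lso-lb}(1) is attained.

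Finally, I would invoke Corollary~\ref{cor:realize-g4}. Because $\lso(\Lambda_0) = tb(\Lambda_0) + |r(\Lambda_0)| + 1$, that corollary forces $\lso(\Lambda_0) = 2g_4(T(p,q))$, and combining with the value of $\lso(\Lambda_0)$ computed above yields $2g_4(T(p,q)) = (p-1)(q-1)$. This settles the positive case, and with the mirror reduction it completes the corollary.

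I expect the genuine mathematical content to lie entirely in the inputs rather than in this deduction: the hard work is hidden in Theorem~\ref{thm:family-sum}(2), whose proof rests on the Legendrian torus-link classification of Etnyre--Honda and Dalton, and in the Slice--Bennequin inequality underlying Corollary~\ref{cor:realize-g4}. Within the present argument, the only point requiring care is the selection of the correct Legendrian representative: one must confirm from the classification both the maximal value $tb = pq - p - q$ and the existence of a maximal-$tb$ representative with vanishing rotation number, since it is precisely the coincidence of the two lower bounds in Theorem~\ref{thm:lso-lb} for such a representative that forces the surgery unknotting number to equal $2g_4$.
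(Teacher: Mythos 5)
Your proof is correct and follows essentially the same route as the paper: the paper likewise establishes the positive case by realizing the lower bound $tb(\Lambda_0)+|r(\Lambda_0)|+1=(p-1)(q-1)$ for the maximal-$tb$ Etnyre--Honda representative and invoking Corollary~\ref{cor:realize-g4}, then handles $p<0$ by mirror invariance of $g_4$. Your decision to reduce to $p>0$ \emph{before} citing Theorem~\ref{thm:family-sum}(2) is exactly the right precaution and matches the paper's logical order, since the paper's proof of that theorem in the negative-torus-knot case itself relies on this corollary, so quoting it for $p<0$ would be circular.
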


 By comparing $\sigma_0$ of the Legendrian and $g_4$ of the underlying smooth link type, 
 we can rephrase the conclusions of Theorem~\ref{thm:family-sum} as:
 
\begin{cor}\label{cor:twist-torus}  If $\Lambda$ is a Legendrian link that is topologically  a non-slice twist knot
\footnote{Casson and Gordon proved that  the only twist knots that are slice are the unknot, $6_1$, and $m(6_1)$; \cite{casson-gordon}. }  
or a $j$-component torus link, $L_\Lambda$, 
then $$\lso(\Lambda) = 2g_4(L_\Lambda) + (j-1).$$
\end{cor}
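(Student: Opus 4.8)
The plan is to recognize that this corollary is a numerical repackaging of Theorem~\ref{thm:family-sum} together with the genus computations for the two families. Note first that for both families Theorem~\ref{thm:family-sum} shows $\lso(\Lambda)$ takes the same value on every Legendrian representative, while $g_4(L_\Lambda)$ depends only on the smooth type; hence the asserted identity $\lso(\Lambda) = 2g_4(L_\Lambda) + (j-1)$ is well-posed, and it suffices to verify the arithmetic case by case for the value of $\lso(\Lambda)$ already determined.

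For the twist knot case we have $j=1$, and by Theorem~\ref{thm:family-sum}(1), $\lso(\Lambda) = 2$ (a non-slice twist knot is in particular non-trivial, so part~(1) applies). I would then recall that every twist knot bounds a genus-one Seifert surface, so that $g_4(L_\Lambda) \le 1$; since $L_\Lambda$ is assumed non-slice, $g_4(L_\Lambda) \ge 1$, forcing $g_4(L_\Lambda) = 1$. Thus $2g_4(L_\Lambda) + (j-1) = 2 = \lso(\Lambda)$, as claimed.

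For the torus link case, Theorem~\ref{thm:family-sum}(2) gives $\lso(\Lambda) = (|jp|-1)(jq-1)$, and $\gcd(jp,jq)=j$ matches the component count. Here I would run a sandwich argument parallel to the re-proof of the Milnor conjecture in Corollary~\ref{cor:Milnor}. Choose the maximal Thurston--Bennequin positive Legendrian representative $\Lambda_0$; as used in the proof of Theorem~\ref{thm:torus}, its classical invariants realize the lower bound, so $tb(\Lambda_0) + |r(\Lambda_0)| + 1 = (|jp|-1)(jq-1)$. The slice--Bennequin inequality for links (in the connected-surface normalization $tb + |r| + 1 \le 2g_4 + (j-1)$) gives $tb(\Lambda_0) + |r(\Lambda_0)| + 1 \le 2g_4(L_\Lambda) + (j-1)$, while Theorem~\ref{thm:lso-lb}(2) gives $2g_4(L_\Lambda) + (j-1) \le \lso(\Lambda_0) = (|jp|-1)(jq-1)$. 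The two bounds pinch, yielding $2g_4(L_\Lambda) + (j-1) = (|jp|-1)(jq-1) = \lso(\Lambda)$.

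The main obstacle is invoking the correct form of the slice--Bennequin inequality for links, which is the one genuinely deep input; equivalently, one may simply cite \cite{kronheimer-mrowka} to obtain the slice genus of the torus link directly, since $2g_4(L_\Lambda)+(j-1)$ equals the maximal Euler-characteristic deficit of the Milnor fiber, namely $(|jp|-1)(jq-1)$. Everything else is bookkeeping: confirming the genus-one statement for twist knots and tracking the component count through $\gcd(jp,jq)=j$.
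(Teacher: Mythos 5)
Your proposal is correct, and for the twist-knot case and the torus-knot case ($j=1$) it is essentially the paper's argument: the paper leaves the twist-knot genus computation implicit (genus-one Seifert surface plus non-sliceness forces $g_4=1$, exactly as you say), and for positive torus knots the sandwich you run is precisely Corollary~\ref{cor:realize-g4}. Where you genuinely diverge from the paper is the multi-component case $j\geq 2$. The paper does \emph{not} invoke a slice--Bennequin inequality for links (it only ever states the knot version); instead it computes $2g_4(T(jp,jq))+(j-1)=(j|p|-1)(jq-1)$ combinatorially from Nakamura's formula for the $4$-ball genus of positive links, counting Seifert circles and crossings in the standard diagram (Remark~\ref{rem:g4-torus-links}, cited inside the proof of Theorem~\ref{thm:torus}). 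Your route instead imports the link version of slice--Bennequin in the connected-surface normalization $tb+|r|+1\leq 2g_4+(j-1)$, or equivalently the Milnor-fiber genus via \cite{kronheimer-mrowka}; this is a legitimate and correct alternative, and it has the virtue of unifying the knot and link cases under one sandwich, but it requires a deep input that the paper never states for links (and whose normalization --- connected surface versus maximal Euler characteristic --- needs the care you gave it), whereas Nakamura's formula is an explicitly citable combinatorial identity. One small elision on your side: when $p<0$ your maximal-$tb$ \emph{positive} representative $\Lambda_0$ represents the mirror of $L_\Lambda$, so the pinch computes $g_4$ of the mirror link, and you must add the (standard, but needed) observation that $g_4$ is invariant under mirroring --- the paper says this explicitly in the proof of Theorem~\ref{thm:torus}; as written, your conclusion ``$2g_4(L_\Lambda)+(j-1)=(|jp|-1)(jq-1)$'' silently conflates the two.
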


 As an additional family of Legendrian links, we consider ``positive, Legendrian rational links".  These links are defined as
 Legendrian numerator closures of the Legendrian rational tangles
 studied, for example, by the second author, \cite{traynor:strat}, and Schneider, \cite{schneider}.  These links  are positive in the sense that 
  an orientation is chosen on the components so that all the crossings have a 
 positive sign. Such Legendrian links are specified by a vector $(c_n, \dots, c_1)$ of positive
 integers; see Definition~\ref{defn:pos-rat} and Figure~\ref{fig:rat-gen}.
  Lemma~\ref{lem:pos} gives conditions on the $c_i$
 that guarantee that the link is positive.

   \begin{thm}  \label{thm:rat-sum} 
    If $\Lambda(c_n, \dots, c_2, c_1)$  is a positive, Legendrian rational link, then
$$\lso(\Lambda(c_n, \dots, c_2, c_1)) = \sum_{i \text{ odd }} c_{i}  - p(n),$$
where $p(n) $ equals  $1$  when $n$ is odd and equals $0$ when $n$ is even. 
   \end{thm}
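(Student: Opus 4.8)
The plan is to prove the identity by sandwiching $\lso(\Lambda(c_n,\dots,c_1))$ between matching lower and upper bounds, each equal to $\sum_{i\text{ odd}} c_i - p(n)$. The lower bound will come from the classical Legendrian invariants via Theorem~\ref{thm:lso-lb}(1), reducing to a diagrammatic calculation on the standard front of the link; the upper bound will come from an explicit sequence of oriented surgeries, organized by induction on $n$.

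\textbf{Lower bound.} I would invoke Theorem~\ref{thm:lso-lb}(1), $tb(\Lambda)+|r(\Lambda)|+1 \leq \lso(\Lambda)$, and compute the right-hand side directly from the front of $\Lambda(c_n,\dots,c_1)$ described in Definition~\ref{defn:pos-rat} and Figure~\ref{fig:rat-gen}. Writing $tb = w - \tfrac12 c$ in terms of the writhe $w$ and cusp number $c$, and $r = \tfrac12(d-u)$ in terms of the down and up cusps, the computation hinges on the positivity hypothesis (Lemma~\ref{lem:pos}): since every crossing is positive, $w$ is the total crossing number, and the two families of twist regions indexed by the parity of $i$ contribute to $w$ and to the cusp count in complementary ways. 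The goal of this step is to show that after cancellation $tb(\Lambda)+|r(\Lambda)|+1$ collapses precisely to $\sum_{i\text{ odd}} c_i - p(n)$, with the parity term $p(n)$ emerging from how the outermost twist region is closed off in the numerator closure.

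\textbf{Upper bound.} I would exhibit a sequence of admissible surgeries of the type in Figure~\ref{fig:0-surgery} converting $\Lambda(c_n,\dots,c_1)$ into a Legendrian unknot, arguing by induction on $n$. In the numerator closure the odd-indexed twist regions present oppositely oriented strand pairs sitting in Legendrian $0$-tangles, so each of their crossings is the site of exactly one admissible surgery; performing these resolves such a region at a cost of one surgery per crossing, giving the $\sum_{i\text{ odd}} c_i$ term. The even-indexed twist regions, by contrast, should cost nothing: once the adjacent odd regions have been cleared, the even regions become unknotted and are removable by Legendrian isotopy. The inductive step reduces $(c_n,\dots,c_1)$ either to a shorter vector or to one with $c_n$ decreased, using the standard rational-tangle simplifications, until one reaches a basic configuration; checking that this base case is already a Legendrian unknot (rather than requiring one more surgery) produces the correction $-p(n)$.

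\textbf{Main obstacle.} I expect the difficulty to lie in the upper-bound construction and its bookkeeping rather than in the invariant computation. At each stage one must verify that the two strands bounding the chosen $0$-tangle are genuinely oppositely oriented, so that the surgery is admissible and orientation-compatible in the sense of Figure~\ref{fig:0-surgery}; that the even-indexed regions truly become trivial and contribute nothing after the odd regions are resolved; and, above all, that the parity accounting responsible for $p(n)$ is correct, namely that one surgery is saved exactly when $n$ is odd because the terminal configuration is already an unknot. Once both bounds are shown to agree for every admissible vector, the equality follows; combined with Theorem~\ref{thm:lso-lb}(2) and, in the knot case, Corollary~\ref{cor:realize-g4}, this simultaneously realizes $\lso$ as $2g_4(L_\Lambda)+(j-1)$.
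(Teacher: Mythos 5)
Your overall skeleton (lower bound from the classical invariants via Lemma~\ref{lem:tb-lb}, upper bound from an explicit surgery sequence) is exactly the paper's, and your lower-bound step is sound: one shows $r(\Lambda(c_n,\dots,c_1))=0$ because the up and down cusps of a positive front cancel in pairs, and $tb(\Lambda(c_n,\dots,c_1))=\sum_{i\text{ odd}}c_i-1-p(n)$ because all $\sum_i c_i$ crossings are positive and the number of right cusps exceeds the number of vertical crossings $\sum_{i\text{ even}}c_i$ by $1+p(n)$. The gap is in your upper-bound construction, and it is genuine. An oriented surgery performed next to a crossing of an anti-parallel twist region, followed by Legendrian isotopy, is the oriented resolution of that crossing; if you resolve two \emph{adjacent} crossings of the same horizontal region, the two strand segments between them close up into a disjoint unknotted circle. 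So your recipe of ``one surgery per crossing'' in a horizontal region with $c_i\geq 2$ crossings does not clear the region at cost $c_i$: it splits off $c_i-1$ extra components, and no Legendrian isotopy can ever remove a split component. Your single global correction $-p(n)$ cannot repair this, since the overcounting happens once per horizontal region. Concretely, for $\Lambda(3,2)$ (topologically $5_2$, where $\lso=2$): your recipe performs both surgeries at the two horizontal crossings and none in the vertical region, and the result is a $3$-component link --- a split circle together with two curves that are still linked through the vertical $3$-twist region --- not a Legendrian unknot. This also shows your second claim, that the even-indexed regions ``become unknotted and are removable by Legendrian isotopy'' once the odd regions are cleared, is false: the vertical twists produce genuine linking that isotopy cannot undo.

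The paper's bookkeeping is different and is what makes the count close up: one performs only $c_i-1$ surgeries in each horizontal region (each surgery plus isotopy shortens the twist region by one crossing, exactly as in the torus-knot pattern of Figure~\ref{fig:5-3-torus-unknot}, never resolving the last crossing), and then exactly \emph{one} surgery in each vertical region. The total is
$$\sum_{i\text{ odd}}(c_i-1)+\#\{\text{vertical regions}\}=\sum_{i\text{ odd}}c_i-p(n),$$
because the number of vertical regions equals the number of horizontal regions minus $p(n)$. In other words, the $-p(n)$ in the formula is not a single saving at the end of an induction; it is the aggregate of a $-1$ for every horizontal region and a $+1$ for every vertical region. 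Your total happens to match the theorem (it must), but the surgery string you describe does not exist, so the inequality $\lso\leq\sum_{i\text{ odd}}c_i-p(n)$ is not established by your argument.
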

   
   This is  proved  in Section~\ref{sec:families}; see Theorem~\ref{thm:pos-rat}.
   
  \begin{rem} When $\Lambda$ is  a positive, Legendrian rational link,  
  the calculation of $\lso(\Lambda)$ is obtained realizing the lower bound given in Theorem~\ref{thm:lso-lb}  given
  by the classical Legendrian invariants of $\Lambda$.  
  Thus by Corollary~\ref{cor:realize-g4}, when 
  $\Lambda(c_n, \dots, c_1)$  is a positive, Legendrian rational knot, 
  Theorem~\ref{thm:rat-sum} gives a formula for twice the smooth $4$-ball genus of the underlying smooth knot.  
    This can be used to get formulas for the smooth $4$-ball genus of a knot in terms of its rational
notation.    In particular,
$$\begin{aligned}
&g_4(5_2) = g_4(N(3,2)) = \frac12 \sigma_0(\Lambda(3,2)) = \frac12 (2) = 1, \\
&g_4(7_5) = g_4(N(3,2,2)) = \frac12 \sigma_0(\Lambda(3,2,2)) = \frac12 (2 + 3 -1) = 2, \\
&g_4(N(5, 244, 4, 16, 3, 104, 2, 12, 1)) =   \frac12(1 + 2 + 3 + 4 + 5-1).
\end{aligned}$$
This is an alternate to formulas for calculating the smooth $4$-ball genus in terms of crossings and Seifert circles
as given by Nakamura in \cite{nakamura}.  In turn, using Nakamura's formula, we see that  when the underlying link type
of $\Lambda(c_n, \dots, c_2, c_1)$ is
a $2$-component link $L_\Lambda$, $$\lso(\Lambda(c_n, \dots, c_2, c_1)) = 2 g_4(L_\Lambda)+ 1;$$
see Remark~\ref{rem:g4-rats}.
\end{rem}

Given the above calculations, it is natural to ask:

  \begin{ques} \label{ques:sun-g4} If $\Lambda$ is a Legendrian knot that is topologically a non-slice knot $K_\Lambda$, is
 $\sigma_0 (\Lambda) = 2 g_4(K_\Lambda)?$ 
 More generally, if $\Lambda$ is a Legendrian link of $j \geq 2$ components that is topologically the link $L_\Lambda$,
  is
 $\sigma_0 (\Lambda) = 2 g_4(L_\Lambda) + (j-1)?$ 
 \end{ques}
 
 To investigate the knot portion of this question, we examined Legendrian representatives of  knots with crossing
 number $7$ or less.     There is not yet
  a Legendrian classification of  all  these knot types, but a conjectured
 classification  is given by Chongchitmate and Ng in \cite{chongchitmate-ng}.

  \begin{prop}\label{prop:small-cross} Assuming the conjectured classification of Legendrian knots in \cite{chongchitmate-ng} \footnote{Potential duplications
 in their atlas will not affect the statement.}, if $\Lambda$ is a Legendrian knot that is topologically a
 non-slice knot $K_\Lambda$ with crossing number $7$ or
 less, 
 $\sigma_0(\Lambda) = 2 g_4 (K_\Lambda).$
 \end{prop}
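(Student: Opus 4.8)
The plan is to bound $\sigma_0(\Lambda)$ on both sides and to use stabilization to collapse the infinitely many Legendrian representatives of $K_\Lambda$ down to the finite list recorded in the atlas of \cite{chongchitmate-ng}. The lower bound is immediate: by Theorem~\ref{thm:lso-lb}(2) with $j = 1$, every Legendrian $\Lambda$ with underlying type $K_\Lambda$ satisfies $\sigma_0(\Lambda) \ge 2g_4(K_\Lambda)$. Since the values $g_4(K_\Lambda)$ for knots of crossing number at most $7$ are tabulated, it remains only to establish the upper bound $\sigma_0(\Lambda) \le 2g_4(K_\Lambda)$ for every such $\Lambda$, non-slice being exactly the regime in which $2g_4$ is the correct target.

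The first step I would establish is a monotonicity observation: if $\Lambda'$ is any stabilization of $\Lambda$, then $\sigma_0(\Lambda') \le \sigma_0(\Lambda)$. A stabilization inserts a zig-zag in a small disk of the front, and any surgery sequence realizing $\sigma_0(\Lambda)$ can be isotoped so that every surgery is supported in a $0$-tangle disjoint from that disk; performing the identical surgeries on $\Lambda'$ carries the zig-zag along and terminates at a (possibly more stabilized) Legendrian unknot, so the same number of surgeries suffices. Granting the conjectured classification of \cite{chongchitmate-ng}, every Legendrian representative of $K_\Lambda$ is a stabilization of one of the finitely many non-destabilizable representatives (the peaks of its mountain ranges) recorded in the atlas; by monotonicity it therefore suffices to prove the upper bound on this finite list of peaks.

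The core of the argument is the explicit upper bound at each peak: for every non-destabilizable representative $\Lambda_0$ of every non-slice knot of crossing number at most $7$, I would exhibit a sequence of exactly $2g_4(K_{\Lambda_0})$ oriented Legendrian surgeries carrying $\Lambda_0$ to a Legendrian unknot. Many peaks come for free from the computations already in hand: when $K_{\Lambda_0}$ is a twist knot, a torus knot, or a positive rational knot, Theorems~\ref{thm:family-sum} and \ref{thm:rat-sum} apply directly. For the residual cases I would work from the atlas front diagrams, reading a genus-$g_4$ slice surface as a movie of oriented saddles and matching each saddle to one of the permitted $0$-to-$\infty$ tangle replacements. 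Where the peak already attains the Slice-Bennequin bound $tb(\Lambda_0) + |r(\Lambda_0)| + 1 = 2g_4(K_{\Lambda_0})$, Corollary~\ref{cor:realize-g4} lets me target the count $tb + |r| + 1$ instead, so it is enough to find a surgery sequence of that shorter length; for peaks at which the bound is strict, a length-$2g_4$ movie must be produced outright.

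Combining the three steps, for any $\Lambda$ of type $K_\Lambda$ one obtains $2g_4(K_\Lambda) \le \sigma_0(\Lambda) \le \sigma_0(\Lambda_0) \le 2g_4(K_\Lambda)$, where $\Lambda_0$ is the peak of which $\Lambda$ is a stabilization, forcing equality throughout. The main obstacle is the last paragraph's case analysis: the atlas contains a substantial number of peaks, the conjectural nature of the classification must be respected (the footnote's potential duplications are harmless, since we use only upper bounds), and for the sporadic knots outside the torus, twist, and rational families one must produce a concrete, correctly oriented surgery movie realizing the $4$-ball genus. Verifying that such a movie of length exactly $2g_4$ exists for each remaining peak, rather than merely a longer one, is where the real effort lies.
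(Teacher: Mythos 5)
Your skeleton---the lower bound from Theorem~\ref{thm:lso-lb}(2), monotonicity of $\sigma_0$ under stabilization (the paper's Remark~\ref{rem:basics}(4)), reduction to the finitely many maximal-$tb$ representatives in the atlas of \cite{chongchitmate-ng}, and quoting Theorems~\ref{thm:family-sum} and~\ref{thm:rat-sum} for the torus, twist, and rational cases---is exactly the paper's framework. The gap is in the step that carries essentially all of the content: for the residual knots you propose to ``read a genus-$g_4$ slice surface as a movie of oriented saddles and match each saddle to one of the permitted $0$-to-$\infty$ tangle replacements.'' There is no justification that this can be done. A saddle of a smooth slice surface need not appear, after any isotopy, as a basic, compatibly-oriented Legendrian $0$-tangle replacement; whether \emph{every} Legendrian knot admits a surgery/isotopy movie realizing $g_4$ is precisely the paper's open Question~\ref{ques:lag-g4}, and any surface built from Legendrian isotopy and oriented Legendrian surgeries is automatically ribbon, so your method would in particular require the slice and ribbon genera to coincide for these knots. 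This is most acute exactly for the knots your other tools do not cover: $6_2$, $m(6_2)$, $6_3$, $7_6$, $m(7_6)$, $7_7$, $m(7_7)$ all have $g_4 < g_3$, so no Seifert-surface-style construction is available and the genus-$g_4$ surface is not one you can plausibly ``read off'' a front.

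What the paper actually does for these cases is different in kind. Since these knots have topological unknotting number one, it proves two local lemmas: two oriented Legendrian surgeries can simulate a crossing change at any \emph{negative} crossing of a front (Lemma~\ref{lem:neg-cross-unknot}), and two surgeries can simulate a crossing change at a \emph{positive} crossing provided it sits in the special S, Z, or hooked-X form (Lemma~\ref{lem:pos-cross-unknot}). For $6_2$, $6_3$, $7_6$, $7_7$ one finds maximal-$tb$ fronts unknottable at a negative crossing, giving $\sigma_0 \leq 2 = 2g_4$. The subtle point your proposal has no mechanism to detect: for $m(6_2)$, $m(7_6)$, $m(7_7)$, results of \cite{signed-unknot} show that \emph{no} diagram can be unknotted at a negative crossing, so the paper must instead exhibit fronts whose unknotting crossing is positive and in S or hooked-X form. (The knots $7_3$, $m(7_3)$, $7_4$, $m(7_4)$, $7_5$, $m(7_5)$, with $g_3 = g_4$, are handled by explicit surgery diagrams on the atlas peaks.) To close your gap you would need these crossing-change lemmas, or some equivalent device, rather than an appeal to realizing a slice surface by Legendrian surgeries.
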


 The only  non-torus and non-twist
 knots with crossing number at most $7$ are $6_2, m(6_2)$, $6_3 = m(6_3)$, $7_3$, $m(7_3)$, $7_4$, $m(7_4)$, $7_5$, $m(7_5)$,
 $7_6$, $m(7_6)$, $7_7$, and $m(7_7)$. 
 While doing the calculations for Legendrians with these knot types, in general we found that for a Legendrian $\Lambda$ 
 whose underlying smooth knot type $K_\Lambda$ 
 satisfied $g_3(K_\Lambda) = g_4(K_\Lambda)$, where $g_3(K_\Lambda)$ denotes the ($3$-dimensional)
 genus of the knot, 
  it is fairly straight forward to show that 
 $\sigma_0(\Lambda) = 2 g_4 (K_\Lambda)$.   Legendrians that are topologically $7_3, m(7_3), 7_4, m(7_4), 7_5$, and $m(7_5)$ 
 fall into this category.   
 For  the remaining knot types under consideration, the calculation of the smooth $4$-ball genus follows from
 the fact that the topological unknotting number of these knots is equal to $1$.  We show
 that in a front projection of a Legendrian knot, it is possible to locally change 
 any negative crossing to a positive one by $2$ surgeries; see Lemma~\ref{lem:neg-cross-unknot}.  This allowed us to prove 
 Proposition~\ref{prop:small-cross}  in the cases where $\Lambda$ is topologically $6_2, 6_3 = m(6_3), 7_6$, or $7_7$.
 For the remaining cases of $m(6_2)$, $m(7_6)$, and $m(7_7)$, results of \cite{signed-unknot} show that it is not possible to find
 a front projection that can be 
  unknotted at a negative crossing.  However, we found front projections that could
 be unknotted at a positive crossing in a special ``S" or ``hooked-X" form: a positive crossing in
 one of these special forms can be locally changed to a negative crossing by $2$ surgeries; see Lemma~\ref{lem:pos-cross-unknot}.

 \subsection{The Lagrangian Motivation and Discussion}   
 All of our calculations indicate that $\sun(\Lambda)$ is measuring an invariant of the underlying
 smooth link type and that this invariant will be the same for $\Lambda$ and $\Lambda'$ when they represent
 smooth knots that differ by the topological mirror operation.  Below is an explanation for why this may be true.   
 
 Although the definition of the surgery unknotting number has been formulated above combinatorially,
 the motivation comes from trying to understand the flexibility and rigidity of Lagrangian submanifolds of
 a symplectic manifold.  From theory developed by  Bourgeois, Sabloff, and the second author in
 \cite{bourgeois-sabloff-traynor}, 
 the existence
 of an unknotting surgery  string $\left( \Lambda_n, \dots, \Lambda_0\right)$, as defined in Definition~
\ref{defn:strings}, implies the existence of an oriented Lagrangian
 cobordism $\Sigma$ in $\rr \times \rr^3 = \{ (s, x, y, z) \} \cap \{ 0 \leq s \leq n \}$ so that $(\Sigma \cap \{ s = i \} ) = \Lambda_i$, for $i = n, \dots, 0$.  
 Furthermore,  if $\Lambda_0$ is the Legendrian unknot with maximal Thurston-Bennequin invariant,
 this cobordism can be ``filled in" with a Lagrangian $\overline \Sigma \subset \{ s \leq n\}$ so that  $\partial \overline \Sigma = \Lambda_n$.
 In fact, it is shown  by Chantraine in \cite{chantraine} that if $\Lambda_0$ is not the Legendrian unknot with maximal Thurston-Bennequin
 invariant, then the cobordism $\Sigma$ cannot be filled in to $\overline \Sigma$; moreover, when there does exist the filling
 to  $\overline \Sigma$
 and the smooth underlying knot type of $\Lambda_n$ is $K_n$,  then the genus of $\overline \Sigma$  agrees with the smooth $4$-ball genus
 of $K_n$.  
 
 From this Lagrangian perspective, it is a bit more natural to consider  surgery strings $(\Lambda_n, \dots, \Lambda_0)$
 where $\Lambda_0$ is a Legendrian unlink (a trivial link of Legendrian unknots), and define a corresponding  ``surgery unlinking number"; this is a project
 that the second author has begun to pursue with other undergraduates.    
 A Lagrangian analogue of Question~\ref{ques:sun-g4} is:
 
  \begin{ques} \label{ques:lag-g4} If $\Lambda$ is a Legendrian knot with underlying smooth knot type  
   $K_\Lambda$, 
  does there exist a Lagrangian cobordism constructed from Legendrian isotopy and oriented Legendrian surgeries
  between $\Lambda$ and $\Lambda_0$, a Legendrian that is a smooth unlink,  
  that realizes $g_4(K_\Lambda)$?
 \end{ques}

 Any Lagrangian constructed from Legendrian isotopy and oriented Legendrian surgeries would be in ribbon form;  this means that
 the restriction of the height function, given by the $s$ coordinate, to the cobordism  would not have any local maxima in the interior of the cobordism.
So  a positive answer to Question~\ref{ques:lag-g4} would imply  that  the slice genus  
 agrees with the ribbon genus; for some background on this and related  problems, see, for example, \cite{livingston-survey}.

\subsection*{Acknowledgements}  The ideas of this project were inspired by joint work of 
 Josh Sabloff  and the second author.  We are extremely grateful for the many fruitful discussions we have had 
with Sabloff throughout this project.   We also gained much by discussions with 
Chuck Livingston about the smooth $4$-ball genus; we are very thankful for his clear
explanations.  We also thank Paul Melvin and other members of our PACT (Philadelphia Area
Contact/Topology) seminar for useful comments
during a series of presentations on this work.

 \section{Background Information on Legendrian Links} \label{sec:background}
Below is some basic background on Legendrian links.  More information can be found, for example, in
\cite{etnyre:knot-survey}.

The {\bf standard contact
structure} on $\mathbb R^3$ is the field of hyperplanes  $\xi$ where
$\xi_p = \ker(dz - ydx)_p$.  A {\bf Legendrian link} is a submanifold, $L$, of $\mathbb R^3$
diffeomorphic to a disjoint union of circles so that for all $p \in L$,
$T_pL \subset \xi_p$.
It is common to examine Legendrian links from their
$xz$-projections, known as their {\bf front projections}.  A Legendrian link will generically have
an immersed front projection with semi-cubical cusps and no vertical tangents; conversely, any such projection can
be uniquely lifted to a Legendrian link using $y = dz/dx$.  Figure~\ref{fig:trefoils} shows
Legendrian versions of the trefoils $3_1$ and $m(3_1)$.  
\begin{figure}
  \centerline{\includegraphics[height=.8in]{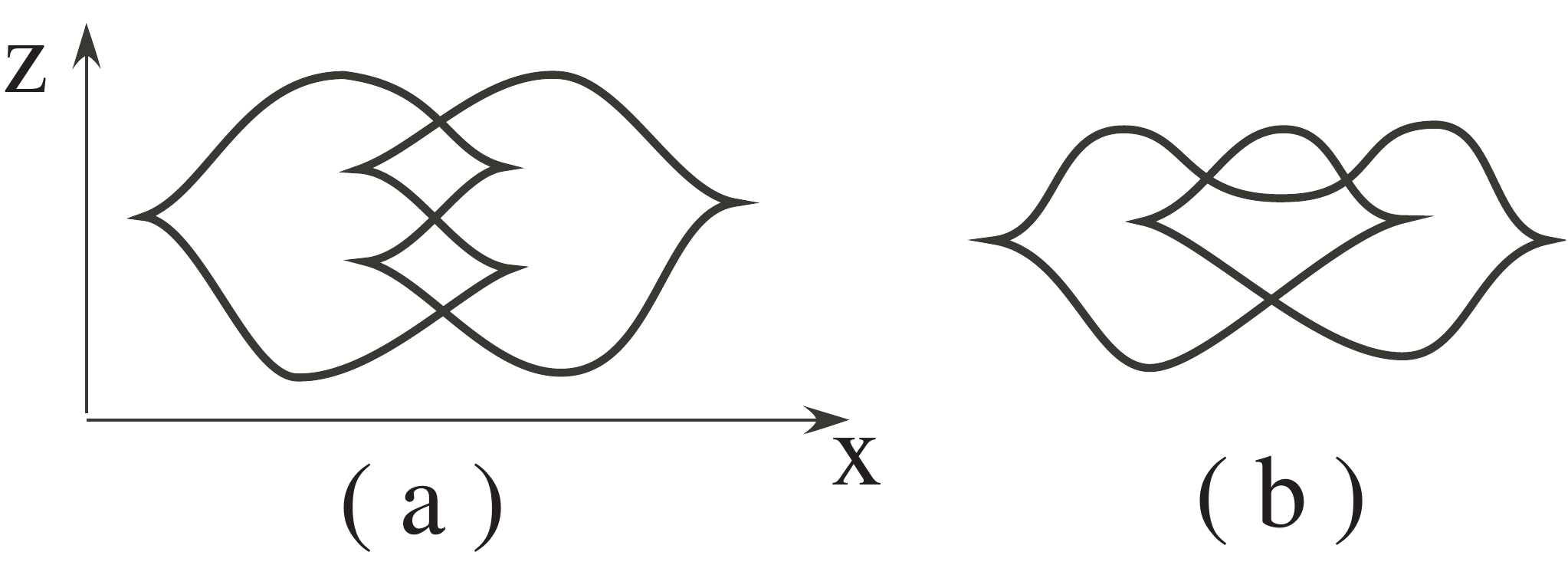}}
  \caption{Front projections of  (a) a Legendrian knot that is topologically the (negative/left) trefoil $3_1$ and
  (b) a Legendrian knot  that is topologically the mirror trefoil $m(3_1)$.  At crossings,
  it is not necessary to specify which strand is the overstrand: the strand with lesser slope will always be on top.}
  \label{fig:trefoils}
\end{figure}

$\Lambda_0$ and $ \Lambda_1$ are {\bf equivalent Legendrian links}
if there exists a $1$-parameter family of Legendrian links $\Lambda_t$ joining $\Lambda_0$ and $\Lambda_1$. 
In fact, Legendrian links $\Lambda_0, \Lambda_1$ are equivalent if and only if their front projections
are equivalent by planar isotopies that do not introduce vertical tangents  and 
the {\bf Legendrian Reidemeister moves} as  shown in Figure~\ref{fig:L-R-moves}.
\begin{figure}
  \centerline{\includegraphics[height=1.2in]{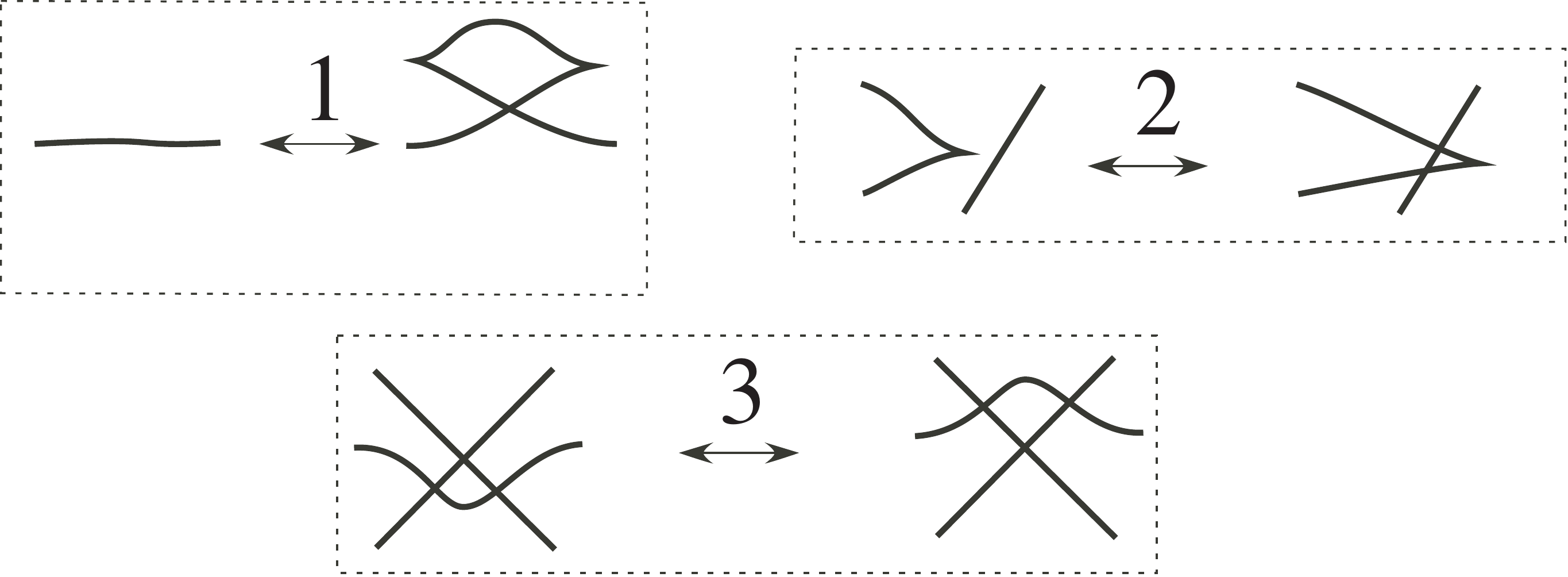}}
  \caption{The three Legendrian Reidemeister moves:  there is another type $1$ move obtained by flipping the 
 planar figure about a horizontal line, and there are three additional type $2$ moves obtained by flipping the 
  planar figure about a vertical, a horizontal, and both a vertical and horizontal line.}
  \label{fig:L-R-moves}
\end{figure}

Every Legendrian knot and link has a Legendrian representative.  In fact, every Legendrian knot
and link has an infinite number of different Legendrian representatives.  For example, Figure~\ref{fig:3-unknots}
shows three different Legendrians that are all topologically the unknot.  These unknots can be distinguished
by classical Legendrian invariants, the Thurston-Bennequin and rotation number.  
These invariants can easily be computed from a front projection of the Legendrian link once we understand how to
 assign a $\pm$ sign to each crossing and an up/down direction to each cusp.
  \begin{figure}
  \centerline{\includegraphics[height=1.2in]{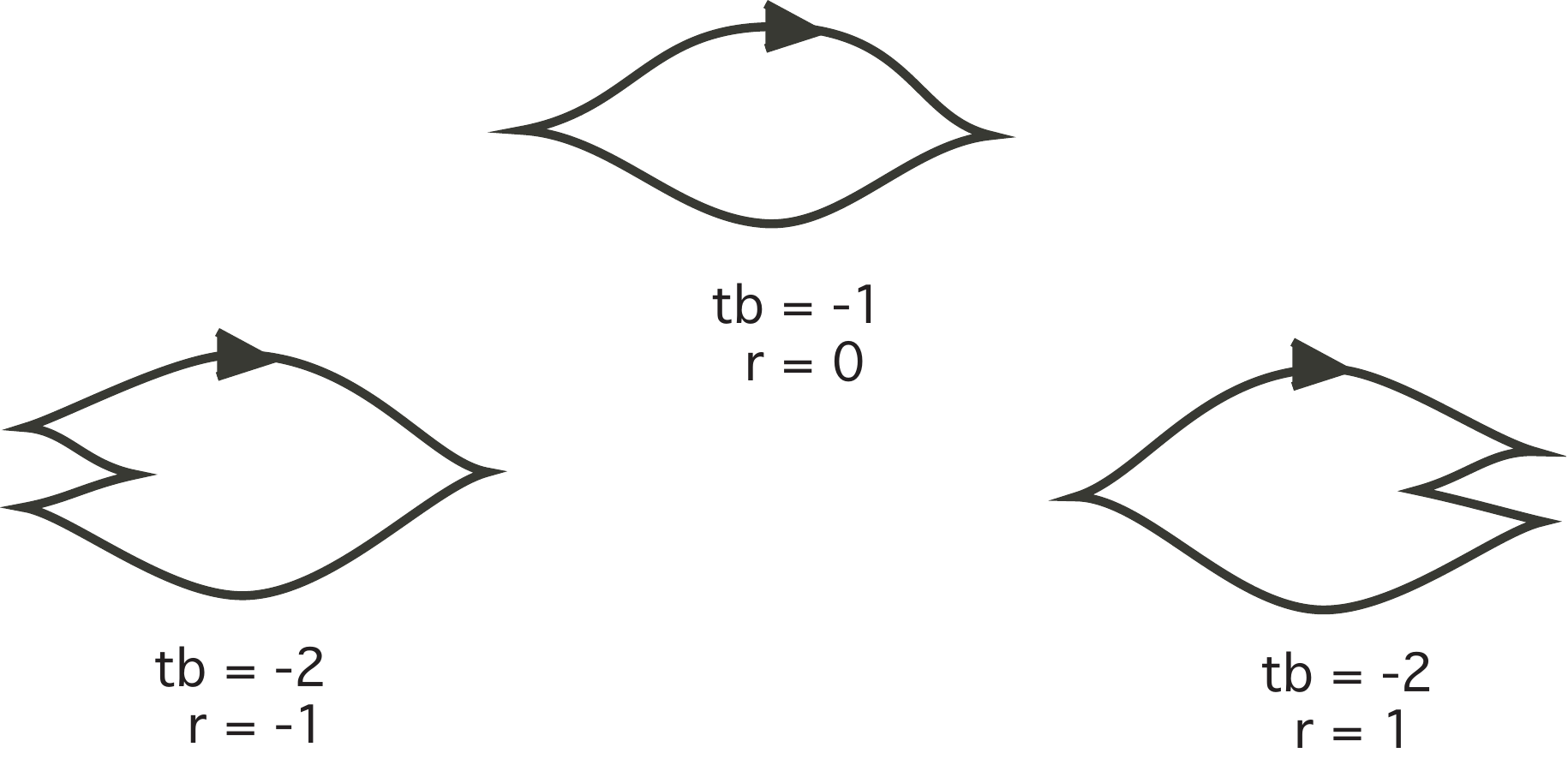}}
  \caption{Three different Legendrian knots that are all topologically the unknot.  } 
  \label{fig:3-unknots}
\end{figure}

 A {\bf positive (negative) crossing} of a front projection of an oriented Legendrian link is a crossing where  the strands  point to the  same side (opposite sides) of a
 vertical line passing through the crossing point; see figure~\ref{fig:positive_negative_crossings}.
Each cusp can also be assigned an {\bf up} or {\bf down} direction; see Figure~\ref{fig:up-down-cusps}.
Then for an oriented Legendrian link $\Lambda$, 
 we have the following formulas for the {\bf Thurston-Bennequin}, $tb(\Lambda)$,  and {\bf rotation number}, $r(\Lambda)$,  invariants:
 \begin{equation} \label{eqn:tb-r}
 tb(\Lambda) = P - N - R, \qquad r(\Lambda) = \frac12(D - U),
 \end{equation}
 where  
 $P$ is the number of positive crossings,     
 $N$ is the number of negative crossings,   
 $R$ is the number of right cusps,  
 $D$ is the number of down cusps,  
 and $U$ is the number of up cusps in a front projection of $\Lambda$.   
 Given that two front projections of equivalent Legendrian links
 differ by the Legendrian Reidemeister moves described in Figure~\ref{fig:L-R-moves}, it is easy to verify that 
 $tb(\Lambda)$ and $r(\Lambda)$ are Legendrian
link invariants.

  \begin{figure}
  \centerline{\includegraphics[height=.8in]{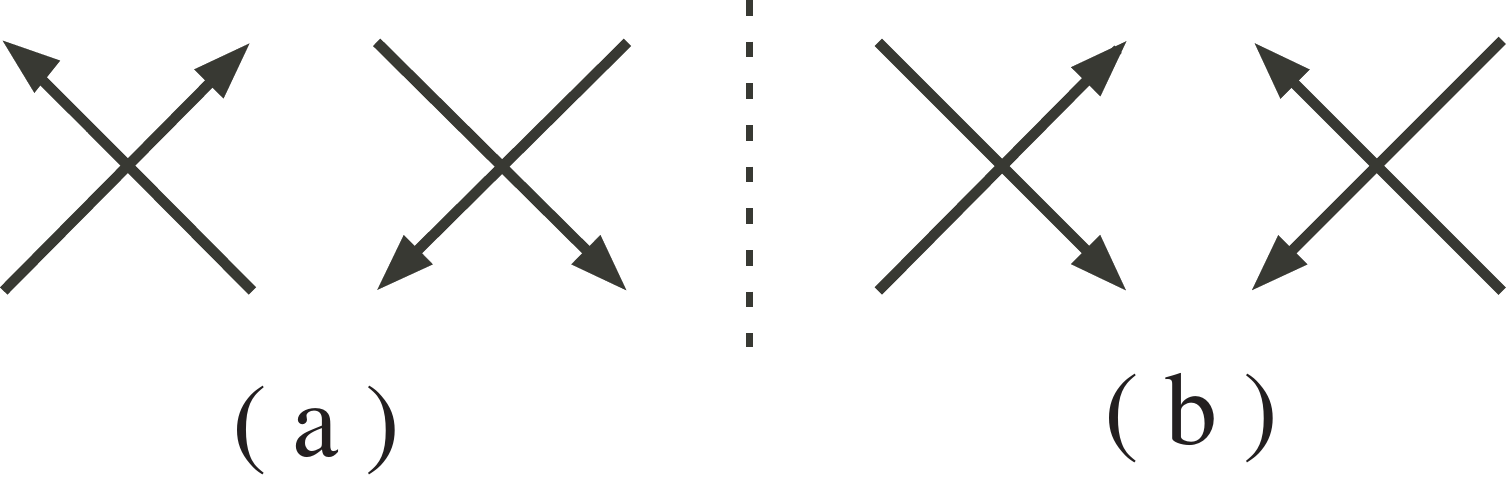}}
  \caption{(a) Negative crossings; (b) Positive crossings. }
  \label{fig:positive_negative_crossings}
\end{figure}

   \begin{figure}
  \centerline{\includegraphics[height=.8in]{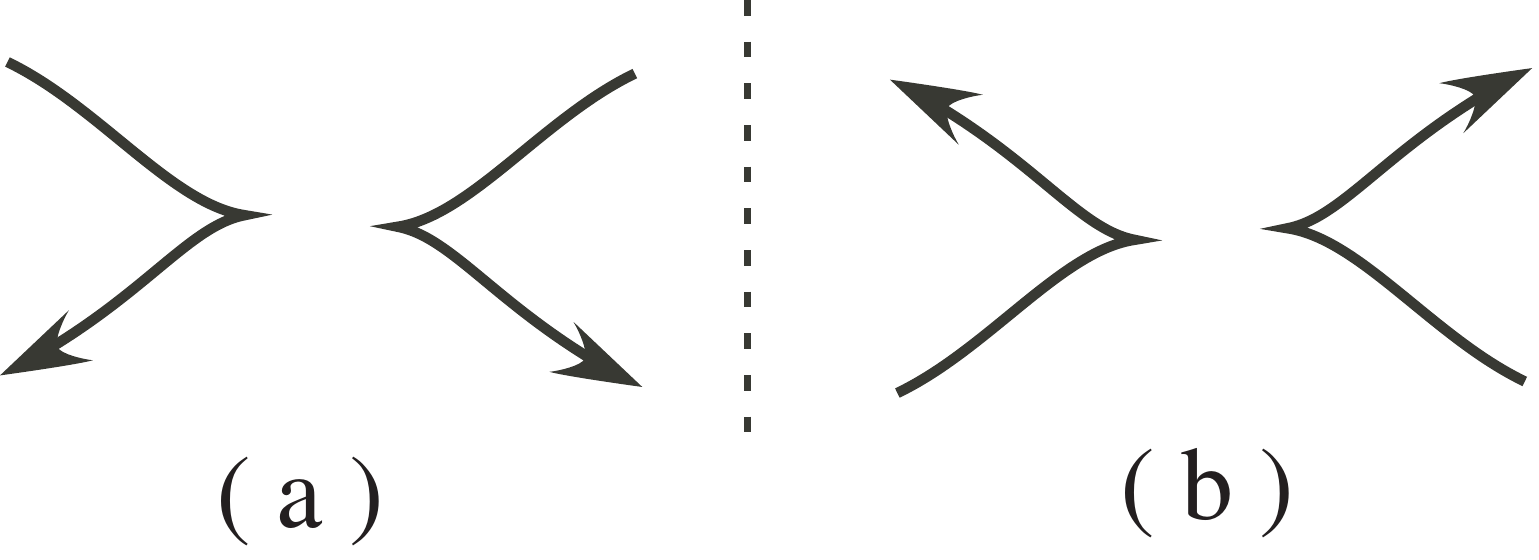}}
  \caption{(a) Right and left down  cusps;  (b) Right and left up  cusps. }
  \label{fig:up-down-cusps}
\end{figure}

 The two unknots in  the second line of Figure~\ref{fig:3-unknots} are obtained from the one at the top
 by adding  an up or down {\bf zig-zag}  (also known as a {\bf $\mp$ stabilization}). 
   In general, this stabilization procedure will not  change the underlying smooth knot type but will
 will decrease the Thurston-Bennequin number by $1$; adding an up (down) zig-zag will decrease (increase)
 the rotation number by $1$.
 If $\Lambda$ is a Legendrian knot, 
 we will use the notation $S_{\pm}(\Lambda)$ to denote the {\bf double stabilization} of $\Lambda$, the Legendrian  knot obtained by adding
 both a positive and negative zig-zag.

 In fact, as discovered by Eliashberg and Fraser, all Legendrian unknots are classified by their Thurston-Bennequin and
 rotation numbers:

 \begin{thm}[\cite{eliashberg-fraser}, \cite{etnyre-honda:knots}]  \label{thm:unknot-classification} 
 Suppose $\Lambda_0$ and $\Lambda_0'$ are oriented Legendrian knots that are both topologically
 the unknot.  Then $\Lambda_0$ is equivalent to $\Lambda_0'$ if and only if $tb(\Lambda_0) = tb(\Lambda_0')$ and
 $r(\Lambda_0) = r(\Lambda_0')$.
 \end{thm}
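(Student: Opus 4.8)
The plan is to dispatch the ``only if'' direction immediately and to concentrate all the work on ``if''. Since the front-projection formulas \eqref{eqn:tb-r} are manifestly invariant under the Legendrian Reidemeister moves of Figure~\ref{fig:L-R-moves}, equivalent Legendrian knots automatically have equal Thurston--Bennequin and rotation numbers, which gives ``only if''. For ``if'', I would first record the classical Bennequin inequality, which for the unknot (Seifert genus $0$) reads $tb(\Lambda_0) + |r(\Lambda_0)| \leq -1$. Hence among Legendrian unknots the pair $tb = -1$, $r = 0$ is the unique maximal possibility, realized by the \emph{standard} unknot $U$ whose front is a single closed curve with two cusps. The strategy is then to reduce the entire classification to two geometric facts: (A) every Legendrian unknot destabilizes repeatedly until it becomes $U$; and (B) $U$ is the unique Legendrian unknot with $tb = -1$. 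Granting these, the zig-zag bookkeeping recalled before the statement finishes the proof.

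To establish (A) and (B) I would pass to convex surface theory. A Legendrian unknot $\Lambda_0$ bounds an embedded disk $D$; after a $C^\infty$-small perturbation rel boundary I may take $D$ convex, with dividing set $\Gamma_D \subset D$. Because $(\rr^3,\xi)$ is tight, Giroux's criterion forbids $\Gamma_D$ from containing a closed curve (such a curve, lying in a disk, would bound and force an overtwisted neighborhood), so $\Gamma_D$ is a disjoint union of properly embedded $\partial$-parallel arcs. The contact framing along $\partial D$ differs from the disk (Seifert) framing by $-\tfrac{1}{2}\#(\Gamma_D \cap \partial D)$, which identifies $tb(\Lambda_0) = -\tfrac{1}{2}\#(\Gamma_D \cap \partial D)$; thus carrying a single dividing arc is exactly the condition $tb(\Lambda_0) = -1$, which is the setting of (B).

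The engine of (A) is the bypass/destabilization correspondence: an outermost arc of $\Gamma_D$ cuts off a half-disk $D' \subset D$ containing no other dividing arcs, and $D'$ is precisely a bypass. Attaching it along $\Lambda_0$ removes one dividing arc and exhibits $\Lambda_0$ as a stabilization $S_\pm$ of a simpler Legendrian unknot, the sign being governed by whether the region of $D \setminus \Gamma_D$ adjacent to $D'$ lies in the positive or negative part $D_\pm$. Iterating peels the arcs one at a time and writes $\Lambda_0$ as an iterated stabilization of the single-arc disk, i.e.\ of $U$; this proves (A), and the Giroux relative Euler-number formula $r(\Lambda_0) = \chi(D_+) - \chi(D_-)$ shows along the way that the \emph{signed} arc count records $r$ while the \emph{total} arc count records $tb$. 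For (B), two convex disks each carrying a single $\partial$-parallel dividing arc have contactomorphic invariant neighborhoods by Giroux's flexibility theorem, and Eliashberg's uniqueness of the tight contact structure on $B^3$ promotes this local contactomorphism to an ambient Legendrian isotopy matching their boundaries; hence $U$ is unique. Finally, since $S_+$ and $S_-$ commute up to Legendrian isotopy, the isotopy class of an iterated stabilization of $U$ depends only on the number $a$ of up and $b$ of down zig-zags, and the stabilization rules give $a + b = -1 - tb$ and $b - a = r$; so $(tb,r)$ determines $(a,b)$, and therefore the Legendrian knot, completing ``if''.

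The main obstacle is the geometric package behind (A) and (B): making the disk convex and controlling its dividing set, proving the bypass--destabilization dictionary with the correct sign bookkeeping, and combining Giroux flexibility with Eliashberg's tight-ball uniqueness to pin down the maximal-$tb$ representative. Once that input from \cite{eliashberg-fraser} and \cite{etnyre-honda:knots} is in hand, the counting with $tb$ and $r$ is entirely routine; it is the convex-surface analysis, rather than any front-projection manipulation, that carries the content.
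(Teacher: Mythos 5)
The paper never proves this statement at all: Theorem~\ref{thm:unknot-classification} is imported as background, with the proof deferred entirely to \cite{eliashberg-fraser} and \cite{etnyre-honda:knots}, and nothing in the paper's later arguments re-derives it. What you have written is, in outline, exactly the proof contained in those references (in the form given by Etnyre and Honda): the ``only if'' direction from Reidemeister invariance of $tb$ and $r$; the Bennequin bound $tb(\Lambda_0)+|r(\Lambda_0)|\leq -1$; destabilization of any Legendrian unknot down to the $tb=-1$ representative by locating outermost $\partial$-parallel arcs of the dividing set of a convex spanning disk and reading them as bypasses; uniqueness of the $tb=-1$ representative via Giroux flexibility together with Eliashberg's uniqueness of the tight contact structure on $B^3$; and finally the count $a+b=-1-tb$, $b-a=r$ using the fact that $S_+$ and $S_-$ commute. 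This roadmap is correct, including the sign and framing bookkeeping ($tb=-\tfrac12\#(\Gamma_D\cap\partial D)$ and $r=\chi(D_+)-\chi(D_-)$). Be aware, though, that it is a roadmap rather than a self-contained proof: the three pillars you lean on --- genericity of convex surfaces and Giroux's criterion in a tight structure, the bypass--destabilization dictionary, and the classification of tight structures on the ball --- are each substantial theorems, and they are precisely the content of the papers the theorem is attributed to. So your proposal reconstructs the literature proof that the paper cites as a black box; it neither conflicts with nor adds to what the paper does, but it should not be mistaken for an argument at the elementary front-projection level of the rest of the paper (contrast the paper's explicitly ``elementary'' proofs of Lemmas~\ref{lem:tb-lb} and \ref{lem:g4-lb}).
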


 \begin{figure}
  \centerline{\includegraphics[height=1in]{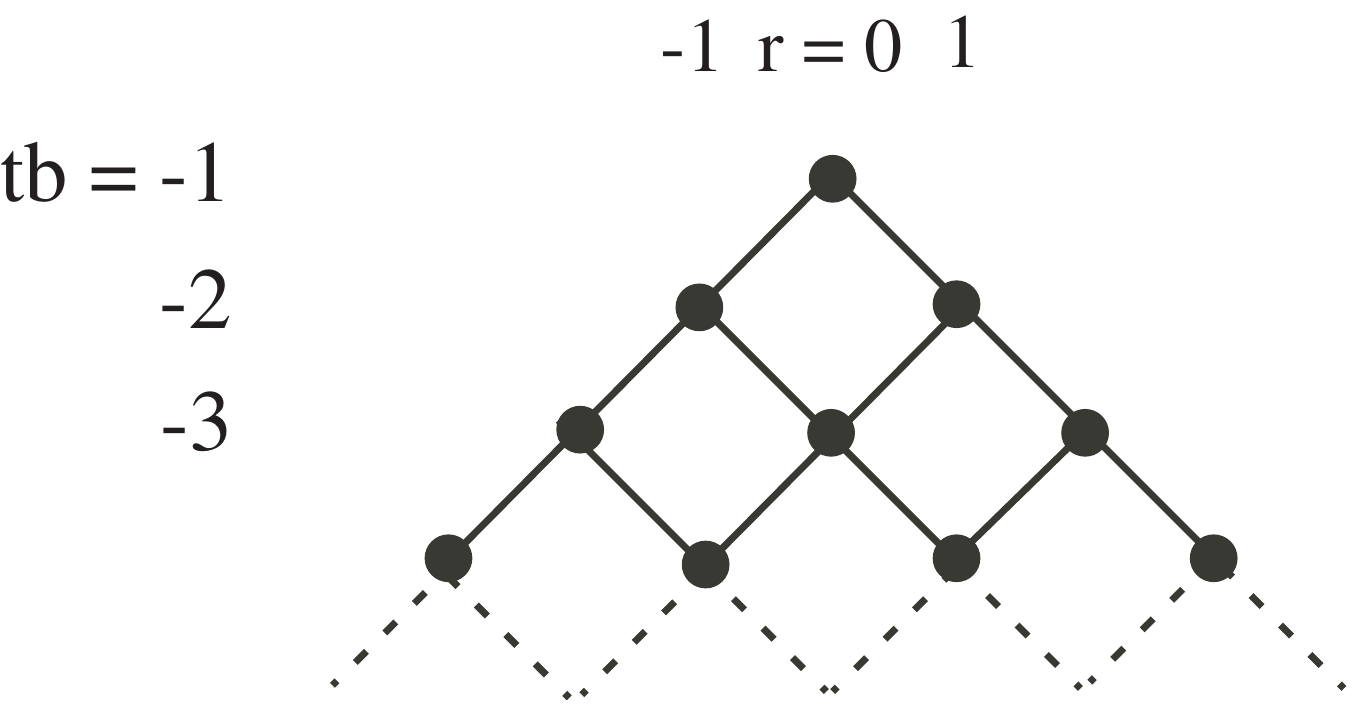}}
  \caption{The tree of all Legendrian unknots. }
  \label{fig:unknots}
\end{figure}

 Figure~\ref{fig:unknots} describes all the Legendrian unknots.  Notice that any Legendrian unknot is equivalent to one that is
 obtained by adding up and/or down zig-zags to the unknot with Thurston-Bennequin number equal to $-1$ and rotation number equal to 
 $0$ shown in Figure~\ref{fig:3-unknots}. 
 
 In general, it is an important question to understand the ``geography" of other knot types.  By work of Etnyre and Honda, \cite{etnyre-honda:knots} and
Etnyre, Ng, and V\'ertesi,  \cite{etnyre-ng-vertesi}, we understand the trees/mountain ranges for all torus and twist knots.  The Legendrian knot atlas
 of Chongchitmate and Ng, \cite{chongchitmate-ng}, gives the known and conjectured mountain ranges for all Legendrian knots with arc index at most $9$; this
 includes all knot types with crossing number at most $7$ and all non-alternating knots with crossing number at most $9$.

 \section{The Surgery Unknotting Number}
 
 In this section, we define the surgery operation, show that every Legendrian link can be unknotted by surgeries, define the
 surgery unknotting number, and give some basic properties of the surgery unknotting number.

 The surgery operation can be viewed as a ``tangle surgery": the replacement
 of one Legendrian tangle by another.
  A {\bf basic, compatibly-oriented Legendrian $0$-tangle} is a Legendrian tangle that is topologically the 0-tangle where the strands
 are oppositely oriented and each strand has  neither crossings nor cusps; the two basic, compatibly-oreinted Legendrian $0$-tangles
 can be seen on the left side of Figure~\ref{fig:0-surgery}.
 A   {\bf basic, compatibly-oriented Legendrian $\infty$-tangle}
 is  Legendrian tangle that is topologically the $\infty$-tangle where the strands are oppositely oriented and each strand has precisely
 $1$ cusp and no crossings;   the two basic, compatibly-oriented Legendrian $\infty$-tangles
 can be seen on the right side of Figure~\ref{fig:0-surgery}.

 \begin{defn} \label{defn:strings}
 An {\bf oriented, Legendrian surgery} of an oriented, Legendrian link is the Legendrian link obtained by replacing a 
 basic, compatibly-oriented Legendrian
 $0$-tangle  with a basic, compatibly-oriented Legendrian $\infty$-tangle;
 see Figure~\ref{fig:0-surgery}.   An {\bf oriented surgery string} consists of a vector of oriented, Legendrian links
 $\left( \Lambda_n, \Lambda_{n-1}, \dots, \Lambda_0\right)$ where, for all $j \in \{n-1, \dots, 0\}$, $\Lambda_{j}$ is obtained from
 $\Lambda_{j+1}$ by an oriented, Legendrian surgery.  An {\bf oriented, unknotting surgery string of length $n$
 for $\Lambda$} consists of an oriented surgery string
 $\left(\Lambda_n, \Lambda_{n-1}, \dots, \Lambda_0\right)$ where $\Lambda_n = \Lambda$ and $\Lambda_0$ is topologically an unknot.
 \end{defn}

 To start, we have the following relationships between the classic invariants of two Legendrian links related by surgery:
   
    \begin{lem} \label{lem:parity+tb}  If $\Lambda$ is an oriented, Legendrian link and $\Lambda'$ is obtained from $\Lambda$ by an oriented, Legendrian surgery, then:
    \begin{enumerate}
    \item the parity of the number of components of $\Lambda$ and $\Lambda'$ differ; 
    \item $tb(\Lambda') = tb(\Lambda) - 1$, and $r(\Lambda') = r(\Lambda)$.
 \end{enumerate}
 \end{lem}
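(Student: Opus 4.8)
The plan is to argue both parts by directly examining the local effect of the surgery move depicted in Figure~\ref{fig:0-surgery}, since the surgery changes the front projection only inside a small disk and leaves everything outside unchanged. This locality is the key observation: all the quantities in question ($tb$, $r$, and the component count modulo $2$) can be computed from the front projection via the formulas \eqref{eqn:tb-r}, so it suffices to tabulate how $P$, $N$, $R$, $D$, $U$, and the number of circles change when we replace the $0$-tangle by the $\infty$-tangle.

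For part (1), I would note that a basic, compatibly-oriented $0$-tangle consists of two oppositely oriented strands that do not cross, while the $\infty$-tangle consists of two oppositely oriented strands each carrying one cusp. Topologically, the $0$-tangle connects the two boundary points on the left to the two on the right ``vertically,'' whereas the $\infty$-tangle connects them ``horizontally.'' The effect of this reconnection on the global link is a standard oriented smoothing/reconnection: it either merges two components of $\Lambda$ into one or splits one component of $\Lambda$ into two, depending on whether the two strands of the $0$-tangle belonged to the same or to different components. In either case the number of components changes by exactly $\pm 1$, so its parity flips, which is precisely part (1).

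For part (2), I would compare the two sides of the surgery locally. Neither the basic $0$-tangle nor the basic $\infty$-tangle contains any crossings, so $P$ and $N$ are unaffected by the move; thus $tb(\Lambda') - tb(\Lambda) = -(R' - R)$, where $R', R$ count right cusps. The $0$-tangle has no cusps, while the $\infty$-tangle has exactly two cusps, one on each strand; inspecting Figure~\ref{fig:0-surgery} shows that exactly one of these is a right cusp (the other being a left cusp), so $R$ increases by $1$ and hence $tb(\Lambda') = tb(\Lambda) - 1$. For the rotation number, I use $r = \tfrac12(D - U)$: again $P$ and $N$ are irrelevant, and I check that of the two new cusps introduced by the $\infty$-tangle, one is an up cusp and one is a down cusp, so $D$ and $U$ each increase by $1$ and $D - U$ is unchanged, giving $r(\Lambda') = r(\Lambda)$.

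The main obstacle I anticipate is bookkeeping the cusp types correctly and uniformly across the two choices of basic $0$-tangle (and correspondingly the two choices of $\infty$-tangle) shown in Figure~\ref{fig:0-surgery}, since the up/down and left/right designations depend on the orientations of the strands. The cleanest way to handle this is to observe that the compatibility of orientations forces the two new cusps to be genuinely opposite in both the up/down and left/right dichotomies in every allowed configuration; I would verify this by a short case check against Figures~\ref{fig:up-down-cusps} directly from the picture, rather than attempting an orientation-independent argument. Once that local cusp accounting is pinned down, both formulas follow immediately and the parity statement is purely combinatorial.
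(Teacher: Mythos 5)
Your proposal is correct and follows essentially the same route as the paper: the paper likewise deduces part (2) by checking the change in $P$, $N$, $R$, $D$, $U$ via Equation~(\ref{eqn:tb-r}) (your cusp bookkeeping—one new right cusp plus one new left cusp, one up plus one down—is exactly the verification the paper leaves to the reader), and part (1) by the same merge-or-split observation about the two strands of the $0$-tangle.
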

 
 \begin{proof} The statements about the Thurston-Bennequin and rotation numbers are easily verified using
 Equation~(\ref{eqn:tb-r}).  
Regarding the parity,  
  one surgery to a knot will always produce a link of two components, while doing a surgery to a link will increase or decrease the number of components
  by $1$ depending on whether or not the strands in the $0$-tangle belong to the same component of the link.
  \end{proof}

Recall that for any Legendrian knot $\Lambda$, the Legendrian knot $\Lambda' = S_\pm(\Lambda)$ obtained as the
double $\pm$ stabilization of $\Lambda$  will have $r(\Lambda') = r(\Lambda)$ and
$tb(\Lambda') = tb(\Lambda) -2$.  Thus it is potentially possible that $\Lambda'$ can be obtained from $\Lambda$ by two oriented Legendrian
surgeries.  In fact, it is possible.

\begin{lem} \label{lem:2-steps-down}  For any oriented, Legendrian knot $\Lambda$  
there exists an oriented surgery string 
$(\Lambda_2, \Lambda_1, \Lambda_0)$ with $\Lambda_2 = \Lambda$ and $\Lambda_0 = S_\pm(\Lambda)$.
\end{lem}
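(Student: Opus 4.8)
The plan is to prove this by an explicit local construction: all three Legendrian links $\Lambda_2 = \Lambda$, $\Lambda_1$, and $\Lambda_0 = S_\pm(\Lambda)$ will agree with $\Lambda$ outside a small disk $D$ meeting $\Lambda$ in a single oriented arc, and inside $D$ I will exhibit fronts related by the two prescribed surgeries of Figure~\ref{fig:0-surgery}. First I would fix such an arc and draw its front in $D$ as a single strand oriented, say, to the right. The double stabilization $S_\pm(\Lambda)$ is, up to Legendrian isotopy, obtained by replacing this strand with the same strand carrying one up zig-zag and one down zig-zag; so it suffices to produce exactly this local front as the outcome of two oriented surgeries supported in $D$.

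Next I would set up the first surgery. Using Legendrian Reidemeister moves (an isotopy, so that $tb$ and $r$ are unchanged and we remain in the class of $\Lambda$), I would modify the strand inside $D$ to contain a basic, compatibly-oriented $0$-tangle, i.e.\ a sub-region with two antiparallel horizontal sub-arcs having no crossings or cusps. Performing the surgery on this $0$-tangle — replacing it by a basic $\infty$-tangle with its left and right cusps — yields $\Lambda_1$. By Lemma~\ref{lem:parity+tb} this is a two-component Legendrian link with $tb(\Lambda_1) = tb(\Lambda) - 1$ and $r(\Lambda_1) = r(\Lambda)$; I would record the orientations of the two new cusps in order to control the rotation-number bookkeeping below. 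After a further Legendrian isotopy if needed, a second basic $0$-tangle then presents itself in $\Lambda_1$, and the second surgery reconnects the two components into a single knot $\Lambda_0$ whose local front is precisely the original strand decorated with the two zig-zags.

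Then I would identify $\Lambda_0$ with $S_\pm(\Lambda)$. Because the construction is entirely local and literally produces the up/down zig-zag pair in the front, this identification is by inspection rather than by invariants; as a consistency check I would verify $tb(\Lambda_0) = tb(\Lambda) - 2$ and $r(\Lambda_0) = r(\Lambda)$, matching the known effect of a double stabilization, and note that the component count passes from $1$ to $2$ to $1$ exactly as forced by Lemma~\ref{lem:parity+tb}(1).

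The main obstacle is the orientation bookkeeping, and it is precisely where the ``one up, one down'' structure of $S_\pm$ is forced. At each step the $0$-tangle being surgered must be compatibly oriented (its two strands antiparallel), so I must choose the isotopy and the placement of the two surgeries so that (i) the intermediate $\Lambda_1$ is a coherently oriented two-component link and (ii) the two cusp pairs created by the surgeries yield zig-zags of opposite sign, giving net rotation-number change $0$ rather than $\pm 2$. Checking that a single consistent choice of orientations achieves all of this simultaneously — equivalently, that the two surgeries genuinely produce one up and one down zig-zag — is the delicate point; the remaining verifications are routine readings of the front.
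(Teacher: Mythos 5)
Your strategy is the same as the paper's: one surgery that splits $\Lambda$ into a two-component link, then a second surgery that reconnects the components so that the net effect on the front is the insertion of one up and one down zig-zag. However, as written your plan has two genuine gaps. The first is in the setup: you take the disk $D$ to meet $\Lambda$ in a single monotone strand and assert that a Legendrian isotopy supported in $D$ makes it ``contain a basic, compatibly-oriented $0$-tangle.'' Two antiparallel horizontal sub-arcs can only occur if the strand reverses horizontal direction, which in a front happens only at cusps, and the only Legendrian Reidemeister move that creates cusps on a cuspless strand is the type $1$ move, which creates a crossing along with them. (Indeed, since an isotopy supported in $D$ preserves the quantity $P-N-R$ computed inside $D$, any configuration of your strand containing a $0$-tangle, hence a right cusp, must also contain at least one crossing.) So after your isotopy the disk contains not just the $0$-tangle but also stray cusps and a crossing, and none of your later steps account for them. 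The paper avoids this issue entirely: every front of $\Lambda$ already has a right cusp, the two strands meeting that cusp are already antiparallel, and the first surgery performed just to the left of it visibly pinches off a small Legendrian unknot, giving $\Lambda_1 = \Lambda \sqcup (\text{unknot})$.

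The second and more serious gap is that the heart of the lemma --- the verification that the second surgery produces exactly the front of $S_\pm(\Lambda)$ --- is the step you explicitly defer as ``the delicate point'' and never carry out. In the paper this verification is precisely the content of Figure~\ref{fig:2-steps-down}; since, as you correctly observe, $tb$ and $r$ cannot identify $\Lambda_0$ with $S_\pm(\Lambda)$ for a general knot, there is no proof without the explicit picture (or an equivalent trace of the front through both surgeries). One part of your worry is, however, vacuous: the rotation number cannot come out wrong by $\pm 2$, because Lemma~\ref{lem:parity+tb}(2) (which you cite) guarantees that every oriented surgery preserves $r$; hence if the local outcome is a pair of zig-zags at all, they are automatically of opposite sign. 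What genuinely needs checking --- and what the paper's figure supplies --- is that the outcome is a strand with two zig-zags in the first place.
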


\begin{proof} These surgeries are illustrated in Figure~\ref{fig:2-steps-down}.
Every Legendrian link $\Lambda$ must have a right cusp. By a Legendrian isotopy, we call pull a right cusp far to
the right and perform one surgery near this right cusp.  This produces a link consisting of the original
link and a Legendrian unknot.  After a Legendrian isotopy, a second surgery can be done using one
strand near the same cusp of the original link and a strand from the unknot.  The result is $S_\pm(\Lambda)$.
\end{proof}

 \begin{figure}[htbp]\label{fig:2-steps-down}
\begin{center}
 \centerline{\includegraphics[height=1.5in]{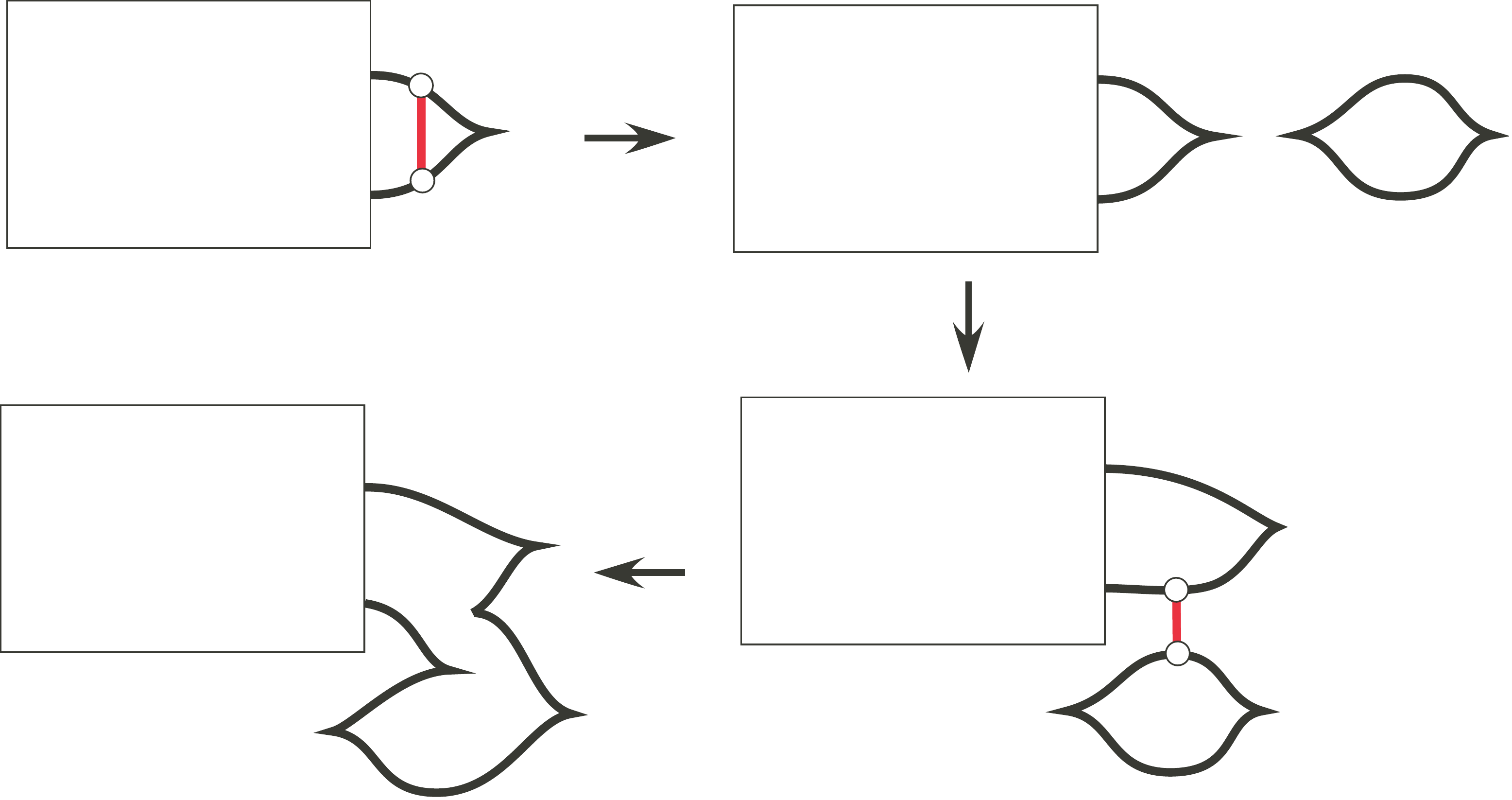}}
\caption{Two oriented, Legendrian surgeries produce $S_\pm(\Lambda)$ from $\Lambda$.}
\end{center}
\end{figure}

In the chart of Legendrian unknots given in Figure~\ref{fig:unknots}, we see that any two unknots
with the same rotation number are related by a sequence of double $\pm$ stabilizations.  Thus we get:

 \begin{cor}   If $\Lambda$ and $\Lambda'$ are oriented, Legendrian unknots with $r(\Lambda) = r(\Lambda')$ and
$tb(\Lambda) = tb(\Lambda') + 2m$, then there exists an oriented surgery string \newline
$\left(\Lambda_{2m}, \Lambda_{2m-1}, \dots,  \Lambda_{0}\right)$, 
where  $\Lambda_{2m} = \Lambda$, and $\Lambda_{0} = \Lambda'$. 
\end{cor}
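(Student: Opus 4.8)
The plan is to build the surgery string in $m$ blocks of two surgeries each, where each block lowers the Thurston--Bennequin number by $2$ while fixing the rotation number, exactly mimicking a double stabilization. Since the hypotheses force $tb(\Lambda) = tb(\Lambda') + 2m$ with $m \geq 0$ (otherwise the requested string would have negative length), $\Lambda$ sits above $\Lambda'$ in its rotation-number column of the tree in Figure~\ref{fig:unknots}, and we need only descend from $\Lambda$ to $\Lambda'$.

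First I would set $\Lambda^{(0)} = \Lambda$ and, for $1 \leq k \leq m$, define $\Lambda^{(k)} = S_\pm\big(\Lambda^{(k-1)}\big)$, the double $\pm$ stabilization. Each stabilization preserves the underlying smooth unknot type, so every $\Lambda^{(k)}$ is topologically an unknot; and since a double stabilization adds one up and one down zig-zag, it preserves the rotation number and decreases $tb$ by $2$. Hence $r(\Lambda^{(m)}) = r(\Lambda) = r(\Lambda')$ and $tb(\Lambda^{(m)}) = tb(\Lambda) - 2m = tb(\Lambda')$. By the Eliashberg--Fraser classification, Theorem~\ref{thm:unknot-classification}, the unknot $\Lambda^{(m)}$ is therefore Legendrian equivalent to $\Lambda'$.

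It remains to realize this chain of double stabilizations by surgeries. Applying Lemma~\ref{lem:2-steps-down} to each $\Lambda^{(k-1)}$ produces an oriented surgery string of length $2$ from $\Lambda^{(k-1)}$ to $\Lambda^{(k)} = S_\pm(\Lambda^{(k-1)})$. Concatenating these $m$ length-two strings yields a single oriented surgery string $(\Lambda_{2m}, \dots, \Lambda_0)$ with $\Lambda_{2m} = \Lambda$ and $\Lambda_0 = \Lambda^{(m)}$; using the equivalence $\Lambda^{(m)} \simeq \Lambda'$ from the previous step, we may take $\Lambda_0 = \Lambda'$. As a consistency check, Lemma~\ref{lem:parity+tb} shows each surgery drops $tb$ by $1$ and fixes $r$, so after $2m$ surgeries $tb$ falls by exactly $2m$ and $r$ is unchanged, matching the hypotheses. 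The only point requiring care is the concatenation itself: one must observe that Lemma~\ref{lem:2-steps-down} applies to an \emph{arbitrary} Legendrian knot (hence to each intermediate stabilization $\Lambda^{(k-1)}$) and that splicing the endpoint of one surgery string to the start of the next again satisfies the surgery-string condition of Definition~\ref{defn:strings}. This is the main, though mild, obstacle; everything else is bookkeeping with the classical invariants.
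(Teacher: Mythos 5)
Your proposal is correct and follows essentially the same route as the paper: the paper deduces the corollary by observing from the tree of unknots that two unknots with equal rotation number differ by a sequence of double $\pm$ stabilizations, and then realizes each such stabilization by the two surgeries of Lemma~\ref{lem:2-steps-down}, concatenating the resulting strings. Your only addition is making the identification $\Lambda^{(m)} \simeq \Lambda'$ explicit via Theorem~\ref{thm:unknot-classification}, which is exactly the content of the paper's appeal to Figure~\ref{fig:unknots}.
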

  
  Thus if we can reach a Legendrian unknot by surgeries, then we can reach an infinite number of Legendrian
  unknots by surgery.   The basis for our new invariant is the fact that every Legendrian link can be ``unknotted" by a string of
 surgeries:
 
 \begin{prop} \label{prop:unknottable}
 For any oriented, Legendrian link $\Lambda$,  there exists an oriented, unknotting surgery string
 $\left( \Lambda = \Lambda_{u}, \Lambda_{u-1}, \dots, \Lambda_0\right)$. 
 Moreover, if $\Lambda$ has $j$ components
 and there exists a front projection of $\Lambda$ with $m$ crossings, then $u \leq 2m + j - 1$.
 \end{prop}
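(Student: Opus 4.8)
The plan is to prove existence and the bound simultaneously by induction on the crossing number $m$ of a fixed front projection of $\Lambda$, carrying the number of components $j$ along; the goal is an unknotting surgery string of length $u \le 2m + j - 1$. For the base case $m = 0$, a crossingless front is a $j$-component Legendrian unlink: each component is a crossingless closed front and hence a topological unknot, and distinct components can be separated in the plane. I would then merge them one pair at a time, bringing two antiparallel, cusp-free strands of two distinct components together to form a basic, compatibly-oriented $0$-tangle and surgering it to an $\infty$-tangle as in Figure~\ref{fig:0-surgery}. By Lemma~\ref{lem:parity+tb} each such surgery joins two components into one, and since we are band-summing unknots along planar bands the result remains a topological unknot; after $j-1$ surgeries we reach a single Legendrian unknot. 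This yields $u \le j - 1 = 2\cdot 0 + j - 1$, as required.

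For the inductive step ($m \ge 1$) the heart of the argument is a local claim: any chosen crossing of the front can be removed, reducing the crossing number by at least one, using at most two oriented Legendrian surgeries that produce no net change in the number of components. Granting this, a surgery string of length $\le 2$ carries $\Lambda$ to a Legendrian link $\Lambda'$ with a front having at most $m-1$ crossings and the same $j$ components; the inductive hypothesis unknots $\Lambda'$ in at most $2(m-1) + j - 1$ further surgeries, for a total $u \le 2 + 2(m-1) + j - 1 = 2m + j - 1$. The parities are consistent: by Lemma~\ref{lem:parity+tb} each surgery reverses the parity of the number of components, so a net-zero pair preserves it, and the final $j-1$ merges take the count from $j$ down to $1$, matching $2m + j - 1 \equiv j - 1 \pmod 2$. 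To realize the local claim I would, near the crossing and after a Legendrian isotopy, first split off a small Legendrian unknot through a basic $0$-tangle exactly as in the first surgery of Lemma~\ref{lem:2-steps-down} (this raises the component count by one), and then use a second surgery to reconnect the split piece on the far side of the crossing (lowering the count back by one); the crossing thereby becomes nugatory and is deleted by a planar isotopy together with a type-$1$ Legendrian Reidemeister move, leaving the component count unchanged.

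The main obstacle is precisely this local construction. The surgery operation requires a compatibly-oriented (antiparallel) $0$-tangle, whereas the two strands meeting at a crossing need not present such a tangle directly, and the usable local picture depends on whether the crossing is positive or negative---exactly the distinction that later forces the special \emph{S} and \emph{hooked-X} forms in the positive case. I therefore expect the delicate part to be producing, for a crossing of arbitrary sign, a pair of $0$-tangles on which the two surgeries (i) genuinely lower the crossing count, (ii) introduce no new crossings elsewhere in the front, and (iii) net to zero change in the number of components. Throughout, Lemma~\ref{lem:parity+tb} and the behavior of $tb$ and $r$ serve as consistency checks on the bookkeeping rather than as inputs, and once the local claim is verified the existence statement of Proposition~\ref{prop:unknottable} follows immediately from the construction.
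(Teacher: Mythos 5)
Your induction scheme collapses onto one unproven assertion---that \emph{any} crossing of the front can be eliminated by at most two oriented surgeries with no net change in the number of components and no new crossings---and that assertion is precisely where all the content lies; you flag it yourself as ``the main obstacle,'' but flagging it does not discharge it. Moreover, the mechanism you sketch does not deliver it. Splitting off a small unknot as in the first surgery of Lemma~\ref{lem:2-steps-down} and re-attaching it ``on the far side of the crossing'' is a crossing \emph{change}, not a crossing \emph{removal}: whether a crossing is nugatory is a global property of the diagram, and neither a crossing change nor your split-and-reconnect move makes a crossing nugatory in general, so the crossing count need not drop and your induction does not advance. Worse, for a \emph{positive} crossing the move cannot even get started locally: by definition all four strand-ends at a positive crossing point to the same side of a vertical line, so no two of them ever form a basic, compatibly-oriented $0$-tangle, and there is no surgery available in a neighborhood of the crossing at all. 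This orientation obstruction is not a technicality to be smoothed over later; it is the reason the paper's argument for positive crossings is global rather than local, and the reason Section~5 needs the special S, Z, and hooked-X forms.

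The paper's proof avoids both difficulties by being greedier and less symmetric than yours. Each \emph{negative} crossing is removed by a single surgery done immediately to its right (the crossing strands there are antiparallel, Figure~\ref{fig:neg-cross}). For positive crossings one cannot work at an arbitrary crossing: instead one isotopes so all crossings have distinct $x$-coordinates and repeatedly attacks the \emph{leftmost} positive crossing, whose two strands run leftward into left cusps with no intervening crossings; those cusps supply the antiparallel strands, in one of the six configurations of Figure~\ref{fig:positive-cross}, and again a single surgery plus Reidemeister moves kills the crossing. Thus $m$ surgeries, one per crossing, reach a crossingless front; the component count is allowed to drift, and by Lemma~\ref{lem:parity+tb} it can grow to at most $c \le j+m$. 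A final stack-and-merge (Figure~\ref{fig:unknot-stack}) uses $c-1 \le j+m-1$ further surgeries, giving $u \le m + (j+m) - 1 = 2m + j - 1$. Note that your insistence on net-zero component change per crossing is both unnecessary for the bound (letting the count drift yields the same inequality) and strictly harder to arrange: right after an oriented smoothing the two new strands are parallel, so the ``re-merging'' surgery you posit has no local $0$-tangle to act on either. If you want to repair your write-up, the honest route is to drop the net-zero constraint, prove the one-surgery removal for negative crossings locally, and then reproduce the leftmost-positive-crossing case analysis---at which point you will have rewritten the paper's proof.
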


 \begin{proof}  Assume that there is a front  projection of $\Lambda$ with $m$ crossings.  We will first show that 
 there is an oriented surgery string $\left(\widetilde \Lambda_m, \widetilde \Lambda_{m-1}, \dots, \widetilde \Lambda_0\right)$,
 where $\widetilde \Lambda_m = \Lambda$ and 
 $\widetilde \Lambda_0$ is a trivial link of Legendrian unknots.     If $\widetilde \Lambda_0$ has $c$ components, we will then show that
 it is possible to do an additional $c-1$  surgeries to get this into a single unknot.  
   
  Given the initial Legendrian link $\Lambda$ having a projection with $m$ crossings, assume that $n$ of these crossings
  are negative.  It is then possible to construct a surgery string $\left(\widetilde \Lambda_m, \widetilde \Lambda_{m-1}, \dots, \widetilde \Lambda_{m-n}\right)$ where
  $\widetilde \Lambda_m = \Lambda$  and $\widetilde \Lambda_{m-n}$ has a front projection with $m - n$ crossings, all of which are positive.
  This surgery string is obtained by doing a surgery
  to the right of each negative crossing as shown in Figure~\ref{fig:neg-cross}, and then doing a Legendrian isotopy to
  remove the positive crossing introduced by the surgery.   
  Next, by applying a 
  planar Legendrian isotopy, it is possible to assume that all the crossings of $\widetilde \Lambda_{m-n}$ have distinct $x$-coordinates.  
  The left cusps associated to the  leftmost positive crossing are either nested or stacked and fall into one of the $6$ cases
 listed in Figure~\ref{fig:positive-cross}.     
  For each case, it is possible to do a surgery immediately to the right of this leftmost crossing. After Legendrian Reidemeister moves, the crossing is eliminated
  and the number of crossings of the projection of the resulting link has decreased by 1; see Figure~\ref{fig:positive-cross}.  What was
  the second leftmost  positive crossing is now the leftmost positive crossing and the procedure can be repeated.  
  In this way, we obtain a surgery string of Legendrian links 
  $\left(\widetilde \Lambda_m, \dots, \widetilde \Lambda_{m-n}, \widetilde \Lambda_{m-n-1}, \dots, \widetilde \Lambda_0\right)$
  where $\widetilde \Lambda_0$ has a front projection with no crossings.  It follows that $\widetilde \Lambda_0$ is topologically a trivial link of unknots.  
    By applying a Legendrian isotopy, we can assume that $\widetilde \Lambda_0$ consists of $c$ Legendrian unknots which are
 vertically stacked and where each unknot
 is oriented ``clockwise"; an example of this is shown in Figure~\ref{fig:unknot-stack}. It is then easy to see
 that after applying $c-1$ additional surgeries, we can  obtain a Legendrian unknot.  Thus there is a length $u = m + c-1$ 
 unknotting surgery sequence for $\Lambda$. 
   By Lemma~\ref{lem:parity+tb}, if $\Lambda = \widetilde \Lambda_m$ has
 $j$ components, $\widetilde \Lambda_0$ has at most $c= j+m$ components.  Thus we see that $u \leq 2m +j - 1$, as claimed. \end{proof}

   \begin{figure}
 \centerline{\includegraphics[height=0.5in]{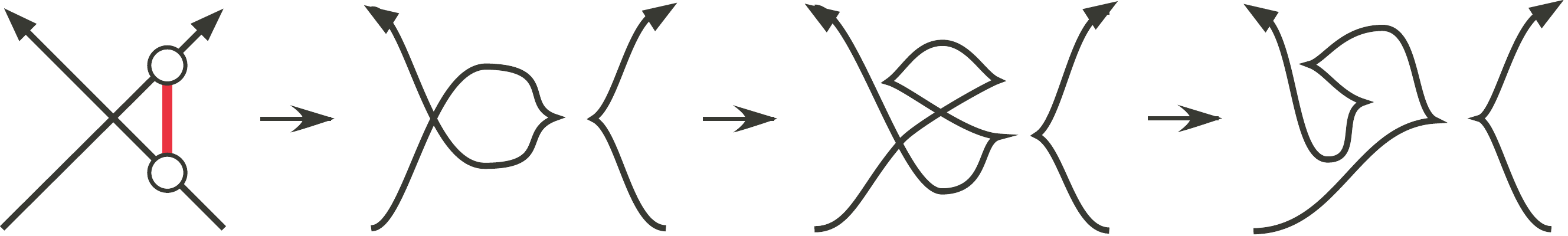}}
  \caption{A negative crossing can be removed by an oriented Legendrian surgery and then Legendrian isotopy.}
  \label{fig:neg-cross}
\end{figure}

   \begin{figure}
  \centerline{\includegraphics[height=2in]{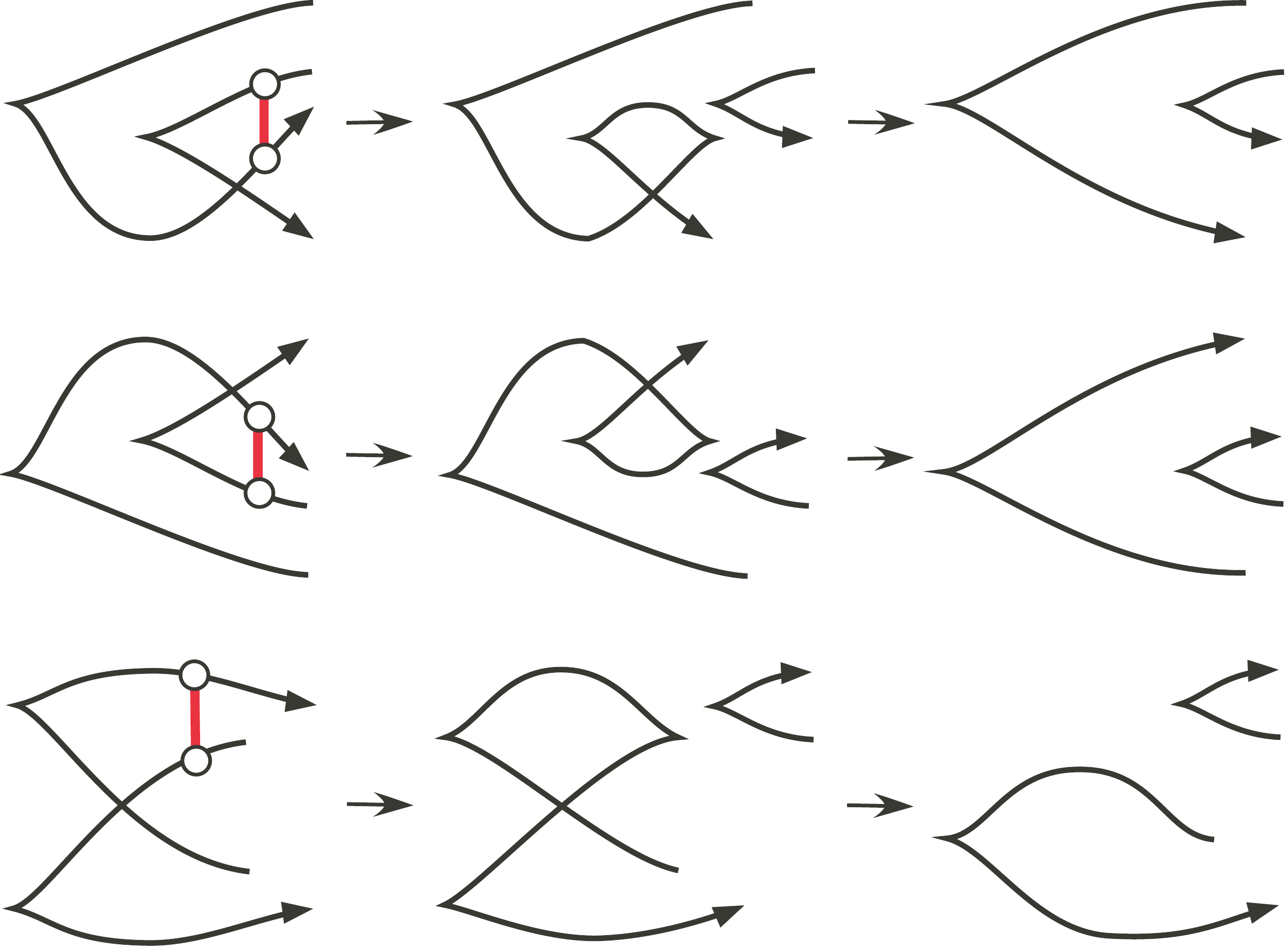}}
  \caption{Three cases for the leftmost positive crossing and their associated left cusps; three additional cases
  are obtained by reversing the orientations on both strands.}
  \label{fig:positive-cross}
\end{figure}

   \begin{figure}
 \centerline{\includegraphics[height=1.2in]{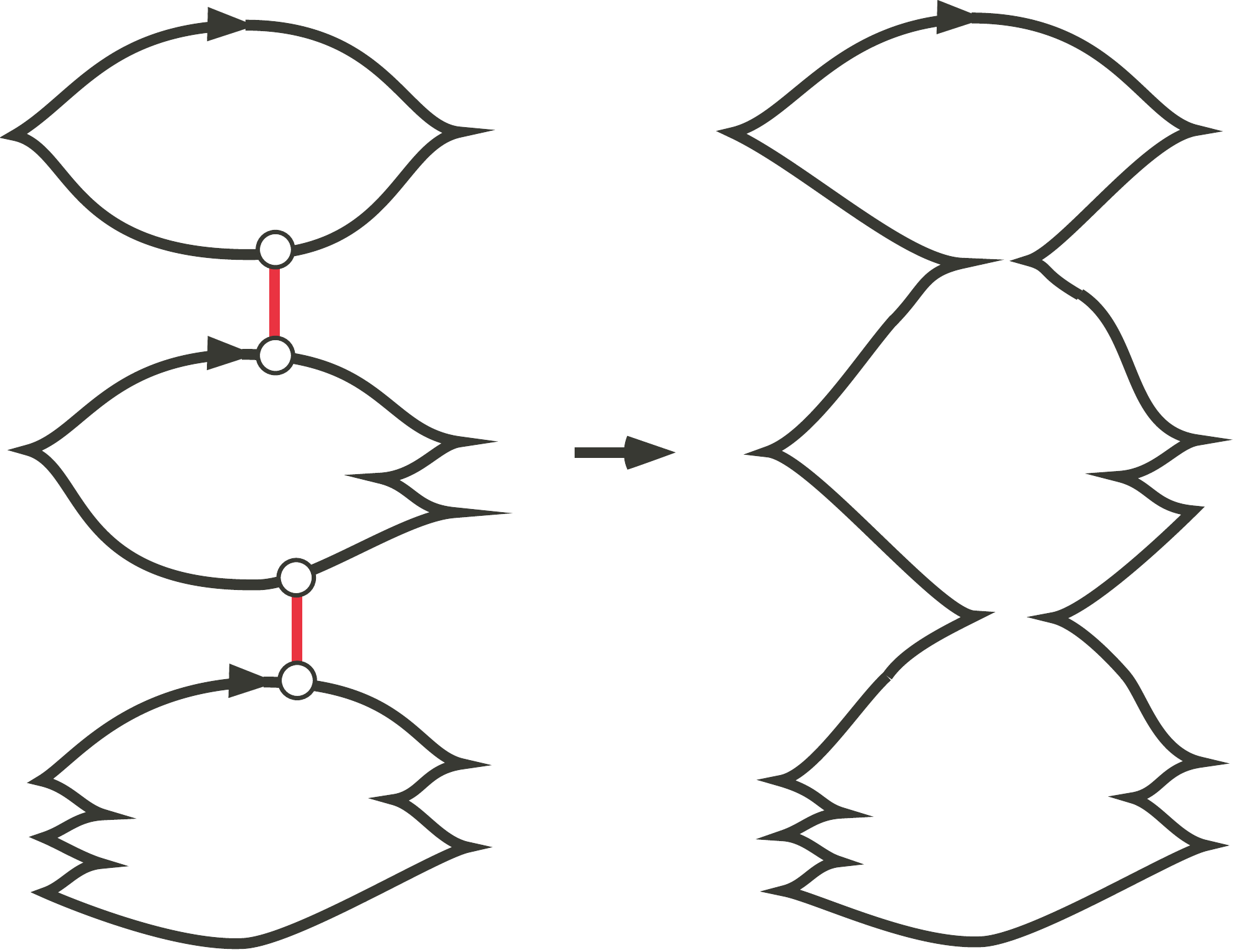}}
  \caption{After all crossings are eliminated, a Legendrian isotopy can be applied so that $\Lambda_0$ is a stack
  of $c$ Legendrian unknots oriented clockwise.  After $c-1$ additional surgeries, a Legendrian unknot is obtained.}
  \label{fig:unknot-stack}
\end{figure}

\begin{defn} \label{defn:sun} Given a Legendrian link $\Lambda$,  the (oriented)  {\bf Legendrian surgery unknotting number of $\Lambda$}, $\lso(\Lambda)$, is 
defined as the minimal length of an oriented, unknotting surgery string for $\Lambda$.
\end{defn}
 
 \begin{rem} \label{rem:basics} Here are some basic properties of $\lso(\Lambda)$:
\begin{enumerate}
\item  By Lemma~\ref{lem:parity+tb},  
for any Legendrian link $\Lambda$, the parity of $\lso(\Lambda)$ is opposite the parity of the number of components of $\Lambda$;
\item    
 For any oriented, Legendrian link $\Lambda$ with $j$ components, $j-1 \leq \lso(\Lambda) < \infty$, with $0 =  \lso(\Lambda)$ iff $\Lambda$ is topologically an unknot; 
 \item  If $\Lambda$ is a topolopologically non-trivial Legendrian knot and there exists an oriented unknotting surgery string for $\Lambda$
of length $2$,  then $\lso(\Lambda) = 2$;
\item If $\Lambda'$ is obtained from $\Lambda$ by stabilization(s), then $\lso(\Lambda') \leq \lso(\Lambda)$. 
\end{enumerate}
\end{rem}

Propositon~\ref{prop:unknottable} and, more importantly, 
explicit calculations will give  upper bounds for $\lso(\Lambda)$.
Now we turn to examining some lower bounds for $\lso(\Lambda)$.  

First, by Theorem~\ref{thm:unknot-classification},
 if $\Lambda'$ is a Legendrian unknot, then $tb(\Lambda') + |r(\Lambda')| \leq -1$.  Thus if $\Lambda$ is a Legendrian link with a 
 ``large"  Thurston-Bennequin and/or rotation number, one is forced to do a certain number of Legendrian surgeries.  More precisely, 
 Lemma~\ref{lem:parity+tb} implies:
 
 \begin{lem}\label{lem:tb-lb} For any Legendrian link $\Lambda$,
  $$tb(\Lambda) + |r(\Lambda)| + 1 \leq \lso(\Lambda).$$ 
\end{lem}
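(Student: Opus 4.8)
The plan is to track the classical Legendrian invariants along an unknotting surgery string of minimal length and then invoke the constraint satisfied by all Legendrian unknots. Concretely, I would set $u = \lso(\Lambda)$ and fix a minimal oriented, unknotting surgery string $(\Lambda = \Lambda_u, \Lambda_{u-1}, \dots, \Lambda_0)$, where $\Lambda_0$ is a Legendrian unknot. The key input is Lemma~\ref{lem:parity+tb}(2): a single oriented, Legendrian surgery lowers $tb$ by exactly $1$ and leaves $r$ unchanged. Applying this relation across all $u$ surgeries in the string gives
$$tb(\Lambda_0) = tb(\Lambda) - u, \qquad r(\Lambda_0) = r(\Lambda),$$
and in particular $|r(\Lambda_0)| = |r(\Lambda)|$.

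Next I would use the classification of Legendrian unknots (Theorem~\ref{thm:unknot-classification}) in the form already recorded above: every Legendrian unknot $\Lambda_0$ satisfies $tb(\Lambda_0) + |r(\Lambda_0)| \leq -1$. Substituting the two identities above yields
$$tb(\Lambda) - u + |r(\Lambda)| \leq -1,$$
which rearranges immediately to $tb(\Lambda) + |r(\Lambda)| + 1 \leq u = \lso(\Lambda)$, as desired.

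There is no substantive obstacle here; once Lemma~\ref{lem:parity+tb} is in hand the argument is pure bookkeeping. The only point deserving a word of care is the unknot inequality $tb + |r| \leq -1$, which I would justify from Figure~\ref{fig:unknots}: the maximal-$tb$ unknot has $(tb, r) = (-1, 0)$, and every other Legendrian unknot is obtained from it by adding $a$ up and $b$ down zig-zags, giving $tb = -1 - (a+b)$ and $r = b - a$, so that $tb + |r| = -1 - (a+b) + |b-a| \leq -1$ since $|b-a| \leq a+b$. I expect the author's proof to be essentially this same two-line computation, packaging the unknot bound and Lemma~\ref{lem:parity+tb} together.
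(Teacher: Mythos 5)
Your proposal is correct and matches the paper's argument exactly: the paper derives this lemma by combining the unknot bound $tb(\Lambda_0) + |r(\Lambda_0)| \leq -1$ (from Theorem~\ref{thm:unknot-classification}) with Lemma~\ref{lem:parity+tb}, which is precisely your bookkeeping along a minimal unknotting surgery string. Your justification of the unknot inequality via zig-zag counting is also the same reasoning the paper encodes in its tree of Legendrian unknots.
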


Lemma~\ref{lem:tb-lb} gives us improved lower bounds over those given in Remark~\ref{rem:basics} when  $2 \leq tb(\Lambda) + |r(\Lambda)|$. 
  \footnote{The parity of $tb(\Lambda) + |r(\Lambda)|$ agrees with the parity of the number of components of $\Lambda$, so for knots, we get
  interesting new bounds when $3 \leq tb(\Lambda) + |r(\Lambda)|$.}
For example, there exists a Legendrian whose underlying smooth knot type is $m(5_1)$  and whose classical
invariants satisfy  $tb(\Lambda)  +| r(\Lambda)|= 3$; see, for example, \cite{chongchitmate-ng}.  Thus 
 Lemma~\ref{lem:tb-lb} implies
$4 \leq \lso(\Lambda)$.  However 
for many links,  $tb(\Lambda) + |r(\Lambda)| \leq 2$.  For example, for any  Legendrian $\Lambda$ that is topologically the $5_1$ knot,  
 $tb(\Lambda) + |r(\Lambda)| \leq -5$.  Although Lemma~\ref{lem:tb-lb} will not help us, in this case we can make use of:

    \begin{lem}\label{lem:g4-lb}  For  a Legendrian link $\Lambda$ with $j$ components, let $L_\Lambda$ denote the underlying smooth 
    link type of $\Lambda$, 
  and let $g_4(L_\Lambda)$ denote the smooth $4$-ball genus of  $L_\Lambda$.  
  Then $$2 g_4(L_\Lambda) + (j-1) \leq \lso(\Lambda).$$
  \end{lem}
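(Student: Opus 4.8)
The plan is to interpret each oriented Legendrian surgery topologically as an oriented saddle (band) move, to assemble the moves of an unknotting surgery string into a single oriented surface in $B^4$ bounding $L_\Lambda$, and then to extract the genus bound from an Euler characteristic count. The orientation hypothesis on the surgeries is exactly what makes the resulting surface orientable, which is essential since $g_4(L_\Lambda)$ is defined using oriented surfaces.

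First I would observe that, after forgetting the contact structure, replacing a basic, compatibly-oriented $0$-tangle by an $\infty$-tangle (Figure~\ref{fig:0-surgery}) is precisely an oriented saddle move: the two oppositely oriented strands are joined by a single oriented band. Hence, given an oriented unknotting surgery string $(\Lambda = \Lambda_n, \dots, \Lambda_0)$ of length $n$, I would glue the levels $\{s = i\}$ together by product (Legendrian-isotopy) pieces and $n$ pair-of-pants pieces to build a smooth, compact, oriented cobordism $C \subset \mathbb{R}^3 \times [0,n]$ from $\Lambda_0$ at $s=0$ to $L_\Lambda$ at $s=n$, with exactly one index-$1$ critical point of the height function $s$ for each surgery.

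Next I would cap off. Since $\Lambda_0$ is topologically an unknot it bounds a smooth disk $D$; gluing $D$ below the level $s=0$ and pushing the result into $B^4$ yields a smooth, compact, oriented surface $\overline{\Sigma} \subset B^4$ with $\partial \overline{\Sigma} = L_\Lambda \subset S^3 = \partial B^4$. The height function on $\overline{\Sigma}$ now has a single index-$0$ critical point (the center of $D$), the $n$ index-$1$ critical points coming from the surgeries, and no index-$2$ critical points, because the top is free boundary rather than a cap. Counting critical points by index gives $\chi(\overline{\Sigma}) = 1 - n$.

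The step I expect to require the most care is verifying that $\overline{\Sigma}$ is connected, since the definition of $g_4(L_\Lambda)$ demands a connected surface and tubing disconnected pieces together would only weaken the inequality. Here the absence of maxima is decisive: every connected component of $\overline{\Sigma}$ is a genuine surface whose entire boundary lies at the top level $s=n$, so the height function attains its minimum over that component at an interior point, i.e.\ at an index-$0$ critical point. As there is exactly one index-$0$ critical point in all of $\overline{\Sigma}$, the surface must be connected. Finally, a connected oriented surface of genus $g$ with $j$ boundary circles satisfies $\chi = 2 - 2g - j$, so $g(\overline{\Sigma}) = (n - j + 1)/2$. Since $g_4(L_\Lambda) \leq g(\overline{\Sigma})$ by definition, this gives $2 g_4(L_\Lambda) + (j-1) \leq n$, and minimizing over all oriented unknotting surgery strings for $\Lambda$ yields $2 g_4(L_\Lambda) + (j-1) \leq \lso(\Lambda)$, as claimed.
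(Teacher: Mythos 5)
Your proof is correct and follows essentially the same route as the paper's: both convert the oriented surgery string into a smooth, compact, oriented, connected surface in $B^4$ bounding $L_\Lambda$ with Euler characteristic $1-n$, then apply $\chi = 2-2g-j$ and minimize over strings. The paper simply asserts this construction in one sentence, whereas you carry it out explicitly (saddle cobordisms, capping the unknot, the Morse-theoretic count, and the single-minimum argument for connectedness), which fills in details the paper leaves to the reader.
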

  
  \begin{proof}  From a Legendrian surgery string of length $n$ that ends at an unknot, one can construct 
  a smooth, orientable, compact,  and connected $2$-dimensional surface in $B^4$ with boundary equal to $L_\Lambda$
  and Euler characteristic equal to 
  $1 - n$;  the genus, $g$, of this surface satisfies $1 - n= 2 - 2g - j$.
  Thus, by definitions of  the smooth $4$-ball genus, 
  $$(j-1) + 2g_4(L_\Lambda) \leq (j-1) + 2g = n.$$
  Since $\lso(\Lambda)$ is the minimum length of a surgery unknotting string, the claim follows.
    \end{proof}

    A convenient table of smooth $4$-ball genera of knots can be found at Cha and Livingston's KnotInfo website, 
    \cite{knot-info}.

       \section{The Surgery Unknotting Number for Families of Knots}
  \label{sec:families}
 
 In this section we will calculate the surgery unknotting numbers for Legendrian twist knots, Legendrian torus links,
 and positive, Legendrian rational links.  The fact that we can precisely calculate these numbers for the first two families rests upon
 classification results of \cite{etnyre-ng-vertesi}, \cite{etnyre-honda:knots}, and \cite{dalton}.  
 
    \begin{subsection}{Legendrian twist knots}   
  
   A twist knot is a knot that is smoothly equivalent to a knot $K_m$  in the form 
   of  Figure~\ref{fig:top-twist}.  In other words, a twist knot is a twisted Whitehead double of the unknot.      
      
       \begin{figure}
  \centerline{\includegraphics[height=.7in]{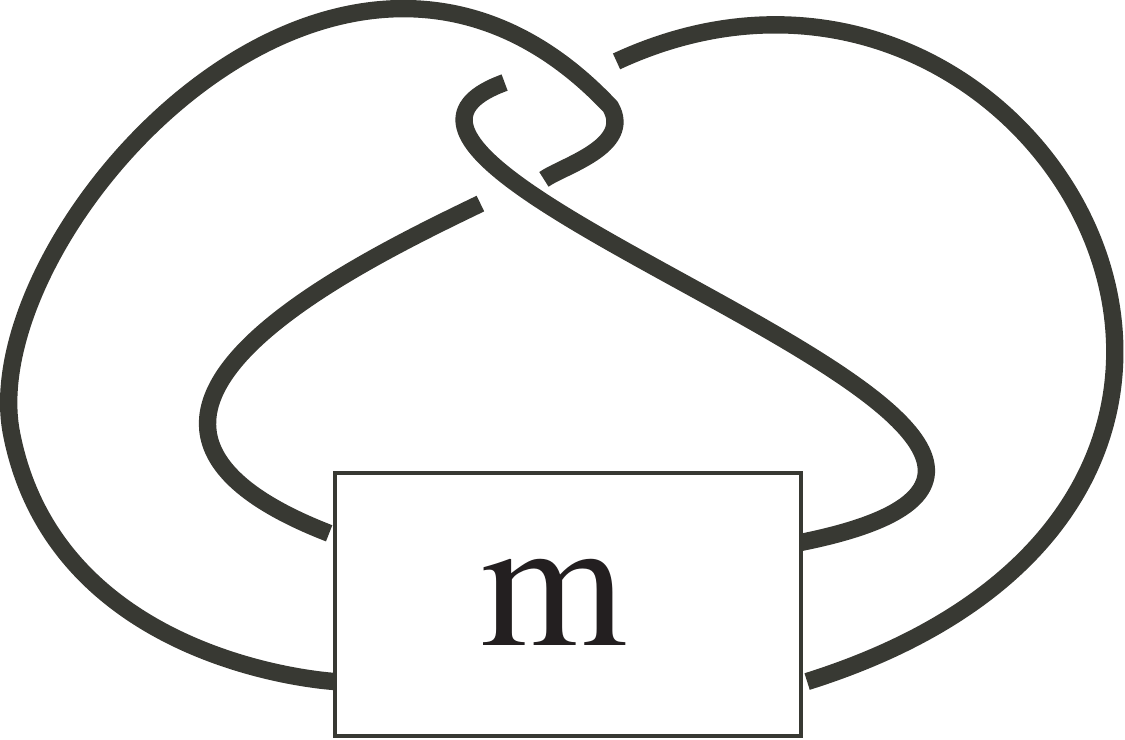}}
  \caption{The twist knot $K_m$; the box contains $m$ right-handed half twists if
  $m \geq 0$, and $|m|$ left-handed twists if $m < 0$. Notice that $K_0$ and $K_{-1}$ are unknots.}
  \label{fig:top-twist}
\end{figure}

  \begin{thm}\label{thm:twist}  If $\Lambda$ is a Legendrian knot that is topologically a non-trivial twist knot
  then  $\lso(\Lambda) = 2$. 
  \end{thm}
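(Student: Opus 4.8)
The plan is to establish the two inequalities $\lso(\Lambda) \geq 2$ and $\lso(\Lambda) \leq 2$ separately. For the lower bound I would argue uniformly by parity rather than by the $4$-ball genus: since $\Lambda$ is topologically non-trivial, Remark~\ref{rem:basics}(2) gives $\lso(\Lambda) \geq 1$, and since a knot has a single (odd) component, Remark~\ref{rem:basics}(1) forces $\lso(\Lambda)$ to be even, so $\lso(\Lambda) \geq 2$. (For the non-slice twist knots one could instead invoke Theorem~\ref{thm:lso-lb}(2) with $g_4 = 1$, but the parity argument has the advantage of covering the slice cases $6_1$ and $m(6_1)$ as well.) By Remark~\ref{rem:basics}(3), it then remains only to produce, for each Legendrian twist knot $\Lambda$, an oriented unknotting surgery string of length $2$.

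To handle every representative at once, I would first reduce to the maximal-Thurston--Bennequin representatives via monotonicity under stabilization, Remark~\ref{rem:basics}(4): once $\lso \leq 2$ is known for a given Legendrian knot, it holds for all of its stabilizations. The classification of Legendrian twist knots of Etnyre, Ng, and V\'ertesi \cite{etnyre-ng-vertesi} describes the full mountain range of each twist-knot type, and in particular pins down the finitely many non-destabilizable (peak) representatives together with explicit front projections; every Legendrian twist knot is a stabilization of one of these. Hence it suffices to exhibit a length-$2$ unknotting surgery string on each peak front.

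The construction on a peak front would exploit the Whitehead-double structure: a twist knot is a twisted double of the unknot, so the two parallel copies of the companion run antiparallel and meet in a clasp next to a band of half-twists. Because the two clasp strands are oppositely oriented, they form a \emph{basic, compatibly-oriented} $0$-tangle, and I would perform one oriented Legendrian surgery there, apply a Legendrian isotopy to simplify the resulting two-component link, and then perform a second oriented surgery that undoes the remaining clasp; the half-twist band then survives only on a topological unknot, so $\Lambda_0$ is a Legendrian unknot. Together with the lower bound and Remark~\ref{rem:basics}(3) this yields $\lso(\Lambda) = 2$. The main obstacle is the diagrammatic verification that this two-surgery recipe genuinely succeeds on every peak of every twist-knot mountain range: one must check that a compatibly-oriented $0$-tangle is actually present at the clasp, that the intermediate isotopy really untangles the half-twist band, and that the final front is an honest unknot rather than merely a lower-crossing knot. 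Since \cite{etnyre-ng-vertesi} makes the peaks explicit, this is a finite check best recorded in a figure, and it is where essentially all of the work lies.
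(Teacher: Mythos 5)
Your proposal is correct and follows essentially the same route as the paper: the lower bound $\sigma_0(\Lambda) \geq 2$ from non-triviality plus parity (which is exactly what Remark~\ref{rem:basics}(3) packages), reduction to the maximal Thurston--Bennequin representatives via Remark~\ref{rem:basics}(4) and the Etnyre--Ng--V\'ertesi classification, and then two surgeries at the clasp. The diagrammatic verification you defer is precisely what the paper supplies in Figures~\ref{fig:gen-neg-twist}, \ref{fig:gen-pos-twist}, and \ref{fig:twist-surg}; note that the check is carried out uniformly in $m$ on the general peak forms (a clasp plus a box of $S$/$Z$ or $X$ half-twists), with the surgery positions depending on the parity of $m$, rather than as a separate finite check for each knot type.
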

  
  \begin{proof}  
  Etnyre, Ng, and V\'ertesi,  have  classified  all
  Legendrian twist knots, \cite{etnyre-ng-vertesi}.  In particular, every Legendrian knot $\Lambda$ with maximal Thurston-Bennequin
   invariant that is topologically $K_m$, for some $m \leq -2$, is Legendrian isotopic to one of the form
in Figure~\ref{fig:gen-neg-twist}, and  every Legendrian knot $\Lambda$ with maximal Thurston-Bennequin
 invariant that is topologically $K_m$, for $m \geq 1$ with maximal Thurston-Bennequin invariant
 is Legendrian isotopic to one of the form in Figure~\ref{fig:gen-pos-twist}. \footnote{We omit $m = 0, -1$ since
those corresponds to the unknot.}  
 Every Legendrian knot $\Lambda$ that is topologically a non-trivial twist knot
is obtained by stabilization of one of these with maximal Thurston-Bennequin invariant.
By Remark~\ref{rem:basics},  it suffices to show for any  Legendrian knot $\Lambda^+$ that is topologically 
  a non-trivial twist knot and has maximal Thurston-Bennequin invariant, $\lso(\Lambda^+) = 2$.
For $\Lambda^+$,  we can do  the two unknotting surgeries near the ``clasp".  The sign of
  the crossings in the clasp will depend on whether $m$ is even or odd:    
  Figure~\ref{fig:twist-surg} shows the positions
  of two surgeries that result in an unknot.
  \end{proof}

   \begin{figure}
 \centerline{\includegraphics[height=.7in]{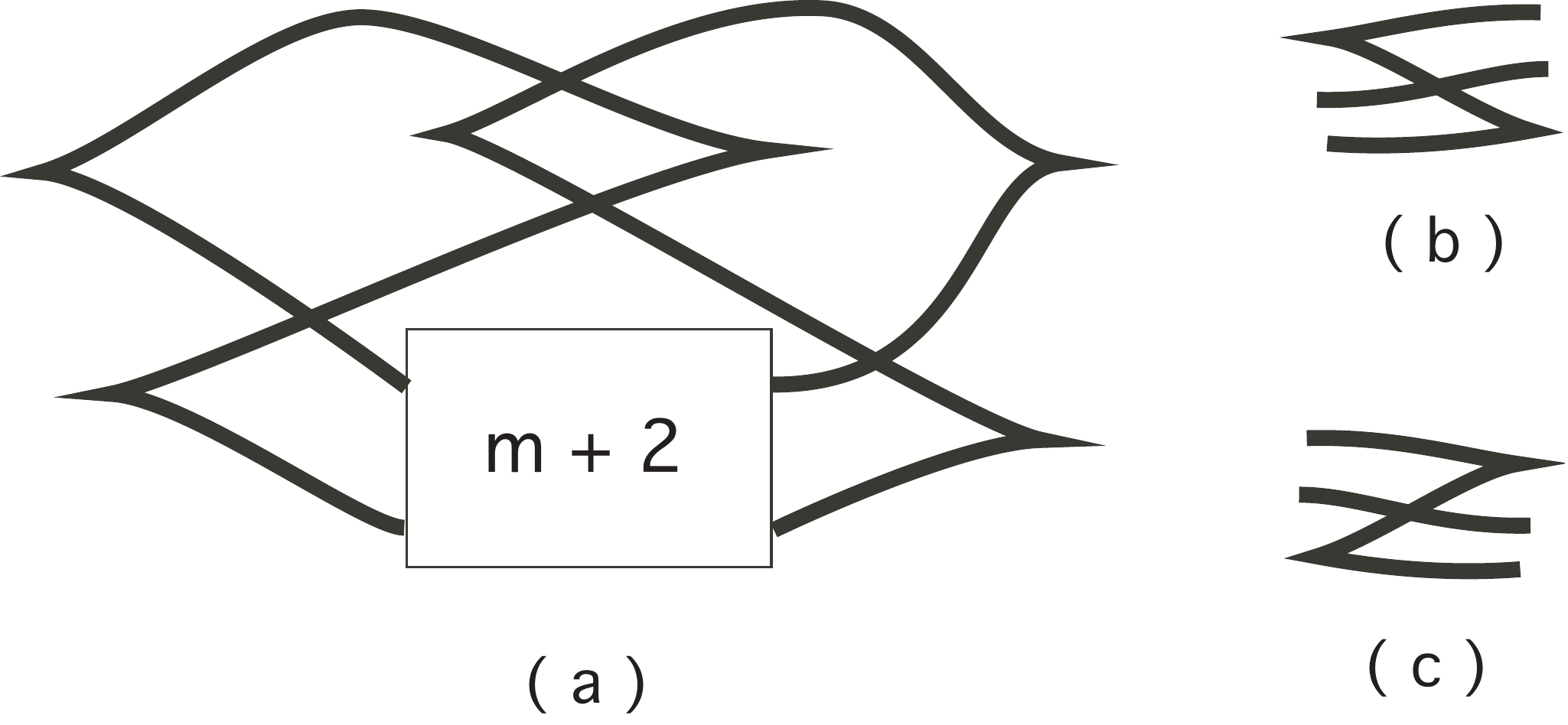}}
 \caption{Any Legendrian knot that is topologically a negative twist knot, $K_m$ with $m \leq -2$, and has maximal Thurston-Bennequin
  invariant 
  is Legendrian isotopic to one of the form in 
  (a) where the box contains $|m+2|$ half twists, each of form $S$ as shown in (b) or of form $Z$ as
  shown in (c).}
  \label{fig:gen-neg-twist}
\end{figure}
 \begin{figure}
 \centerline{\includegraphics[height=.7in]{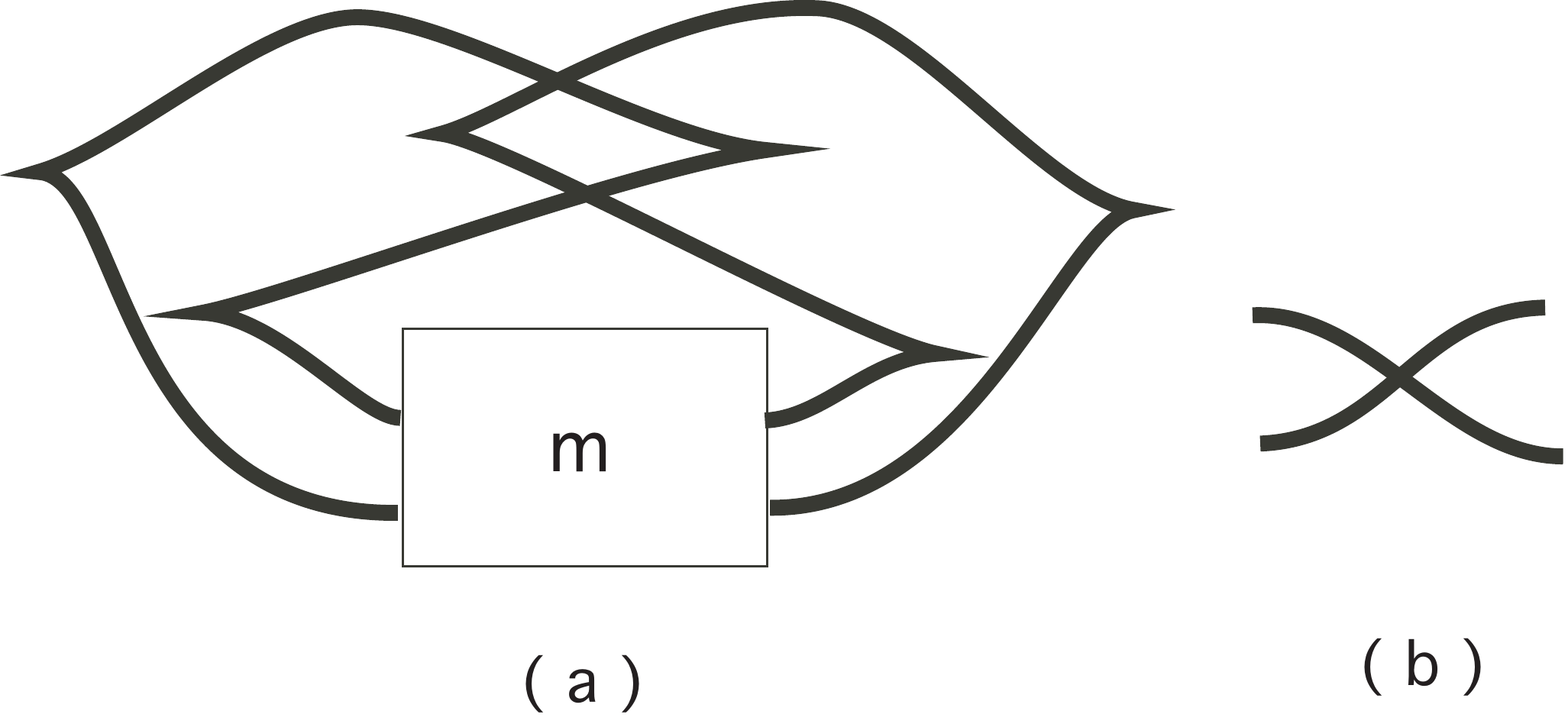}}
  \caption{Any Legendrian knot that is topologically a positive twist knot, $K_m$ with $m \geq 0$, and has maximal Thurston-Bennequin
  invariant 
  is Legendrian isotopic to one of the form in 
  (a) where the box contains $m$ half twists, each of form $X$ as shown in (b).}
   \label{fig:gen-pos-twist}
\end{figure}
   \begin{figure}
 \centerline{\includegraphics[height=1.8in]{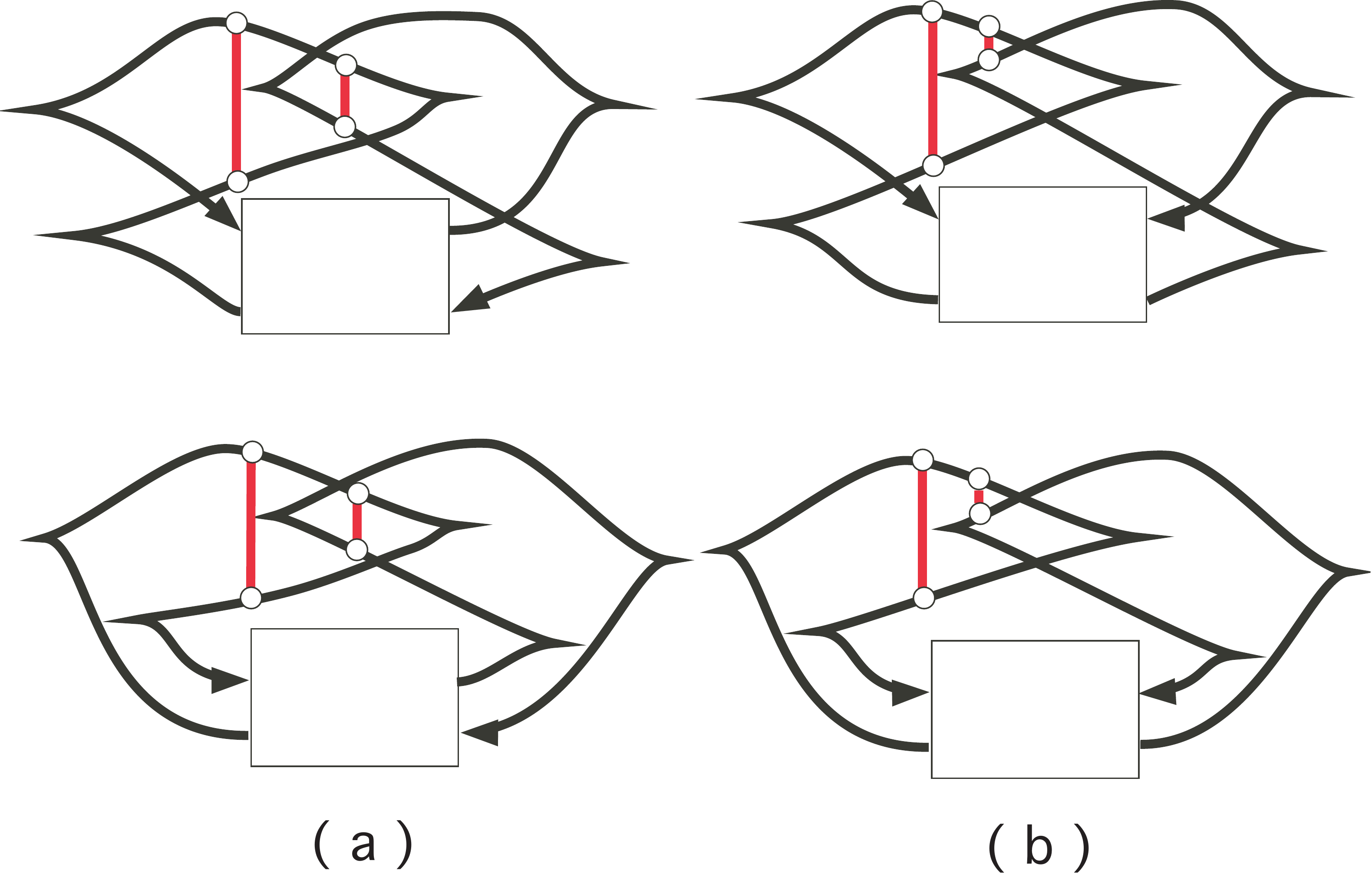}}
	  \caption{For a Legendrian knot with maximal Thurston-Bennequin invariant that is topologically $K_m$,
	  (a) gives the surgery points when $m$ is even, and  (b) gives the surgery points when $m$ is odd.}
  \label{fig:twist-surg}
\end{figure}

   \end{subsection}

 \begin{subsection}{Legendrian Torus Links}

A torus link is a link that can be smoothly isotoped so that it lies on the surface of an unknotted torus in $\rr^3$.  Every torus
knot can be specified by a pair $(p,q)$ of coprime integers:  we will use the convention that 
the $(p,q)$-torus knot, $T(p,q)$, winds $p$ times around a meridonal curve of the torus and $q$ times in the longitudinal direction.
See, for example, \cite{adams}.
In fact, $T(p, q)$ is equivalent to  $T(q, p)$ and to $T(-p, -q)$.  So we will
 always assume that $|p| > q > 0$; in addition we will assume $q > 1$ since we are interested in non-trivial torus knots.     
For $j\geq 2$,  $T(jp, jq)$, with $|p| > q > 1$ and $\gcd(p,q) = 1$, will be a $j$-component link where each component is
a $T(p,q)$ torus link.  We will only consider torus links of non-trivial components.

  \begin{thm}\label{thm:torus} %
  If $\Lambda$ is a $j$-component  Legendrian link that is topologically the $(jp, jq)$-torus link, $|p| > q > 1$,
  then $\lso(\Lambda) =  (|jp|-1)(jq-1)$.
  \end{thm}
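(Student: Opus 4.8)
The plan is to prove the two inequalities $\lso(\Lambda) \ge (|jp|-1)(jq-1)$ and $\lso(\Lambda) \le (|jp|-1)(jq-1)$ separately. By the classification of Legendrian torus knots of Etnyre--Honda \cite{etnyre-honda:knots} and its extension to links by Dalton \cite{dalton}, every Legendrian representative of $T(jp,jq)$ is a stabilization of one of finitely many maximal Thurston--Bennequin representatives, so by Remark~\ref{rem:basics}(4) it suffices to prove the \emph{upper} bound for these maximal representatives. I would split the whole argument according to the sign of $p$, since the positive torus links ($p>0$) realize their classical lower bound while the negative ones ($p<0$) do not.

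For the lower bound I would first treat a maximal Thurston--Bennequin representative $\Lambda^+$ in the positive case. Dalton's classification gives $tb(\Lambda^+) = (jp)(jq) - jp - jq$ and allows one to choose the peak representative with $r(\Lambda^+)=0$, so that $tb(\Lambda^+) + |r(\Lambda^+)| + 1 = (jp-1)(jq-1)$; Lemma~\ref{lem:tb-lb} then gives $\lso(\Lambda^+) \ge (jp-1)(jq-1)$. Combined with the upper bound below, this forces $\lso(\Lambda^+) = tb(\Lambda^+)+|r(\Lambda^+)|+1$, and the link form of the Slice--Bennequin inequality (the analogue of Corollary~\ref{cor:realize-g4}), sandwiched between this and Lemma~\ref{lem:g4-lb}, then identifies $2g_4(T(jp,jq)) + (j-1) = (jp-1)(jq-1)$. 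This is exactly the Milnor-type genus computation of Kronheimer--Mrowka \cite{kronheimer-mrowka}, and it is \emph{not} circular because it uses only the elementary bound of Lemma~\ref{lem:tb-lb} at one representative. For every other representative --- the stabilizations in the positive case and all representatives in the negative case --- the classical bound is too weak, so I would instead invoke Lemma~\ref{lem:g4-lb}. Since $g_4$ is a smooth invariant, unchanged by stabilization and by the topological mirror, we have $2g_4 + (j-1) = (|jp|-1)(jq-1)$ in all cases, yielding $\lso(\Lambda) \ge (|jp|-1)(jq-1)$ uniformly.

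For the upper bound on a positive maximal representative $\Lambda^+$ I would exhibit an explicit surgery string that decreases $tb$ by $1$ at each step (Lemma~\ref{lem:parity+tb}) while keeping $r=0$, terminating exactly at the maximal-$tb$ Legendrian unknot ($tb=-1$, $r=0$). Such a string has length $tb(\Lambda^+)-(-1) = (jp-1)(jq-1)$, which is strictly more efficient than the generic bound of Proposition~\ref{prop:unknottable}: rather than resolving every crossing and then merging the resulting unknots, the surgeries are organized along the diagonal clasp pattern of the standard torus front so that the surface they trace out in $B^4$ is the minimal-genus fiber surface, of Euler characteristic $jp+jq-(jp)(jq)=1-(jp-1)(jq-1)$. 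The negative case needs a separate construction: there the maximal representative has very negative $tb$, so the string cannot end at the maximal unknot, and one must instead realize a minimal-genus surface for the mirror link directly by the specified oriented Legendrian surgeries. In both signs the stabilized representatives then satisfy $\lso \le \lso(\Lambda^+) = (|jp|-1)(jq-1)$ by Remark~\ref{rem:basics}(4), completing the upper bound.

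The main obstacle is the upper-bound construction, and within it the negative case. For positive torus links the surgeries can be read off the $r=0$ peak front and the count is pinned down by the $tb$-drop bookkeeping, so the work is mainly to draw the surgery pattern and to verify, via Legendrian Reidemeister moves, that no surgery is wasted. For negative torus links there is no classical quantity to match, so one must produce by hand, on Dalton's maximal fronts, a length-$(|jp|-1)(jq-1)$ surgery string to a (necessarily heavily stabilized) unknot; checking that this minimal number is achievable --- equivalently, that the minimal-genus surface of the mirror link is built by exactly these oriented surgeries --- is the delicate step. Confirming the precise values of $tb$ and the peak rotation numbers supplied by the Etnyre--Honda and Dalton classifications is a routine but necessary bookkeeping input throughout.
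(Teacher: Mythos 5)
Your architecture is the same as the paper's: reduce to maximal-$tb$ representatives via the Etnyre--Honda/Dalton classifications and Remark~\ref{rem:basics}(4), get the lower bound at the positive peak from Lemma~\ref{lem:tb-lb}, match it with an explicit surgery string, and then feed the resulting $g_4$ computation (which is mirror- and stabilization-invariant) into Lemma~\ref{lem:g4-lb} to handle stabilized representatives and the negative case. The genuine gap is the step you explicitly defer: the upper bound for negative torus links. Saying that ``one must produce by hand, on Dalton's maximal fronts, a length-$(|jp|-1)(jq-1)$ surgery string'' is naming the problem, not solving it, and for $p<0$ this construction \emph{is} the content of the theorem, since no classical bound pins it down. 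The paper does it concretely: writing $|p| = mq + e$ with $0 < e < q$ and $m = 1 + n_1 + n_2$, each maximal representative $\Lambda^+_{(n_1,n_2)}$ has the standard form of Figure~\ref{fig:neg-torus-unknot}, and one performs a surgery to the right of every crossing in the $L$, $R$, and $B$ blocks (contributing $\frac12 q(q-1) + \frac12 q(q-1) + (n_1+n_2)q(q-1)$ surgeries) and $(q-1)$ surgeries between consecutive $Z$'s in the $e$-string (contributing $(e-1)(q-1)$), for a total of $(1+n_1+n_2)q(q-1) + (e-1)(q-1) = (|p|-1)(q-1)$; the same pattern, run on Dalton's fronts, gives $(j|p|-1)(jq-1)$ for links. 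Until you exhibit such a pattern and verify the count, the theorem is unproved for negative torus links. (Also, your phrasing about realizing ``a minimal-genus surface for the mirror link'' is slightly off target: the surgeries must be performed on the negative fronts themselves; the mirror only enters through the equality $g_4(T(jp,jq)) = g_4(T(-jp,jq))$ used in the lower bound.)

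A secondary issue is your lower bound when $j \geq 2$. You invoke ``the link form of the Slice--Bennequin inequality (the analogue of Corollary~\ref{cor:realize-g4})'', but the paper states Inequality~(\ref{ineq:sl-b}) and Corollary~\ref{cor:realize-g4} for knots only, so there is no such corollary available to cite. A link version of Slice--Bennequin does exist in the literature, so your route can be repaired by an external citation; but the paper's route is different and stays within what it has established: it computes $2g_4(T(jp,jq)) + (j-1) = (j|p|-1)(jq-1)$ directly from Nakamura's formula \cite{nakamura} for the smooth $4$-ball genus of positive links, applied to the positive diagram underlying Figure~\ref{fig:5-3-torus-unknot} (see Remark~\ref{rem:g4-torus-links}), and then transports this to $p<0$ by mirror invariance of $g_4$. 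Either input works, but as written your link-level lower bound rests on a result the paper neither states nor proves.
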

  
  \begin{proof}  First consider the case where $\Lambda$  is topologically a positive torus knot, $T(p, q)$ with $p > 0$. As shown by
 Etnyre and Honda, \cite{etnyre-honda:knots},  
 the list of different Legendrian representations of a  positive torus knot can be represented as a ``tree" in parallel
 to the tree of unknots shown in Figure~\ref{fig:unknots}.  Namely, for 
 fixed $p > q>1$,  there is a unique Legendrian knot $\Lambda^+$ that is topologically $T(p,q)$ with
 maximal Thurston-Bennequin invariant $tb(\Lambda^+) = pq-p-q$ and $r (\Lambda^+)= 0$; any Legendrian knot $\Lambda$ that is topologically
 $T(p,q)$  is obtained
 by stabilizations of $\Lambda^+$.
   By Remark~\ref{rem:basics}, it suffices
  to  show that if $\Lambda^+$ is a Legendrian knot that is topologically
  $T(p,q)$ and has maximal Thurston-Bennequin invariant, then $\lso(\Lambda^+) = (p-1)(q-1)$.
  By Lemma~\ref{lem:tb-lb}, 
  $$tb(\Lambda^+) + |r(\Lambda^+)| + 1 = (p-1)(q-1) \leq \lso(\Lambda).
  $$ 
  In fact, it is possible to unknot with $(p-1)(q-1)$ surgeries.  Starting from the left most string of crossings,
  do $(q-1)$ successive surgeries as illustrated for the $(5,3)$-torus knot in Figure~\ref{fig:5-3-torus-unknot}.  In general,
  this take the $(p,q)$-torus knot to the $(p-1,q)$-torus link.  Repeating this $(p-1)$ times results in the
  $(1,q)$-torus knot, which is an unknot.
   \footnote{By Corollary~\ref{cor:realize-g4}, we can now reprove the Milnor conjecture as mentioned in Corollary~\ref{cor:Milnor}.}

  The above proof easily generalizes to positive torus links of non-trivial components.  Dalton showed in \cite{dalton} that 
  there is a unique Legendrian link $\Lambda^+$ that is topologically $T(jp,jq)$ with
 maximal Thurston-Bennequin invariant $tb(\Lambda^+) = jpjq-jp-jq$.  The construction of this
 one exactly parallels the construction in Figure~\ref{fig:5-3-torus-unknot}, and so  the same pattern of $(jp-1)(jq-1)$ surgeries of 
 will produce a Legendrian unknot.

   \begin{figure}
 \centerline{\includegraphics[height=1.5in]{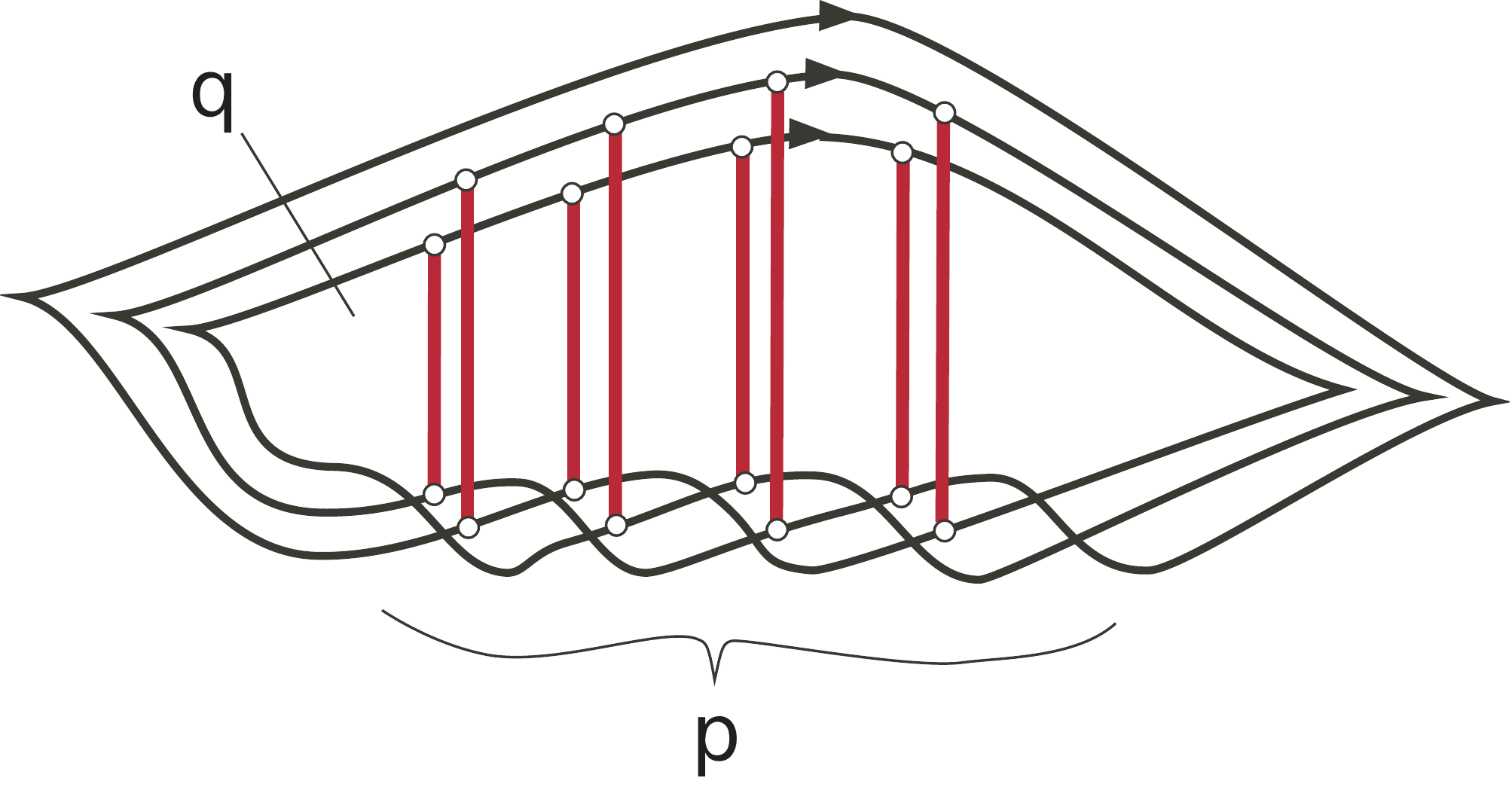}}
  \caption{The Legendrian $(5,3)$-torus knot with maximal tb invariant.  The general,
  positive Legendrian $(p,q)$-torus knot with maximal Thurston-Bennequin invariant is constructed using
  $q$ strands and a length $p$ string of crossings.  Shown are the $(p-1)(q-1)$ oriented Legendrian surgeries that unknot the Legendrian positive
  $(p,q)$-torus knot with maximal tb.}
  \label{fig:5-3-torus-unknot}
\end{figure}

 Next consider the case where $\Lambda$ is topologically a negative torus knot, $T(p, q)$ with $p < 0$. In this case,
 Etnyre and Honda have shown that 
the list of different Legendrian representations of a  negative torus knots, $T(p, q)$ for $p < 0$ and $|p| > q > 1$,
    can be represented as a ``mountain range" where the number of representatives with maximal Thurston-Bennequin
    invariant depends on the divisibility of $p$ by $q$.  Namely,  if
    $|p| = m q + e$, $0 < e < q$,      then  
    there will be $2m$
     Legendrian representatives of $T(p, q)$ 
     with maximal Thurston-Bennequin
    invariant of $pq < 0$.   Half of these different representatives  with maximal
    Thurston-Bennequin invariant are obtained by writing $m =  1 + n_1 + n_2$,  where $ n_1, n_2 \geq 0$,
    and  then $\Lambda_{(n_1, n_2)}^+$ is constructed using the form shown in Figure~\ref{fig:neg-torus-unknot} with $n_1$ and $n_2$ copies of the tangle
    $B$ inserted as indicated:  $r(\Lambda_{(n_1, n_2)}^+)  = q (n_2 - n_1) + e$.
    The other $m$ Legendrian versions of $T(p,q)$ with maximal Thurston-Bennequin invariant 
     are obtained by reversing the orientation.  For negative torus knots, Lemma~\ref{lem:tb-lb} will not be a useful lower bound.  
    However, since the calculation of the $4$-ball genus is the same for both the knot and its mirror,
    the calculations in the positive torus knot case 
    and Corolloary~\ref{cor:realize-g4}, (or \cite{kronheimer-mrowka}), show that for a negative torus knot $T(p,q)$,  $2g_4(T(p, q)) =  (|p| - 1)(q-1)$.  Thus, by Lemma~\ref{lem:g4-lb}
  $$(|p| - 1)(q-1) \leq \lso(\Lambda).$$ In fact, it is possible to arrive at an unknot with $(|p| - 1)(q-1)$ surgeries. 
   Figure~\ref{fig:neg-torus-unknot} shows the claimed surgeries: a surgery is done to the right of all crossings
   in the $L$, $R$, and $B$ regions (contributing $\frac12 q(q-1) +  \frac12 q(q-1) + (n_1 + n_2)q(q-1)$ surgeries), 
 and between each ``$Z$" in the $e$ string one does $(q-1)$ successive surgeries (contributing $(e-1)(q-1)$ surgeries).  Thus the total
 number of surgeries is:
 $$(1 + n_1 + n_2) q (q-1) + (e-1)(q-1) =  (mq + e - 1)(q-1) = (|p| - 1)(q-1).$$
    \begin{figure}
 \centerline{\includegraphics[height=2in]{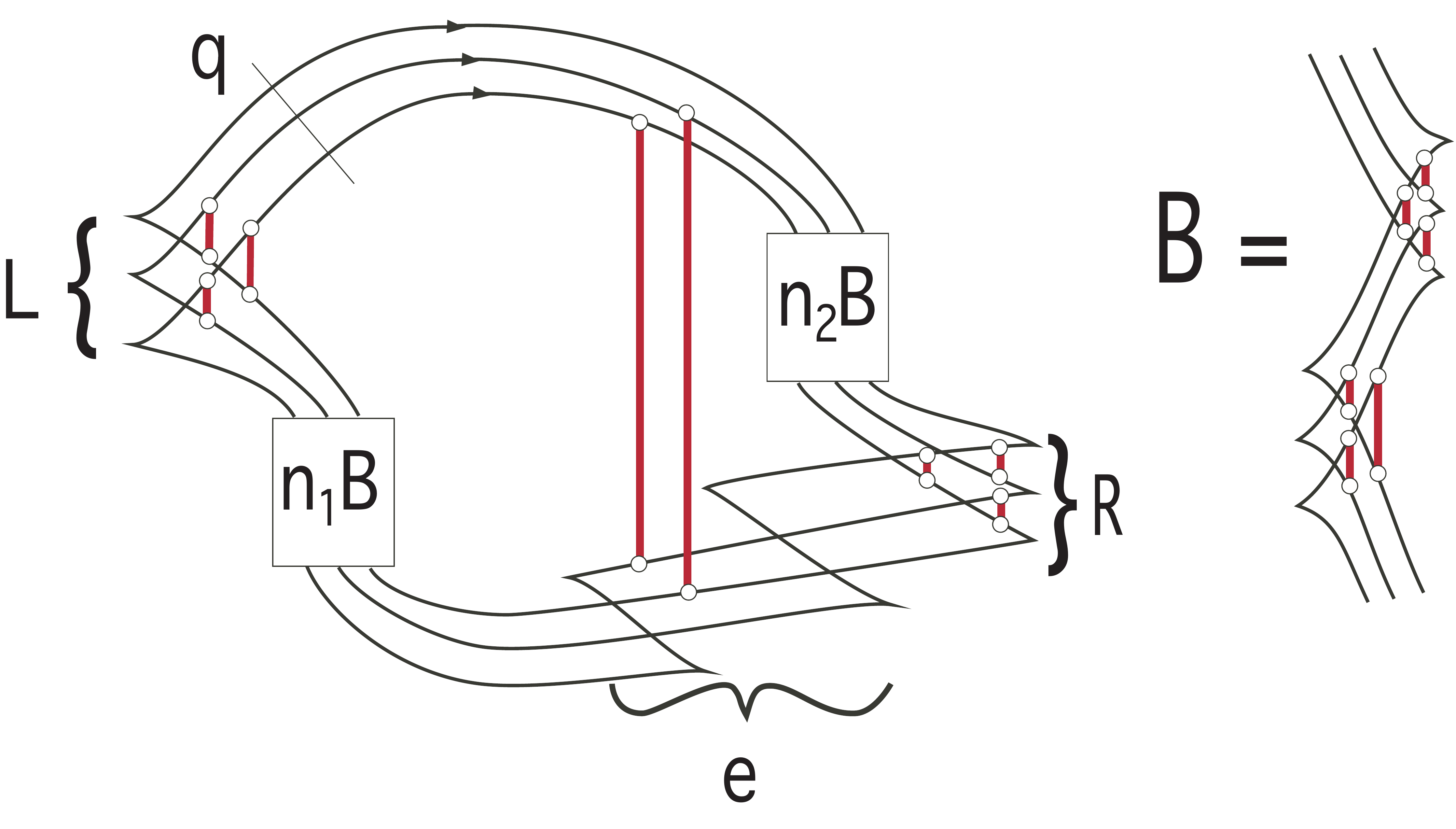}}
  \caption{The $(|p|-1)(q-1)$ oriented Legendrian surgeries that unknot a Legendrian negative
  $(p,q)$-torus knot with maximal Thurston-Bennequin invariant.}
  \label{fig:neg-torus-unknot}
\end{figure}
  
  The above proof easily generalizes to negative torus links.  It follows from \cite{nakamura} that 
  $g_4(T(jp, jq)) + (j-1) =  (j|p|-1)(jq-1)$; see Remark~\ref{rem:g4-torus-links}. Dalton showed in \cite{dalton} that 
  there are $2m$ Legendrian links $\Lambda^+$ that are topologically $T(jp,jq)$ with
 maximal Thurston-Bennequin invariant, and all Legendrians that are topologically $T(jp, jq)$ are obtained by
 stabilizations of one of these.    Each of these with maximal Thurston-Bennequin invariant  
 can be constructed as in Figure~\ref{fig:neg-torus-unknot}, 
 and so  the same pattern of $(j|p|-1)(jq-1)$ surgeries 
 will produce a Legendrian unknot.
 \end{proof}
  
  \begin{rem} \label{rem:g4-torus-links} Nakamura's formula, \cite{nakamura}, for the smooth $4$-ball genus of a $j$-component positive link $L$ is that 
  $$2 g_4(L) = 2 - j - s(D) + c(D),$$
  where $s(D)$ is the number of Seifert circles and $c(D)$ is the number of crossings in a non-split positive diagram $D$ for $L$.
  It is straightforward to see that when $L$ is the positive torus link $T(jp, jq)$, using the diagram $D$ corresponding to Figure~\ref{fig:5-3-torus-unknot},
  $s(D) = jq$ and $c(D) = jp(jq-1)$.  So, 
  $$2 g_4(T(jp, jq)) = 2 - j - jq + jp(jq-1) = (1-j) + (jp-1) (jq-1).$$
  Thus for any Legendrian link $\Lambda$ that is topologically $T(jp, jq)$, for either $p$ positive or negative, 
  $$2g_4(T(jp, jq)) + (j-1) = \sun(\Lambda).$$
  \end{rem}
 
 \end{subsection}

  \begin{subsection}{Positive, Legendrian Rational Links}  
  
  \begin{defn}\label{defn:pos-rat} Given a vector of integers $(c_n, \dots, c_2, c_1)$, where $c_n \geq 2$,  
  and $n \geq 2$ implies $c_i \geq 1$ for $i = 1, \dots, n-1$,
     we construct the {\bf rational Legendrian link}
  $\Lambda(c_n, \dots, c_2, c_1)$ to be the Legendrian numerator closure of the Legendrian tangle
  $(c_n, \dots, c_2, c_1)$ as demonstrated in Figure~\ref{fig:rat-gen}; 
  see also \cite{adams}, \cite{traynor:strat}, \cite{schneider}.     The rational Legendrian link $\Lambda(c_n, \dots, c_2, c_1)$ is {\bf positive} if all crossings are positive.  
     
     \end{defn}
     
     This Legendrian link $\Lambda(c_n, \dots, c_1)$ is topologically the numerator closure of the rational tangle associated to the rational number $q$  
   with continued fraction expansion $q = c_1 + 1/(c_2 + 1/(c_3 + \dots ))$; \cite{conway}.

       \begin{figure}
 \centerline{\includegraphics[height=2in]{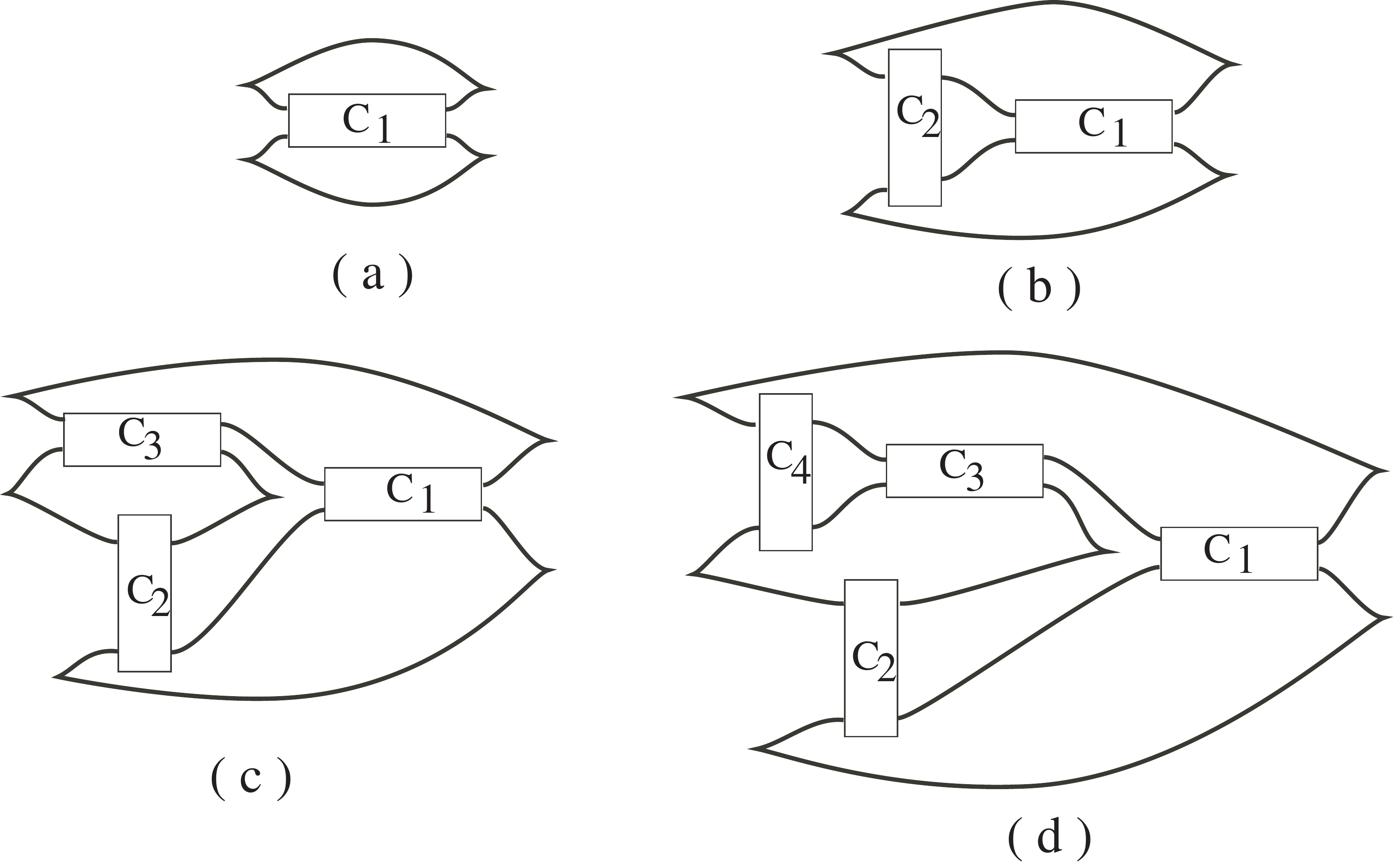}}
  \caption{(a) The general form of  $\Lambda(c_0)$, (b) the general form of $\Lambda(c_1, c_0)$, (c) the general form of $\Lambda(c_2, c_1, c_0)$,
  the general form of $\Lambda(c_3, c_2, c_1, c_0)$.}
  \label{fig:rat-gen}
\end{figure}

The ``even" entries  $c_2, c_4, \dots$ of the vector $(c_n, \dots, c_2, c_1)$  denote the strings of vertical crossings.  It is straightforward
to verify that  the parity of these
vertical entries determine when $\Lambda(c_n, \dots, c_1)$ is a positive link:

\begin{lem}   \label{lem:pos}
\begin{enumerate}
\item When $n$ is odd, there exists an orientation on the components of $\Lambda(c_n, \dots, c_1)$ so that it
is a positive link if and only if  
 $c_i$ is even,  for all $i$ even. 
Moreover, 
$\Lambda(c_n, \dots, c_1)$ is a knot when $\sum_{i \text{ odd}} c_i$ is odd.
\item When $n$ is even, there exists an orientation on the components of $\Lambda(c_n, \dots, c_1)$ so  it
is a positive link if and only if  $c_n$ is odd and $c_{n-2}, c_{n-4}, \dots, c_2$ are  all even. 
Moreover, 
 $\Lambda(c_n, \dots, c_1)$ is a knot when $\sum_{i \text{ odd}} c_i$ is even. 
\end{enumerate}
\end{lem}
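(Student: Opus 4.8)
The plan is to treat the two assertions separately: first the characterization of when the components admit an orientation making every crossing positive, and then the count of components. Both reduce to elementary parity bookkeeping along the twist regions of the tangle $(c_n,\dots,c_1)$, in which the odd-indexed entries $c_1,c_3,\dots$ are the horizontal twist regions and the even-indexed entries $c_2,c_4,\dots$ are the vertical ones (Figure~\ref{fig:rat-gen}).

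For the positivity statement I would first record the local rule coming from the sign convention of Figure~\ref{fig:positive_negative_crossings}: within a single twist region every crossing has the same sign, and that common sign is determined by the relative orientation of the two participating strands. Since the Legendrian notion of a positive crossing (both strands on the same side of the vertical line through the crossing) is not invariant under the $90^\circ$ rotation relating horizontal and vertical twists, a horizontal region is all-positive exactly when its two strands are co-oriented, whereas a vertical region is all-positive exactly when its two strands are oppositely oriented (the precise pairing being fixed once and for all by the handedness built into Figure~\ref{fig:rat-gen}). The key propagation fact is that a strand passing through a twist region of $c_i$ crossings exchanges its role with the other strand precisely when $c_i$ is odd. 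Tracking a chosen orientation from one region to the next, I would show that maintaining the opposite-orientation condition demanded at every vertical region forces each interior vertical entry to be even; the two capping arcs of the numerator closure then supply the remaining boundary condition, which is vacuous on $c_n$ when the top region is horizontal ($n$ odd) but forces $c_n$ to be odd when the top region is vertical ($n$ even). Conversely, once these parities hold one orients the components by propagating the forced choice and checks directly that every crossing is positive, which gives both directions of the ``if and only if.''

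For the component count I would use that $\Lambda(c_n,\dots,c_1)$ is topologically the numerator closure of the rational number $q=c_1+1/(c_2+1/(c_3+\cdots))$ (as recorded above, \cite{conway}), and that such a numerator closure is a knot exactly when the numerator $P$ of $q=P/Q$ in lowest terms is odd -- a standard $2$-bridge fact that also follows by tracing the two strands through the closure. Writing $q=A_1/A_2$, the convergent numerators satisfy the backward recurrence $A_k=c_kA_{k+1}+A_{k+2}$ with $A_{n+1}=1$ and $A_{n+2}=0$. Reducing this recurrence modulo $2$ and substituting the parities dictated by the positivity hypothesis (all interior vertical entries even, together with $c_n$ odd in the even case) collapses it to a telescoping sum, yielding $A_1\equiv\sum_{i\text{ odd}}c_i\pmod 2$ when $n$ is odd and $A_1\equiv 1+\sum_{i\text{ odd}}c_i\pmod 2$ when $n$ is even. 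Hence $P=A_1$ is odd -- equivalently $\Lambda$ is a knot -- precisely when $\sum_{i\text{ odd}}c_i$ is odd for $n$ odd, respectively even for $n$ even, as claimed. (One can sanity-check this against $\Lambda(3,2)=5_2$ and $\Lambda(3,2,2)=7_5$, both knots.)

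The main obstacle is the orientation bookkeeping for positivity, specifically pinning down the exact co-oriented versus oppositely-oriented rule at horizontal versus vertical regions from the cusped Legendrian front and correctly handling the two closure arcs, since the cusps in the vertical regions make the ``same side of the vertical line'' test less transparent than in the smooth picture. Getting this boundary analysis right is exactly what produces the asymmetry between the $n$ odd and $n$ even cases -- the extra requirement that $c_n$ be odd -- and hence underlies both the positivity characterization and, through the mod-$2$ recurrence, the knot criterion.
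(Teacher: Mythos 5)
First, a point of comparison that matters here: the paper never actually proves this lemma. It is stated immediately after the sentence ``It is straightforward to verify that the parity of these vertical entries determine when $\Lambda(c_n,\dots,c_1)$ is a positive link,'' and no proof follows. So you are supplying an argument rather than paralleling one, and the argument you outline is correct. Your positivity analysis is essentially the direct verification the authors allude to: with the convention of Figure~\ref{fig:positive_negative_crossings}, a horizontal twist region is all-positive exactly when its two strands are co-oriented, a vertical region exactly when they are oppositely oriented, strands exchange ends of a region exactly when $c_i$ is odd, and consistency of the forced orientations at the region junctions and the two closure arcs yields precisely the stated parities. (For instance, for $\Lambda(c_2,c_1)$ one checks by hand that an even $c_2$ forces the two strands of the horizontal region to run in opposite directions, which is the contradiction that makes $c_n$ odd necessary when $n$ is even.) Your phrase ``the top region'' should be matched against the actual geometry of Figure~\ref{fig:rat-gen} --- the distinguished region carrying the odd-parity requirement is the one built first in the tangle, not necessarily the topmost --- but this is a labeling issue, not a logical one.

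Where you genuinely diverge is the knot/link criterion. Instead of tracing the two bridges through the closure, you pass to the fraction $q=A_1/A_2$ and run the recurrence $A_k=c_kA_{k+1}+A_{k+2}$, $A_{n+1}=1$, $A_{n+2}=0$, modulo $2$, together with the standard fact that the numerator closure is a knot iff $A_1$ is odd. I checked this computation: under the positivity parities one gets $A_k\equiv 1\pmod 2$ for every even $k$, hence $A_1\equiv\sum_{i \text{ odd}}c_i$ when $n$ is odd and $A_1\equiv 1+\sum_{i \text{ odd}}c_i$ when $n$ is even, exactly as you claim. This buys a two-line calculation at the price of importing the 2-bridge fact; direct tracing is self-contained but fussier. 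One point you handle correctly and that deserves emphasis: the ``Moreover'' clauses are true only under the positivity hypothesis, so substituting those parities into the recurrence is essential, not cosmetic. Without them the statement fails: $\Lambda(1,1,1,1,1)$ has $n=5$ and $\sum_{i \text{ odd}}c_i=3$ odd, yet its fraction is $8/5$, so it is a two-component link. The only soft spot in your proposal is that the orientation bookkeeping for positivity is described rather than carried out, but since the paper itself offers nothing more, that is an acceptable level of detail.
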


The Legendrian surgery unknotting number of a positive link has a convenient formula in terms of the
``odd" entries, which correspond to the strings of horizontal crossings.  
There will be some differences in following formulas depending on whether $\Lambda$ is constructed from
an odd or an even length vector.  Define
$$p(n) = 
\begin{cases}
1, & n \text{ odd} \\
0, & n \text{ even};
\end{cases}
$$
$p(n)$ measures the parity of the ``length" of the vector $(c_n, \dots, c_1)$.
 
\begin{thm} \label{thm:pos-rat} If $\Lambda(c_n, \dots, c_2, c_1)$  is a positive, Legendrian rational link, then
$$\lso(\Lambda(c_n, \dots, c_2, c_1)) = \sum_{i \text{ odd }} c_{i}  - p(n).$$
\end{thm}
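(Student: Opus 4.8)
The plan is to prove the identity by establishing matching lower and upper bounds, both equal to $\sum_{i \text{ odd}} c_i - p(n)$. For the lower bound I would invoke Lemma~\ref{lem:tb-lb}, reducing the problem to computing the classical invariants directly from the standard front projection in Figure~\ref{fig:rat-gen} and checking that
$$tb(\Lambda(c_n, \dots, c_1)) + |r(\Lambda(c_n, \dots, c_1))| + 1 = \sum_{i \text{ odd}} c_i - p(n).$$
Since the link is positive, every crossing is positive, so $N = 0$ and $P = \sum_i c_i$; by Equation~(\ref{eqn:tb-r}) this gives $tb = \sum_i c_i - R$, where $R$ is the number of right cusps. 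First I would record that the horizontal (odd-indexed) twist regions contribute no cusps, whereas each vertical (even-indexed) twist region, drawn Legendrian-ly, forces cusps. Combining the cusp count with the rotation number, computed for the positive orientation furnished by Lemma~\ref{lem:pos}, the target identity is equivalent to
$$R - |r| = \sum_{i \text{ even}} c_i + 1 + p(n),$$
which I would verify by direct bookkeeping from the figure (the case $n=1$, the $(2,c_1)$ torus link, already gives the sanity check $R - |r| = 2$). The parity hypotheses in Lemma~\ref{lem:pos} enter precisely here, since they fix the crossing signs and the up/down types of the cusps consistently across the numerator closure.

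For the upper bound I would construct an explicit unknotting surgery string of length $\sum_{i \text{ odd}} c_i - p(n)$, in direct analogy with the torus-knot construction of Figure~\ref{fig:5-3-torus-unknot}. The mechanism is the one used in the proof of Proposition~\ref{prop:unknottable}: a surgery performed just to the right of a positive crossing in a horizontal twist region deletes that crossing after Legendrian Reidemeister moves (Figures~\ref{fig:neg-cross} and \ref{fig:positive-cross}). The key structural observation is that only the horizontal twists are essential for the numerator closure, while the vertical twist regions, whose crossing numbers $c_i$ are forced to be even by Lemma~\ref{lem:pos}, can be undone by Legendrian isotopy once the adjacent horizontal twists have been dissolved. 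Thus each horizontal twist region of $c_i$ crossings is removed using $c_i$ surgeries, the vertical regions cost no surgeries, and the total telescopes to $\sum_{i \text{ odd}} c_i$. I would organize this as an induction on $n$, showing that dissolving the outermost region strictly shortens the defining vector and lands on another positive rational link so that the induction hypothesis applies; the single subtraction of $p(n)$ arises because when $n$ is odd the outermost region is horizontal, and its final reduction already produces a Legendrian unknot — exactly as the $(1,q)$-tangle is unknotted in the torus case — so one surgery there is saved.

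The main obstacle is the invariant bookkeeping in the lower-bound step: getting the cusp count and the rotation number exactly right for the Legendrian vertical twist regions, and confirming that the orientation supplied by Lemma~\ref{lem:pos} makes every crossing positive and pins down the up/down cusp types, is where all the parity casework (even versus odd $n$, with the induced constraints on the $c_i$) actually lives. A secondary difficulty is verifying that the region-by-region surgery reduction really returns a positive rational link at each stage, so that the induction closes rather than drifting outside the family; making precise the isotopies that clear away a vertical region is the fiddly part, but it is modeled closely on Figures~\ref{fig:positive-cross} and \ref{fig:5-3-torus-unknot}. Once both bounds are in hand, equality of the computed lower bound and the constructed upper bound yields the stated formula.
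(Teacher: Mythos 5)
Your lower bound is exactly the paper's: you compute $r=0$ and the cusp count $R=\sum_{i \text{ even}}c_i+1+p(n)$ from the front of Figure~\ref{fig:rat-gen}, hence $tb=\sum_{i\text{ odd}}c_i-1-p(n)$, and quote Lemma~\ref{lem:tb-lb}; that is word for word what the paper does. The divergence is the upper bound. The paper gives a single, non-inductive picture (Figure~\ref{fig:pos-rat-unknot}): $c_i-1$ surgeries in \emph{each} horizontal region \emph{plus one surgery in each vertical region}, and checks directly that the result is an unknot. You instead spend $c_i$ surgeries per horizontal region, zero on vertical regions, and organize this as an induction on $n$. The totals agree, but your induction does not close as stated, and this is a genuine gap rather than a cosmetic one. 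First, you misquote Lemma~\ref{lem:pos}: when $n$ is even the outermost vertical entry $c_n$ is forced to be \emph{odd}, not even. Second, ``dissolving the outermost region'' cannot be the inductive step. If the region you peel is the deep end of the vector, the truncation $(c_{n-1},\dots,c_1)$ has even length and its new outermost entry $c_{n-1}$ is \emph{even} (by positivity of the original link), whereas Lemma~\ref{lem:pos}(2) demands it be odd; so the truncation is not a positive rational link and the induction hypothesis does not apply. If instead you peel the horizontal region at the closure, the leftover vector $(c_n,\dots,c_2)$ has a vertical region sitting in the slot the construction reserves for a horizontal one, so it is not of the form $\Lambda(\cdot)$ at all; and when $n$ is even you cannot begin with the outermost region anyway, since it is vertical and, by your own mechanism, can only be removed \emph{after} its neighboring horizontal twists are gone. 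This is precisely the ``drifting outside the family'' failure you flagged but did not resolve.

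The repair is to peel \emph{two} regions at a time from the closure end: remove the horizontal region $c_1$ with $c_1$ surgeries, after which the adjacent vertical region $c_2$ is capped off and disappears topologically (a continued-fraction check: the numerator closure of the tangle $(c_n,\dots,c_2,0)$ is the same two-bridge link as that of $(c_n,\dots,c_3)$), landing on a positive rational link of length $n-2$ whose parities do satisfy Lemma~\ref{lem:pos}; the base cases $n=1$ (where the one remaining crossing already closes to an unknot, saving one surgery) and $n=2$ then account for $-p(n)$. Two further cautions. Only \emph{topological} unknottedness of the final term is required, so your appeal to Legendrian isotopy to kill vertical regions is stronger than needed; but if you run the argument inductively you do need Legendrian control of the intermediate fronts, because the front produced by the surgeries is not the standard front of the truncated link, and the crossing-removal move of Figure~\ref{fig:positive-cross} is not free: it requires the left cusps of the crossing being removed to be accessible (nested or stacked). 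Verifying those cusp configurations region by region is the actual content of the upper bound, and it is what the paper's figure carries; its choice to spend one surgery in each vertical segment, rather than none, is exactly what lets it avoid both the induction and the isotopy claims you would still have to justify.
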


\begin{proof}   This will be proved using the lower bound on $\lso(\Lambda)$ provided by
Lemma~\ref{lem:tb-lb}, and explicit calculations.  

We will first show that 
$$r(\Lambda(c_n, \dots, c_2, c_1)) = 0, \text{ and }
tb(\Lambda(c_n, \dots, c_2, c_1)) =  \sum_{i \text{ odd}} c_{i}- p(n) - 1.$$
 It is easy to verify that when all the crossings are positive, the up and down cusps cancel in pairs and thus
the rotation number vanishes.  To calculate $tb(\Lambda(c_n, \dots, c_1))$, notice that when $n$ is odd 
 the number of right cusps 
is $2$ more than  the number of vertical crossings, $\sum_{i \text{ even}} c_i$, while when $n$ is even, the number of rights cusps
is $1$ more than the number of vertical crossings.  Thus:
$$tb(\Lambda(c_n, \dots, c_2, c_1)) = \sum_{i=1}^n c_i - \left( \sum_{i \text{ even}} c_i + 1  + p(n) \right) = \sum_{i \text{ odd}} c_i - 1 - p(n). $$
 Thus, by   Lemma~\ref{lem:tb-lb}, 
$$ \sum_{i \text{ odd}} c_{i}  - p(n) \leq \lso(\Lambda(c_n, \dots, c_2, c_1)).$$

In fact, it is possible to unknot $\Lambda(c_n, \dots, c_2, c_1)$  by doing
$c_i - 1$ surgeries in each horizontal component   
and $1$ surgery in each vertical segment;  Figure~\ref{fig:pos-rat-unknot} illustrates some examples of this.
When $n=1$, there are no vertical segments; for other odd $n$,  the number of vertical components is one less than the number
of horizontal components, and when $n$ is even,
 the number of vertical components agrees with the number
of horizontal components.   Thus
$$\lso(\Lambda(c_n, \dots, c_1)) \leq \sum_{i \text{ odd}} c_i  - p(n),$$ and  the desired
calculation of $\lso(\Lambda(c_n, \dots, c_1))$ follows.   
 \begin{figure}
 \centerline{\includegraphics[height=1.4in]{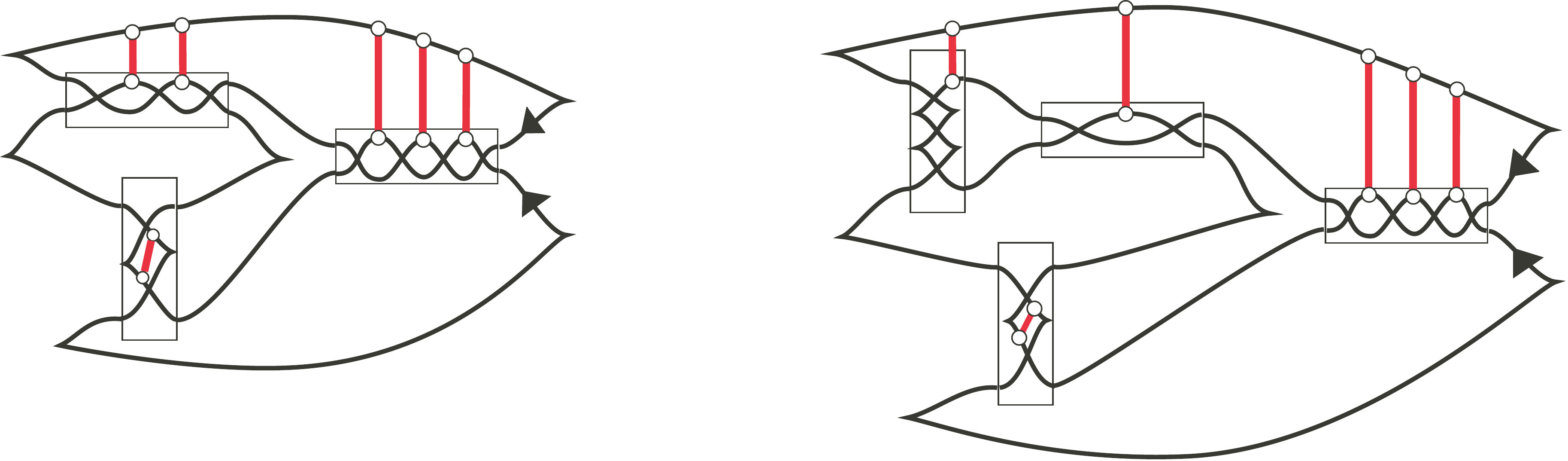}}
  \caption{Two positive, Legendrian rational knots of odd and even lengths.  In both cases, it is possible to
  unknot by doing $c_i - 1$ surgeries in each horizontal segment ($i$ odd) and $1$ surgery in
  each vertical segment.}
  \label{fig:pos-rat-unknot}
\end{figure}

\end{proof}

\begin{rem} \label{rem:g4-rats}  In the above proof,  $\sun(\Lambda(c_n, \dots, c_1))$ is obtained by realizing
the lower bound given by the classical Legendrian invariants.  Thus,  by
 Corollary~\ref{cor:realize-g4}, we see that when $\Lambda(c_n, \dots, c_1)$ has an underlying
 topological type of the knot $K_\Lambda$,
 $\sun(\Lambda(c_n, \dots, c_1)) = 2 g_4(K_\Lambda)$.   Moreover, when $\Lambda(c_n, \dots, c_1)$ has an underlying
 topological type of a $2$-component link $L_\Lambda$, we can compare $\sun(\Lambda(c_n, \dots, c_1))$
 to the smooth $4$-ball genus of $L_\Lambda$ using Nakamura's formula (see Remark~\ref{rem:g4-torus-links})  for the smooth $4$-ball
 genus of a positive link .
 When $n$ is odd, the number of Seifert circles is $s(D) = 2 + \sum_{i \text{ even}} c_i$, while when
 $n$ is even, $s(D) = 1 + \sum_{i \text{ even}} c_i$.  Thus we find that for a $2$-component, positive, Legendrian rational link $\Lambda(c_n, \dots, c_1)$,
 $$2g_4(L_\Lambda) + 1 = c(D) - s(D) + 1 = \sum_{i \text{ odd}} c_i - p(n) = \sigma_0(\Lambda(c_n, \dots, c_1)).$$
\end{rem}

 \end{subsection}

 \section{The Surgery Unknotting Number for Small Crossing Knots}

 Given the calculations of the previous section, it is natural to 
ask Queston~\ref{ques:sun-g4} in the Introduction.
 To investigate the knot portion of this question, we examined Legendrian representatives of low-crossing knots.      
 There is not  a Legendrian classification of all these knot types, but a conjectured
  classification of these knot types can be found in \cite{chongchitmate-ng}.  
 In the following, we  prove Proposition~\ref{prop:small-cross}, which says that 
 the surgery unknotting number  of the Legendrian agrees with twice the smooth $4$-ball genus
 of the underlying smooth knot for all  Legendrians that are topologically
 a non-slice knot with crossing number at most $7$.

 In Section~\ref{sec:families}, Proposition~\ref{prop:small-cross} is verified for all torus and twist knots.
 The only non-torus and non-twist
 knots with $7$ or fewer crossings are $6_2, m(6_2)$, $6_3 = m(6_3)$, $7_3$, $m(7_3)$, $7_4$, $m(7_4)$, $7_5$, $m(7_5)$,
 $7_6$, $m(7_6)$, $7_7$, and $m(7_7)$.    The needed calculations fall into three categories as described below.
  
 \begin{ex}  For the smooth knots $7_3, m(7_3), 7_4, m(7_4), 7_5$, and $m(7_5)$, the genus, $g_3$, agrees with
 the smooth $4$-ball genus $g_4$.  \footnote{This is also the situation for the torus and non-slice twist knots studied in Section~\ref{sec:families}.}
 In general, we find that for a Legendrian $\Lambda$ 
 whose underlying knot type $K_\Lambda$ 
 satisfied $g_3(K_\Lambda) = g_4(K_\Lambda)$,
  it is fairly straight forward to show that 
 $\sigma_0(\Lambda) = 2 g_4 (L_\Lambda)$.   For example, Figure~\ref{fig:g3-g4} shows all
 conjectured  representatives  of $7_3$, $m(7_3)$, $7_4$, $m(7_4)$, $7_5$, and $m(7_5)$ with
 maximal Thurston-Bennequin invariant (after perhaps selecting alternate orientations and/or
 performing the Legendrian mirror operation).  For each of these with maximal Thurston-Bennequin invariant,
 it is possible to unknot with  $2 g_4 (K_\Lambda)$ surgeries as indicated.  
  \begin{figure}
  \centerline{\includegraphics[height=2.4in]{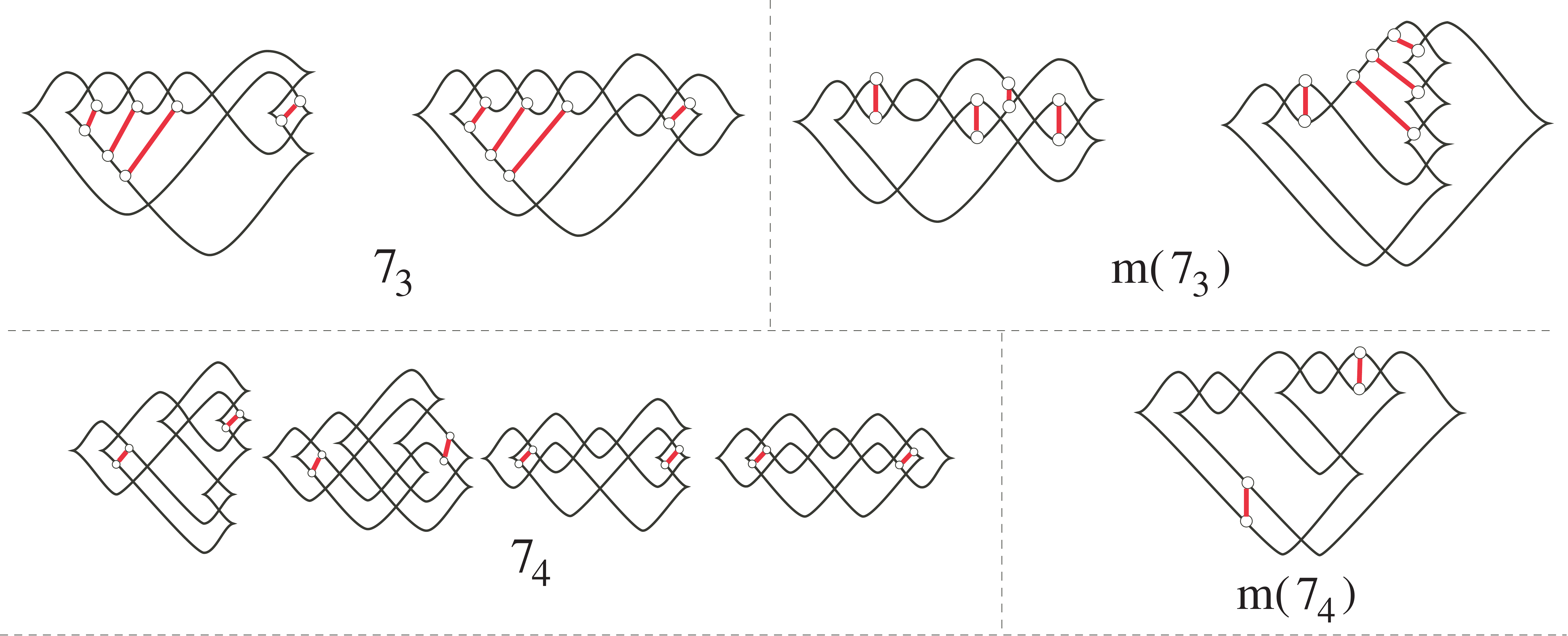}}
  \caption{Front projections representing all  conjectured Legendrian representatives of  $7_3$, $m(7_3)$, $7_4$, and $m(7_4)$ with maximal Thurston-Bennequin invariant.  For all of these knot types, $g_3(K_\Lambda) = g_4(K_\Lambda)$; the indicated surgery points realize $\lso(\Lambda) = 2 g_4(K_\Lambda)$.}
  \label{fig:g3-g4}
\end{figure} 
 \end{ex}

   In general, we found that for a Legendrian $\Lambda$ 
 whose underlying knot type $K_\Lambda$ 
 satisfied $g_4(K_\Lambda) < g_3(K_\Lambda)$, it is more difficult to calculate
 $\sigma_0(\Lambda)$.   To do calculations for our remaining cases, we made use of the well-known fact that the unknotting number of a knot, $u(K)$, gives
 an upper bound to the smooth $4$-ball genus:
 \begin{equation} \label{ineq:g4-u}
 g_4(K) \leq u(K).
 \end{equation}
 Figure~\ref{fig:g4-u} demonstrates two topological surgeries that produce a crossing change; an argument as in the proof of
 Lemma~\ref{lem:g4-lb} then proves Inequaltiy~\ref{ineq:g4-u}.  Notice that the topological Reidemeister moves used in the
 equivalence are not Legendrian Reidemeister moves.  However, near a negative crossing, it is possible to
 ``Legendrify" this construction:
  
    \begin{figure}
 \centerline{\includegraphics[height=.5in]{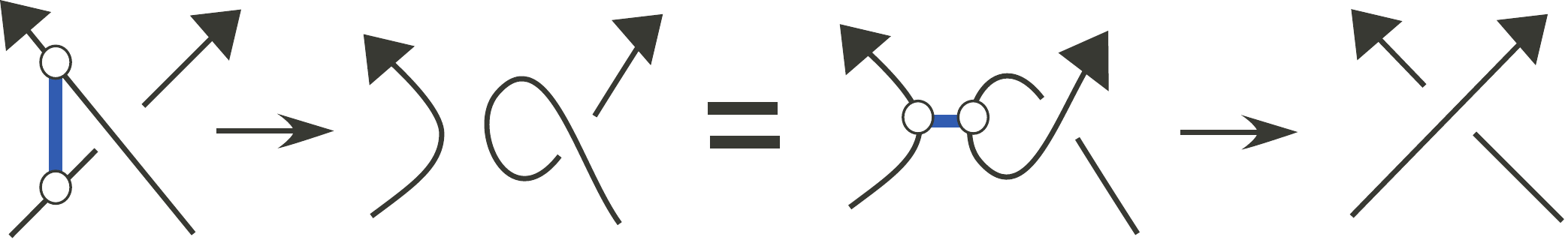}}
  \caption{ A sequence of two topological surgeries in a neighborhood of a negative crossing that 
  toplogically change the crossing. An analogous picture shows that a positive crossing can be changed into
  a negative crossing by two topological surgeries.}
  \label{fig:g4-u}
\end{figure}

   \begin{lem} \label{lem:neg-cross-unknot}  If the Legendrian knot $\Lambda$ has a front projection  that can
   be  topologically unknotted by changing a negative crossing, then 
   $$\lso(\Lambda) \leq 2.$$
   \end{lem}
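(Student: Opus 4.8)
The plan is to exhibit an explicit oriented unknotting surgery string of length $2$ for $\Lambda$, which by Definition~\ref{defn:sun} immediately gives $\lso(\Lambda) \leq 2$. The guiding idea is the ``Legendrification'' hinted at just before the lemma: the two topological band surgeries of Figure~\ref{fig:g4-u} that realize a crossing change can be performed by genuine oriented Legendrian surgeries precisely when the crossing in question is negative.

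First I would fix, as permitted by hypothesis, a front projection of $\Lambda$ containing a negative crossing $c$ whose topological crossing change converts the diagram into a diagram of the unknot. The key local observation is that at a negative crossing the two strands point to opposite sides of the vertical line through the crossing point (the definition recalled in Section~\ref{sec:background}), so the two oppositely oriented strands emanating from $c$ are exactly in the configuration needed to host a basic, compatibly-oriented Legendrian $0$-tangle. I would place such a $0$-tangle just to one side of $c$ and perform the first oriented Legendrian surgery, replacing it by a basic $\infty$-tangle to obtain $\Lambda_1$; by Lemma~\ref{lem:parity+tb}, $\Lambda_1$ is a $2$-component Legendrian link.

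Next, after a Legendrian isotopy that repositions the newly created cusps and strands (introducing no vertical tangents), I would locate a second basic, compatibly-oriented $0$-tangle and perform a second oriented Legendrian surgery to obtain $\Lambda_0$, which is again a Legendrian knot by the parity statement of Lemma~\ref{lem:parity+tb}. The crux is to check that the composite of these two local moves reproduces exactly the topological crossing change at $c$ of Figure~\ref{fig:g4-u}; equivalently, that the resulting front is equivalent, via the Legendrian Reidemeister moves, to the one obtained from the original by switching $c$ from negative to positive. Granting this, $\Lambda_0$ is topologically the unknot, so $(\Lambda = \Lambda_2, \Lambda_1, \Lambda_0)$ is an oriented unknotting surgery string of length $2$ and $\lso(\Lambda) \leq 2$.

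The main obstacle is the diagrammatic verification in the previous paragraph. One must confirm that at each stage the designated local tangles are genuinely \emph{basic} and \emph{compatibly oriented} (two oppositely oriented strands, no extraneous crossings, and precisely the prescribed cusps), that the orientations are respected throughout both replacements, and that the two $\infty$-tangle insertions assemble---after Legendrian Reidemeister moves---into exactly the intended sign change at $c$ rather than some other local modification of the front. This is best organized by drawing an explicit figure in the Legendrian front category, directly analogous to Figure~\ref{fig:g4-u}, and reading off the moves crossing-by-crossing.
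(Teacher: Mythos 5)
Your proposal is correct and follows essentially the same route as the paper: the paper's entire proof is the explicit front-projection figure (Figure~\ref{fig:neg-unknot}) showing two oriented Legendrian surgeries in a neighborhood of the negative crossing that effect the topological crossing change, which is precisely the ``Legendrified'' band-surgery picture you describe. Your key observation---that the opposite orientations of the strands at a negative crossing are exactly what make the compatibly-oriented $0$-tangle replacements possible---is the correct reason the argument works for negative crossings but not for arbitrary positive ones (cf.\ Lemma~\ref{lem:pos-cross-unknot}).
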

   
   \begin{proof}  Figure~\ref{fig:neg-unknot} demonstrates how two surgeries can locally produce a topological crossing
   change.  
   
    \begin{figure}
 \centerline{\includegraphics[height=1.4in]{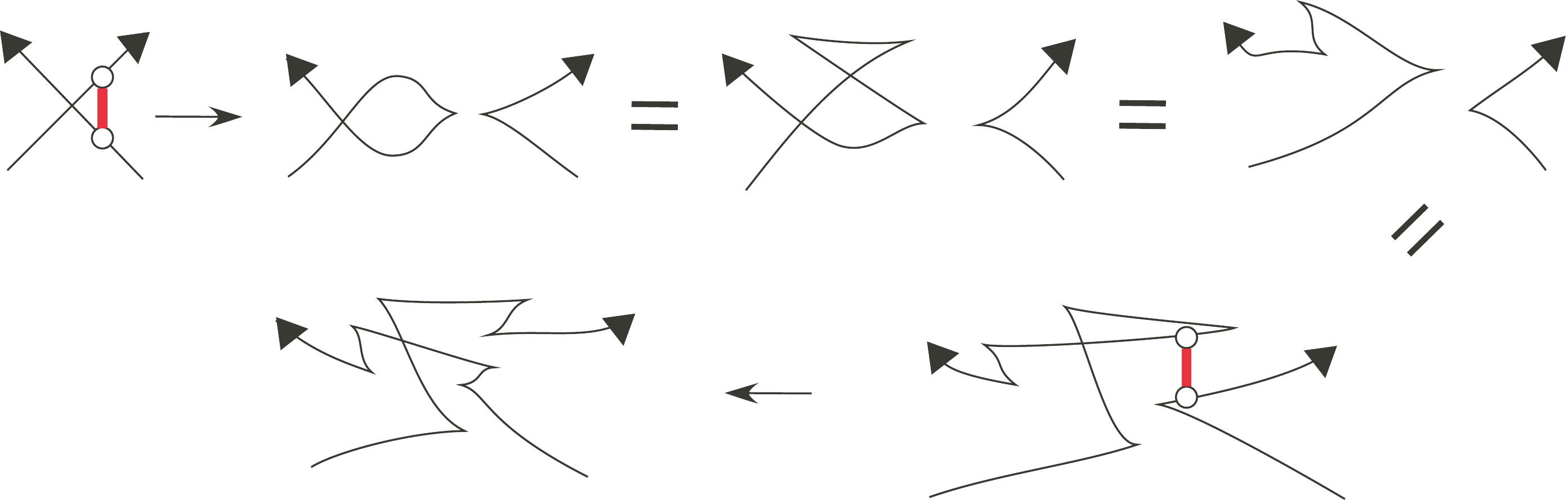}}
  \caption{ A sequence of two oriented surgeries in a neighborhood of a negative crossing that 
  toplogically change the crossing.}
  \label{fig:neg-unknot}
\end{figure}

\end{proof}

 \begin{ex}   Using Lemma~\ref{lem:neg-cross-unknot}, it is possible to show that for any conjectured Legendrian representative $\Lambda$
 of $6_2$, $6_3$, $7_6$, or $7_7$, $\sigma_0(\Lambda) = 2 g_4(L_\Lambda)$.  Figure~\ref{fig:neg-pt-unknot} shows the conjectured
 Legendrian representatives of these knot types with maximal Thurston-Bennequin invariant
 (after perhaps selecting alternate orientations and/or
 performing a mirror operation) and the negative crossing that when topologically changed  produces an unknot.
  \begin{figure}
 \centerline{\includegraphics[height=2in]{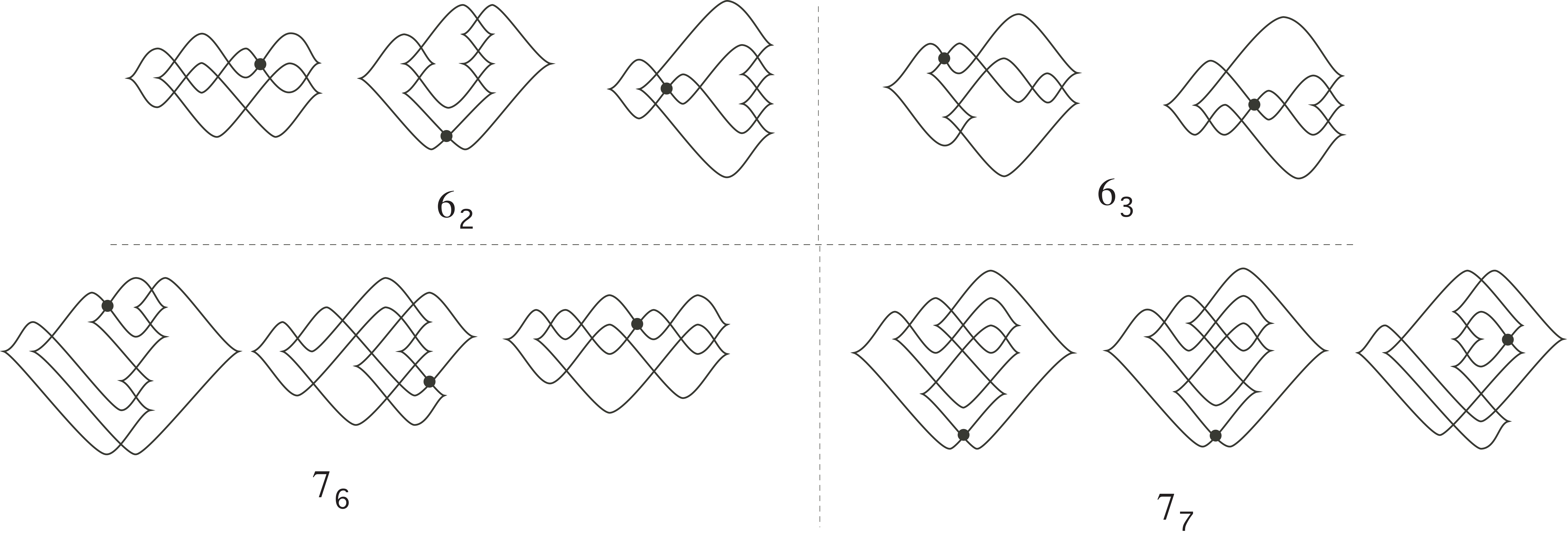}}
  \caption{ Front projections representing all conjectured Legendrian representatives of  $6_2$, $6_3$, $7_6$, and $7_7$ with maximal Thurston-Bennequin invariant.  These projections  can be topologically unknotted at the indicated negative crossing.}
  \label{fig:neg-pt-unknot}
\end{figure} 
 
 \end{ex}

   We were not able to find front projections of the conjectured maximal Thurston-Bennequin representatives
   of $m(6_2)$, $m(7_6)$, or $m(7_7)$ that could be topologically unknotted by changing a negative crossing;
   in fact, by  \cite{signed-unknot},  it is {\it not} possible to do this even in the smooth setting.
   Luckily, sometimes we can topologically change a positive crossing when it has a special form.

   \begin{defn}  A positive crossing is of {\bf S form}, {\bf Z form}, or {\bf hooked-X form} if it takes the form 
   as shown  in Figure~\ref{fig:pos-S-Z}.
    \end{defn}
   
     \begin{figure}
 \centerline{\includegraphics[height=.8in]{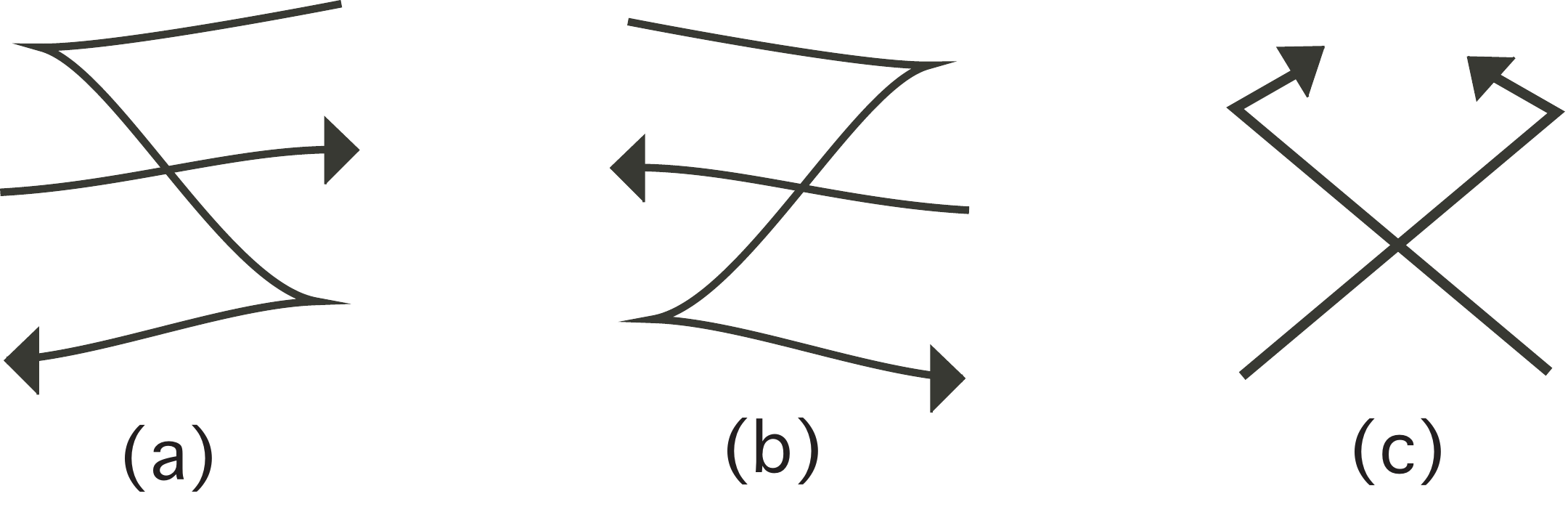}}
  \caption{A positive crossing of (a)  S form, (b) Z form, and (c) Hooked-X form.  Reversing the orientations
  on both strands keeps the respective forms.  Also reflecting the planar figure in (c) about a horizontal line produces
  another Hooked-X form.}
  \label{fig:pos-S-Z}
\end{figure}

   \begin{figure}
 \centerline{\includegraphics[width=3in]{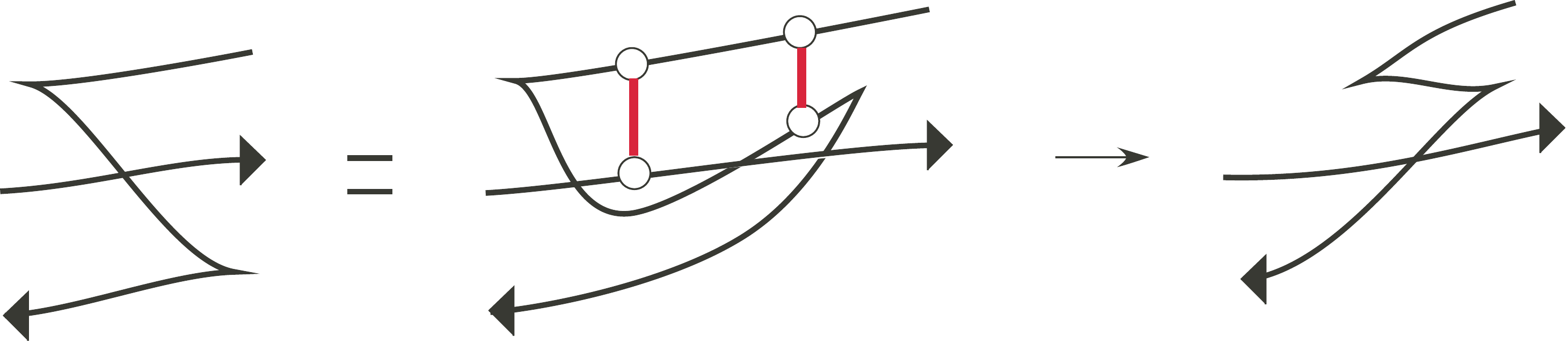}}
  \caption{ A positive crossing of S form can be transformed into a negative crossing
  with $2$ surgeries.  Similarly, a positive crossing of $Z$ form can be transformed into a negative
  crossing with $2$ surgeries.}
  \label{fig:pos-cross-unknot}
\end{figure}

 \begin{figure}
 \centerline{\includegraphics[width=5in]{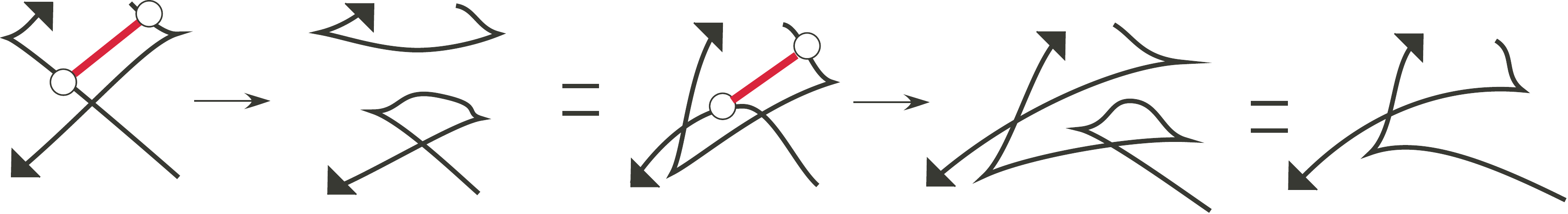}}
  \caption{A positive crossing of hooked-X form can be transformed into a negative crossing with
  $2$ Legendrian surgeries. }
  \label{fig:hook-X}
\end{figure}

   \begin{lem}\label{lem:pos-cross-unknot} If $\Lambda$ is a non-trivial Legendrian knot that has a projection that can be topologically unknotted by
   changing a positive crossing in  S, Z, or hooked-X form, then 
   $$\lso(\Lambda) \leq 2.$$ 
   \end{lem}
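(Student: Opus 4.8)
The plan is to follow the proof of Lemma~\ref{lem:neg-cross-unknot} and reduce the statement to a single \emph{local} claim: a positive crossing presented in S, Z, or hooked-X form can be converted into a negative crossing by exactly two oriented, Legendrian surgeries supported in a small disk about the crossing, with the projection held fixed outside that disk. Granting this, suppose the front projection of $\Lambda$ is topologically unknotted by switching a positive crossing that occurs in one of these three forms. Performing the two surgeries at that site replaces the positive crossing by a negative one; since in a Legendrian front the strand of lesser slope is always the overstrand, this local replacement reverses the over/under data at the crossing and hence realizes the topological crossing change, while leaving the rest of the diagram unchanged. The output is therefore a Legendrian link that is topologically the crossing-changed knot, which by hypothesis is an unknot. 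In the language of Definition~\ref{defn:strings} this is an oriented, unknotting surgery string $(\Lambda_2,\Lambda_1,\Lambda_0)$ of length $2$ with $\Lambda_2=\Lambda$, so $\lso(\Lambda)\le 2$ (and in fact $\lso(\Lambda)=2$ by Remark~\ref{rem:basics}, since $\Lambda$ is non-trivial).

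It remains to establish the local claim for each of the three forms. For the S and Z forms, the two surgeries are exhibited in Figure~\ref{fig:pos-cross-unknot}: one locates a basic, compatibly-oriented Legendrian $0$-tangle adjacent to the crossing (the S/Z shape guarantees that two oppositely oriented, cusp- and crossing-free strands are available there), replaces it by a basic $\infty$-tangle as in Figure~\ref{fig:0-surgery}, and then repeats a second such surgery. A sequence of Legendrian Reidemeister moves cancels the cusps introduced by the two surgeries and leaves a negative crossing in place of the original positive one. The hooked-X form is handled the same way in Figure~\ref{fig:hook-X}, where the extra cusped ``hook'' supplies the compatibly-oriented $0$-tangle needed for the first surgery; again, two surgeries followed by Legendrian Reidemeister moves flip the crossing.

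The essential point, and the step I expect to be the main obstacle, is precisely \emph{why} the three special forms are required. For a negative crossing the analogous local move is immediate (Figure~\ref{fig:neg-unknot}), but for a generic positive crossing no such two-surgery move exists, because the cusp and orientation constraints of a Legendrian front obstruct finding a compatibly-oriented $0$-tangle whose $\infty$-replacement, after isotopy, switches the crossing. The S, Z, and hooked-X configurations are exactly the local patterns in which such a $0$-tangle can be located and in which the subsequent Legendrian Reidemeister moves close up correctly. Verifying this move-by-move — checking at each stage that the surgered $0$-tangles are genuinely compatibly oriented and that the resulting front reduces by Legendrian moves to the crossing-switched diagram, especially in the more intricate hooked-X case of Figure~\ref{fig:hook-X} — is where the real work lies.
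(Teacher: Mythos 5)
Your proposal matches the paper's argument: the paper gives no written proof of this lemma at all---its entire justification is Figures~\ref{fig:pos-cross-unknot} and~\ref{fig:hook-X}, which exhibit the two local surgeries (followed by Legendrian isotopy) converting a positive crossing in S, Z, or hooked-X form into a negative crossing---and your reduction to exactly that local move, together with the observation that the surgered link is topologically the crossing-changed knot and hence an unknot, is precisely the intended reasoning. The surrounding bookkeeping you supply (the length-$2$ unknotting string from Definition~\ref{defn:strings}, and equality $\lso(\Lambda)=2$ via Remark~\ref{rem:basics}) is correct and, if anything, more explicit than what the paper records.
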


   \begin{ex}  Using Lemma~\ref{lem:pos-cross-unknot}, it is possible to show that for any conjectured Legendrian representative $\Lambda$
 of $m(6_2)$, $m(7_6)$, or $m(7_7)$, $\sigma_0(\Lambda) = 2 g_4(K_\Lambda)$.  Figure~\ref{fig:pos-pt-unknot} shows the conjectured
 Legendrian representatives of these knot types with maximal Thurston-Bennequin invariant (after perhaps selecting alternate orientations and/or
 performing a mirror operation).  These projections differ from those in \cite{chongchitmate-ng} by Legendrian Reidemeister moves of type II and III.
 The black dot indicates a positive
  crossing that when topologically changed  produces an unknot.

  \begin{figure}
 \centerline{\includegraphics[width=5in]{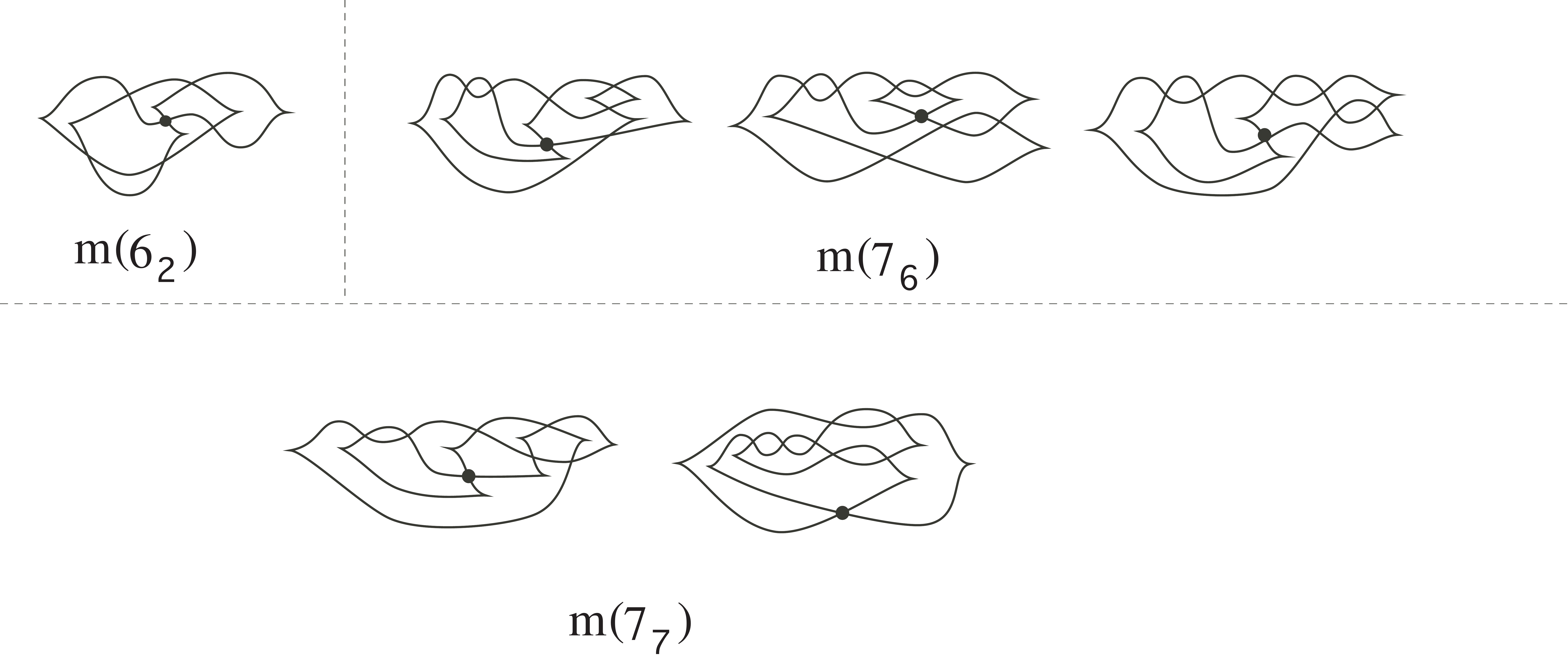}}
  \caption{Front projections representing all conjectured Legendrian representatives of $m(6_2)$, $m(7_6)$ and $m(7_7)$ with maximal Thurston-Bennequin
  invariant.  
  Each of these can be topologically unknotted by changing the indicated positive crossing in S or Hooked-X form. }
  \label{fig:pos-pt-unknot}
\end{figure}

\end{ex}

The proofs of Lemmas~\ref{lem:neg-cross-unknot} and ~\ref{lem:pos-cross-unknot} in fact show 
that if the Legendrian knot $\Lambda$ has a front projection that can
   be  topologically unknotted by changing $\nu$ negative crossings  and $\rho$  crossings
   in S, Z, or hooked-X form, then 
   $\lso(\Lambda) \leq 2\nu + 2\rho.$
  However, for our calculations we did not need this more general form.  
 
 \bibliographystyle{plain}
 \bibliography{main-L}

\begin{thebibliography}{10}

\bibitem{adams}
C.~Adams.
\newblock {\em The knot book: an elementary introduction to the mathematical
  theory of knots}.
\newblock American Mathematical Society, Providence, RI, 2004.

\bibitem{akbulut-matveyev}
S.~Akbulut and R.~Matveyev.
\newblock Exotic structures and adjunction inequality.
\newblock {\em Turkish Journal of Math}, 21:47--53, 1997.

\bibitem{bourgeois-sabloff-traynor}
F.~Bourgeois, J.~Sabloff, and L.~Traynor.
\newblock Constructions of {L}agrangian cobordisms between {L}egendrian
  submanifolds.
\newblock In Preparation.

\bibitem{casson-gordon}
A.~Casson and C.~Gordon.
\newblock Cobordism of classical knots.
\newblock In {\em A la Recherche de la Topologie Perdue}, volume~62 of {\em
  Progress in Mathematics}, pages 181--197. Birk\"auser, Boston, 1986.

\bibitem{knot-info}
J.C. Cha and C.~Livingston.
\newblock Knotinfo: Table of knot invariants, 2012.
\newblock http://www.indiana.edu/~knotinfo.

\bibitem{chantraine}
B.~Chantraine.
\newblock On {L}agrangian concordance of {L}egendrian knots.
\newblock {\em Algebr. Geom. Topol.}, 10:63--85, 2010.

\bibitem{chongchitmate-ng}
W.~Chongchitmate and L.~Ng.
\newblock An atlas of {L}egendrian knots.
\newblock {\em Exp. Math.}, 2012.
\newblock (to appear).

\bibitem{conway}
J.~Conway.
\newblock An enumeration of knots and links, and some of their algebraic
  properties.
\newblock In {\em Proc. Conf. Oxford 1967}, page 329Ð358, 1970.

\bibitem{dalton}
J.~Dalton.
\newblock {\em Legendrian Torus Links}.
\newblock PhD thesis, Bryn Mawr College, 2008.

\bibitem{eliashberg-fraser}
Y.~Eliashberg and M.~Fraser.
\newblock Topologically trivial {L}egendrian knots.
\newblock {\em J. Symplectic Geom.}, 7(2):77--127, 2009.

\bibitem{etnyre:knot-survey}
J.~Etnyre.
\newblock Legendrian and transversal knots.
\newblock In {\em Handbook of knot theory}, pages 105--185. Elsevier B. V.,
  Amsterdam, 2005.

\bibitem{etnyre-honda:knots}
J.~Etnyre and K.~Honda.
\newblock Knots and contact geometry {I}: Torus knots and the figure eight
  knot.
\newblock {\em J. Symplectic Geom.}, 1(1):63--120, 2001.

\bibitem{etnyre-ng-vertesi}
J.~Etnyre, L.~Ng, and V.~V\'ertesi.
\newblock Legendrian and transverse twist knots.
\newblock {\em J. Eur. Math. Soc.}, 2012.
\newblock (to appear).

\bibitem{kronheimer-mrowka}
P.~Kronheimer and T.~Mrowka.
\newblock Gauge theory for embedded surfaces, i.
\newblock {\em Topology}, 32(4):773--826, 1993.

\bibitem{lisca-matic}
P.~Lisca and G.~Mati{\'c}.
\newblock Stein $4$-manifolds with boundary and contact structures.
\newblock {\em Topology Appl.}, 88:55--66, 1998.

\bibitem{livingston-survey}
C.~Livingston.
\newblock A survey of classical knot concordance.
\newblock In {\em Handbook of knot theory}, pages 105--185. Elsevier B. V.,
  Amsterdam, 2005.

\bibitem{nakamura}
T.~Nakamura.
\newblock Four-genus and unknotting number of positive knots and links.
\newblock {\em Osaka J. Math}, 37:441--451, 2000.

\bibitem{rudolph}
L.~Rudolph.
\newblock An obstruction to sliceness via contact geometry and ``classical''
  gauge theory.
\newblock {\em Invent. Math}, 119:155--163, 1995.

\bibitem{schneider}
G.~Schneider.
\newblock {\em On the classification of Legendrian rational tangles via
  characteristic foliations of compressing discs}.
\newblock PhD thesis, State University of New York at Buffalo, 2011.

\bibitem{signed-unknot}
C.~Soteros, K.~Ishihara, K~Shimokawa, M.~Szafron, and M.~Vazquez.
\newblock Signed unknotting number and knot chirality discrimination via strand
  passage.
\newblock {\em Prog. Theor. Phys.}, Supplement(191):78--95, 2011.

\bibitem{traynor:strat}
L.~Traynor.
\newblock A legendrian stratification of rational tangles.
\newblock {\em Journal of Knot Theory and Its Ramifications}, 7(5):659--700,
  1998.

\end{thebibliography}

 \end{document}